\theoremstyle{plain}
\newtheorem{theorem}{\textbf{Theorem}}
\newtheorem{proposition}[theorem]{\textbf{Proposition}}
\newtheorem{claim}[theorem]{\textbf{Claim}}
\newtheorem{lemma}[theorem]{\textbf{Lemma}}
\newtheorem{conjecture}[theorem]{\textbf{Conjecture}}
\newtheorem{property}[theorem]{\textbf{Property}}
\newtheorem{corollary}[theorem]{\textbf{Corollary}}
\newtheorem*{proposition*}{\textbf{Proposition}}
\newtheorem*{theorem*}{\textbf{Theorem}}
\newcommand{\ZZ}{\ensuremath{\mathbb{Z}}}
\newcommand{\NN}{\ensuremath{\mathbb{N}}}
\newcommand{\RR}{\ensuremath{\mathbb{R}}}
\newcommand{\QQ}{\ensuremath{\mathbb{Q}}}
\newcommand{\setw}{\ensuremath{\{a,b\}^\star}}
\newcommand{\bw}{\ensuremath{\mathbf{w}}}
\newcommand{\bp}{\ensuremath{\mathbf{p}}}
\newcommand{\bv}{{\ensuremath{\mathbf{v}}}}
\newcommand{\bu}{{\ensuremath{\mathbf{u}}}}
\newcommand{\bz}{{\ensuremath{\mathbf{z}}}}
\newcommand{\beps}{\ensuremath{\boldsymbol{\varepsilon}}}
\newcommand{\defeq}{\overset{\mathrm{def}}{=}}
\newcommand{\chris}[1]{\ensuremath{\mathfrak{c}_{#1}}}
\newcommand{\bel}{\ensuremath{\mathrm{Bel}}}
\newcommand{\bord}{\ensuremath{\mathrm{Hull}}}
\newcommand{\hull}{\ensuremath{\mathrm{Hull}}}
\renewcommand{\flat}{\ensuremath{\mathrm{Flat}}}
\newcommand{\lift}{\ensuremath{\mathrm{Lift}}}
\newcommand{\Fibo}{\ensuremath{\mathrm{F}}}
\newcommand{\Pell}{\ensuremath{\mathrm{P}}}
\DeclareMathOperator{\mvalue}{\ensuremath{m-\textrm{value}}}
\title{On the Markov numbers: fixed numerator, denominator, and sum conjectures}
\author[1]{Clément Lagisquet}
\author[2]{Edita Pelantová}
\author[1]{Sébastien Tavenas}
\author[1]{Laurent Vuillon}
\affil[1]{LAMA, Université Savoie Mont-Blanc, CNRS}
\affil[2]{Faculty of Nuclear Sciences and Physical Engineering\\ Czech Technical University in Prague}
\begin{document}
\maketitle 

\begin{abstract}
	The Markov numbers are the positive integer solutions of the Diophantine equation \(x^2 + y^2 + z^2 = 3xyz\). Already in 1880, Markov showed that all these solutions could be generated along a binary tree. So it became quite usual (and useful) to index the Markov numbers by the rationals from \([0,1]\) which stand at the same place in the Stern–Brocot binary tree. 
	The Frobenius' conjecture claims that each Markov number appears at most once in the tree.
	
	In particular, if the conjecture is true, the order of Markov numbers would establish a new strict order on the rationals. Aigner suggested three conjectures to better understand this order. The first one has already been solved for a few months. We prove that the other two conjectures are also true.
	
	Along the way, we generalize Markov numbers to any couple \((p,q)\in \NN^2\) (not only when they are relatively primes) and conjecture that the unicity is still true as soon as \(p\leq q\). Finally, we show that the three conjectures are in fact true for this superset.
\end{abstract}

\subsection*{Acknowledgement:}
The research received funding from the Ministry of Education, Youth and Sports of the Czech Republic through the project \(\text{n}^{\circ}\) CZ.02.1.01/0.0/0.0/16\_019/0000778 and from the project BIRCA from the IDEX Université Grenoble Alpes.

\section{Introduction}

Markov numbers have been introduced in~\cite{Ma79, Mar80} to describe minimal values of quadratic forms on integer points, thus linking this question to the topic of Diophantine equations. Since then, connections with this sequence of numbers have emerged in many areas.
The book of Aigner \cite{Aig15} about the 100 years of the Frobenius' conjecture on Markov numbers  is based on many techniques coming from number theory \cite{Bo07}, hyperbolic geometry ~\cite{Ser85, Co55}, matching theory \cite{Propp20} and combinatorics on words \cite{RV16} and  for each technique a catalogue of interesting conjectures are stated.

The Frobenius' conjecture~\cite{Fro13} is linked with the solutions of the Diophantine equation $x^2+y^2+z^2=3xyz$ where each solution with non negative integers is called a Markov triple. In fact, except $(1,1,1)$ and $(1,2,1)$ all other Markov triples contain pairwise distinct integers. In addition, if a Markov triple $(x,y,z)$ with pairwise distinct values satisfies the Diophantine equation and has  $y$ as a maximum of the triple, then this triple gives birth to two other solutions, which are $(x,3xy-z,y)$ and $(y,3yz-x,z).$
This implies that the triples with pairwise distinct values could be described by a binary tree called Markov tree. The Markov numbers are exactly the numbers which appear in these triples. In 1913, Frobenius conjectured that these numbers appear once and exactly once as the maximum of a triple.

It is usual to define the Farey tree to construct the set of all rational numbers between 0 and 1 that is all $p/q$ with $p$ and $q$ relatively prime positive integers. This tree is defined using the Farey rules: a node $(\frac{a}{b}, \frac{a+c}{b+d},\frac{c}{d})$ generates the two next triples $(\frac{a}{b},\frac{a+(a+c)}{b+(b+d)}, \frac{a+c}{b+d})$ and $(\frac{a+c}{b+d},\frac{(a+c)+c}{(b+d)+d},\frac{c}{d})$ (the tree with only the maximums of these triplets is also known as the Stern-Brocot tree).

In particular, we can index all Markov numbers by the Farey fraction which stands at the same place in the Stern-Brocot tree (see \cite{Aig15, BLRS09}), this correspondence would be one to one if the unicity conjecture was true. We refer to a Markov number by its associated rational in the Farey tree namely $m_\frac{p}{q}$ (following the notation in \cite{RS20}).

It would be very helpful to understand the growth of Markov numbers according to their indices.
In the book of Aigner, we find three nice conjectures. 
\begin{enumerate}
	\item (The fixed numerator conjecture) Let $p,q$ and $i$ in \(\NN\) such that \(i>0\), $p<q, \gcd(p,q)=1$ and $\gcd(p,q+i) = 1$ then $m_\frac{p}{q} <m_\frac{p}{q+i}.$ 
	\item (The fixed denominator conjecture) Let $p,q$ and $i$ in \(\NN\) such that \(i>0\), $p+i<q, \gcd(p,q)=1$ and $\gcd(p+i,q) = 1$ then $m_\frac{p}{q} <m_\frac{p+i}{q}$.
	\item (The fixed sum conjecture) Let $p,q$ and $i$ be positive integers such that $i<p<q, \gcd(p,q)=1$ and $\gcd(p-i,q+i)=1$ then $m_\frac{p}{q} <m_\frac{p-i}{q+i}$.
\end{enumerate}

The fixed numerator conjecture was proved in \cite{RS20} using cluster algebra and snake graphs. The proof is quite technical with many interesting tools on discrete paths on the $\NN^2$ grid. Nevertheless, one key concept is to use perfect matching on snake graphs (see \cite{Propp20, RS20}) and while the technique is fine this add too much concepts on the transformation of paths.

In this article, we prove directly the fixed numerator conjecture by using transformations on paths on the $\NN^2$ grid. More precisely, to any path in the \(\NN^2\) grid, we associate an integer value (we call it the \(m\)-value) by substituting any horizontal step by the matrix \(A \defeq \begin{pmatrix} 1 & 1 \\ 1 & 2 \end{pmatrix}\) and any vertical step by \(B \defeq \begin{pmatrix} 2 & 1 \\ 1 & 1 \end{pmatrix}\), the \(m\)-value is the top-right entry of the product of these matrices. We will show (Proposition~\ref{prop:cohnMarkov} and~\ref{prop:minimality}) that 
\begin{theorem}
	if  \(p<q\) are relatively prime, then
	\[m_{p/q} = \min_{\bw\textrm{path from }(0,0)\textrm{ to }(q,p)} (\mvalue(\bw)).
	\]
\end{theorem}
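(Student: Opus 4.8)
The theorem is exactly the combination of the two propositions announced above: Proposition~\ref{prop:cohnMarkov} will assert that the \emph{lower Christoffel path} $\bw_{p/q}$ from $(0,0)$ to $(q,p)$ satisfies $\mvalue(\bw_{p/q})=m_{p/q}$, and Proposition~\ref{prop:minimality} will assert that $\bw_{p/q}$ minimizes $\mvalue$ over all monotone lattice paths with the same endpoints. The first gives $\min\le m_{p/q}$ (the Christoffel path is one competitor) and the second gives $\min\ge m_{p/q}$, so together they yield the displayed identity. I sketch the two in turn.

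For Proposition~\ref{prop:cohnMarkov} I would run an induction along the Stern--Brocot tree carried on \emph{triples} of matrices. Write $M(\bw)$ for the product of $A$'s and $B$'s coding the word $\bw$, so that $M$ is a monoid morphism and $\mvalue(\bw)=\bigl(M(\bw)\bigr)_{12}$. To the node $\frac pq$, with Stern--Brocot ancestors $\frac{p_1}{q_1}\prec\frac{p_2}{q_2}$ and Christoffel word $\bw_{p/q}=\bw_{p_1/q_1}\,\bw_{p_2/q_2}$, attach the triple $(X,M,Y)\defeq\bigl(M(\bw_{p_1/q_1}),\,M(\bw_{p/q}),\,M(\bw_{p_2/q_2})\bigr)$, so $M=XY$. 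The invariant to propagate is: $\det X=\det Y=1$, $\operatorname{tr}X=3m_{p_1/q_1}$, $\operatorname{tr}Y=3m_{p_2/q_2}$, $X_{12}=m_{p_1/q_1}$ and $Y_{12}=m_{p_2/q_2}$; it holds at the root triple $(A,\,A^2B,\,AB)$ by a direct check. Passing to the left child replaces $(X,M,Y)$ by $(X,XM,M)$, and Cayley--Hamilton ($X+X^{-1}=(\operatorname{tr}X)I$, valid since $\det X=1$) produces the key cancellation $XM=(\operatorname{tr}X)M-X^{-1}(XY)=3m_{p_1/q_1}\,M-Y$; hence $(XM)_{12}=3m_{p_1/q_1}m_{p/q}-m_{p_2/q_2}$ and $\operatorname{tr}(XM)=3\bigl(3m_{p_1/q_1}m_{p/q}-m_{p_2/q_2}\bigr)$. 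Since the Markov recurrence reads $m_{(p_1+p)/(q_1+q)}=3m_{p_1/q_1}m_{p/q}-m_{p_2/q_2}$ (the difference fraction of the two new ancestors being $\frac{p_2}{q_2}$), the invariant is restored; the right child is symmetric, using $MY^{-1}=X$. Evaluating the invariant at an arbitrary node gives $\bigl(M(\bw_{p/q})\bigr)_{12}=m_{p/q}$. The only non-calculational ingredients are the factorization of Christoffel words along the tree and the Markov recurrence in this form; note the ``difference fraction'' never appears as an actual matrix, only as a Markov number, so no matrix with negative entries is ever manipulated.

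For Proposition~\ref{prop:minimality} I would argue by a descent through elementary \emph{corner flips} $ab\leftrightarrow ba$, which connect any two monotone paths from $(0,0)$ to $(q,p)$. Using $AB-BA=\begin{pmatrix}0&-2\\2&0\end{pmatrix}$ one gets the clean local formula
\[
  \mvalue(\bu\,ab\,\bv)-\mvalue(\bu\,ba\,\bv)=2\bigl(\beta\gamma-\alpha\delta\bigr),
\]
where $(\alpha,\beta)$ is the top row of $M(\bu)$ and $(\gamma,\delta)^{\top}$ the right column of $M(\bv)$ — all entries nonnegative, and positive once $\bu$, resp.\ $\bv$, is nonempty. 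This quantity is not sign-definite (consistent with the minimizer $\bw_{p/q}$ being the \emph{balanced} path, strictly between $a^qb^p$ and $b^pa^q$), so a naive sorting does not work. The plan is instead: since $\mvalue$ strictly decreases at each step, a sequence of strictly decreasing flips must terminate; and one shows it can terminate only at $\bw_{p/q}$, because $\bw_{p/q}$ is the \emph{unique} monotone path admitting no strictly decreasing flip. Hence every path has $\mvalue\ge\mvalue(\bw_{p/q})=m_{p/q}$.

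The crux is therefore the characterization of $\bw_{p/q}$ as the unique flip-minimal path. Concretely this means: at every valley ($ab$) corner of $\bw_{p/q}$ one has $\beta\gamma\le\alpha\delta$, at every peak ($ba$) corner $\beta\gamma\ge\alpha\delta$, and every other monotone path violates one of these inequalities at some corner. I expect this to follow from a careful analysis of how the ratios $\beta/\alpha$ and $\delta/\gamma$ evolve as the cut point $\bu\mid\bv$ slides along a path, exploiting the explicit run-length (Christoffel) structure and an induction along the continued-fraction expansion of $p/q$; this is the step I anticipate as the main obstacle. The degenerate situations — $p=0$, or the flipped block at the very start or end of the word, where $\bu$ or $\bv$ is empty and the displayed difference collapses to $-2\alpha\delta<0$ — are immediate from the formula and force the extremal letters of any minimizer to agree with those of the Christoffel word.
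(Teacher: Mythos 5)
Your reduction of the theorem to Propositions~\ref{prop:cohnMarkov} and~\ref{prop:minimality} matches the paper's architecture, and your treatment of the first half is correct but takes a different route: the paper obtains Proposition~\ref{prop:cohnMarkov} by translating Theorem~7.6 of Aigner (Cohn words) through the morphism of Appendix~\ref{app:CohnW}, whereas you re-prove it from scratch by propagating the invariant $X_{12}=m$, $\operatorname{tr}X=3m$ down the Stern--Brocot tree, with Cayley--Hamilton giving $XM=3m_{p_1/q_1}M-Y$. This is sound (the root checks out, and both children are handled symmetrically), provided you import the standard factorization $\chris{(p_1+p_2)/(q_1+q_2)}=\chris{p_1/q_1}\,\chris{p_2/q_2}$ of Christoffel words along the tree; what it buys is independence from the Cohn-word machinery, at the cost of invoking that factorization.

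The second half, however, has a genuine gap. Your descent argument stands or falls on the claim you yourself flag as the main obstacle: that the Christoffel path is the \emph{unique} monotone path to $(q,p)$ admitting no strictly decreasing simple flip. Without that, a greedy decreasing-flip descent could a priori stall at some other locally flip-minimal path, and the inequality $\min\ge m_{p/q}$ does not follow; the local formula $2(\beta\gamma-\alpha\delta)$ by itself does not decide signs, as you observe. That uniqueness statement is true, but proving it is precisely the technical core of the paper: Lemma~\ref{lem:simpleFlip} turns the sign of a flip into a prefix comparison of the two half-words read away from the corner, and Lemmas~\ref{lem:flattening} and~\ref{lem:lifting} then exhibit, at any path different from the Christoffel word, a \emph{specific} strictly decreasing flip (at the corner whose algebraic vertical distance to the chord is lexicographically extremal), iterating it to flatten away the points above the line and then lift up the points below it; in the coprime case $\lift(\flat(\bw))=c(p,q)$, which is exactly your ``unique sink'' claim, obtained constructively rather than by classifying all local minima. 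Your proposed route via the evolution of the ratios $\beta/\alpha$ and $\delta/\gamma$ along the continued fraction of $p/q$ is not carried out and is of comparable difficulty to those two lemmas, so as written the lower bound is unproved. One small point in your favour: for coprime $(q,p)$ the equality case of a flip (point~4 of Lemma~\ref{lem:simpleFlip}) cannot occur, since it forces both coordinates of the endpoint to be even; hence every flip strictly changes the $m$-value --- but this alone does not exclude non-Christoffel local minima, so the crux remains.
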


Removing the relatively prime constraint, we easily extend the set of Markov numbers to all pairs $(q,p)$ of integers on the $\mathbb{N}^2$ grid: \(m_{p,q} = \min_{\bw\textrm{path from }(0,0)\textrm{ to }(q,p)} (\mvalue(\bw))\). Interestingly, this superset contains all Fibonacci and Pell numbers (and not only the odd-indexed ones). Then, we conjecture these numbers are still unique as soon as \(p \leq q\). This stronger unicity conjecture was verified by computation for the values $ 0 \leq p \leq q \leq 1000$.
Now the three previous conjectures on fixed parameters directly follow from our main result (a combination of Propositions~\ref{prop:FDC},~\ref{prop:FSC} and Corollary~\ref{prop:FNC}).
\begin{theorem}\label{thm:main}
		Let \(p<q \in \NN\). We have 
		\begin{enumerate}
			\item \(m_{p,q} < m_{p,q+1}\),
			\item \(m_{p,q} < m_{p+1,q}\),
			\item and \(m_{p+1,q} < m_{p,q+1}\).
		\end{enumerate}
\end{theorem}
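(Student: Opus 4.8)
The plan is to prove all three inequalities directly from the path description \(m_{p,q}=\min_{\bw}\mvalue(\bw)\), the minimum ranging over monotone lattice paths \(\bw\) from \((0,0)\) to \((q,p)\). Encode a path as a word in the letters \(A=\left(\begin{smallmatrix}1&1\\1&2\end{smallmatrix}\right)\) (a horizontal step) and \(B=\left(\begin{smallmatrix}2&1\\1&1\end{smallmatrix}\right)\) (a vertical step), let \(M(\bw)\) be the product of the letters of \(\bw\), so that \(\mvalue(\bw)=M(\bw)_{12}\); since \(A\) and \(B\) are strictly positive, \(M(\bu)\) is strictly positive for every nonempty subword \(\bu\), while the empty word gives the identity, whose diagonal entries are \(1\). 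In each case I will take an optimal path realizing the larger of the two Markov numbers, perform one local surgery to obtain a path realizing an upper bound for the smaller one, and check that the surgery strictly lowers \(\mvalue\). Because \(\bw=\bu\bv\) gives \(M(\bw)=M(\bu)M(\bv)\), deleting or flipping a single letter changes \(M(\bw)\) by a rank-one matrix sandwiched between \(M(\bu)\) and \(M(\bv)\), so the change in \(M(\bw)_{12}\) is a short bilinear form in the entries of \(M(\bu)\) and \(M(\bv)\).

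For (1), let \(\bw\) be optimal from \((0,0)\) to \((q+1,p)\) and write \(\bw=\bu\,A\,B^{k}\), where \(A\) is its last horizontal step and \(B^{k}\) (\(k\ge 0\)) collects the trailing vertical steps. Deleting that \(A\) gives a path \(\bu B^{k}\) to \((q,p)\), and writing \(M(\bu)=\left(\begin{smallmatrix}a&b\\c&d\end{smallmatrix}\right)\), \(B^{k}=\left(\begin{smallmatrix}e&f\\g&h\end{smallmatrix}\right)\), one gets \(\mvalue(\bw)-\mvalue(\bu B^{k})=bf+ah+bh\ge ah\ge 1>0\), since \(a\ge1\) and \(h\ge1\); hence \(m_{p,q}\le\mvalue(\bu B^{k})<m_{p,q+1}\). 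Statement (2) is the mirror image: start from an optimal path to \((q,p+1)\), delete its last vertical step, and the same computation with \(A\) and \(B\) interchanged yields an \(\mvalue\) drop of \(af+bf+ah>0\), so \(m_{p,q}<m_{p+1,q}\). (Equivalently, (2) reduces to (1) by reflecting the grid across the main diagonal, which conjugates each letter by \(\left(\begin{smallmatrix}0&1\\1&0\end{smallmatrix}\right)\).)

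Statement (3) is the crux. Let \(\bw\) be optimal from \((0,0)\) to \((q+1,p)\); it has \(q+1\) horizontal and \(p\) vertical steps, and \(p<q\) forces \((q+1)-p\ge2\). A balancing argument — if a vertical step separated every pair of horizontal steps we would get \(p\ge (q+1)-1\) — shows \(\bw\) contains a factor \(AA\); more precisely, being optimal it has the balanced ("Christoffel") structure, so it contains a maximal block \(A^{j}\) with \(j\ge2\). The surgery is to flip a single, carefully chosen horizontal step of \(\bw\) to a vertical one, producing a path to \((q,p+1)\): if \(\bw=\bu\,A\,\bv\) and we flip that \(A\), then \(\mvalue(\bw)-\mvalue(\bu B\bv)=M(\bu)_{12}\,M(\bv)_{22}-M(\bu)_{11}\,M(\bv)_{12}\). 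This difference is \emph{not} positive for an arbitrary horizontal step — flipping the first step, or one flanked by vertical steps, can raise \(\mvalue\) — so the whole point is the choice of step: flipping an appropriate \(A\) inside (or just after) a block \(A^{j}\) with \(j\ge2\), and exploiting the comparabilities between the entries of the prefix and suffix matrices imposed by the Christoffel structure of \(\bw\), makes the difference strictly positive. The hypothesis \(p<q\) is exactly what guarantees such a block exists, i.e.\ that a legal value-decreasing flip is available. Granting this, \(m_{p+1,q}\le\mvalue(\bu B\bv)<\mvalue(\bw)=m_{p,q+1}\).

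I expect the genuine difficulty to lie entirely in (3): isolating which horizontal step to flip and proving \(M(\bu)_{12}M(\bv)_{22}>M(\bu)_{11}M(\bv)_{12}\) for that choice, which forces one to use the fine structure of optimal (Christoffel) paths and continuant-type identities rather than mere positivity of \(A\) and \(B\); Parts (1) and (2), by contrast, are essentially one-line matrix computations. Once the theorem is established for the increment \(i=1\), the original fixed-numerator, fixed-denominator, and fixed-sum conjectures follow immediately by transitivity, chaining \(i\) consecutive applications of (1), (2), and (3) respectively.
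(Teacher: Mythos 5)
Your parts (1) and (2) are correct: with \(m_{p,q}\) defined as a minimum over all lattice paths, deleting one step of an optimal path strictly decreases the \(m\)-value (your computations \(bf+ah+bh>0\) and \(af+bf+ah>0\) check out), and this is even more elementary than the paper's route, which proves the corresponding inequalities for the words \(c(n,d)\) via the flattening Lemma~\ref{lem:flattening}. The closing remark about chaining increments of \(1\) to recover the original conjectures is also consistent with the paper.

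Part (3), however, contains a genuine gap, and it is the heart of the theorem (the fixed sum conjecture). Your argument reduces it to the claim that an optimal path \(\bw=\bu A\bv\) to \((q+1,p)\) has a horizontal step for which \(M^{\bu}_{12}M^{\bv}_{22}>M^{\bu}_{11}M^{\bv}_{12}\), but you neither specify which step to flip nor prove this inequality; ``granting this'' concedes exactly what has to be shown. Mere existence of a factor \(aa\) is far from sufficient, as your own remark about value-raising flips indicates; for instance, flipping the \(a\) just before the final \(b\) needs \(M^{\bu}_{12}>M^{\bu}_{11}\), which fails for arbitrary prefixes \(\bu\), so some structural information on the minimizer is unavoidable. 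But the structural input you invoke --- that an optimal path is (a power of) a Christoffel word --- is itself Proposition~\ref{prop:minimality}, whose proof via flattenings, liftings and composite flips (Lemmas~\ref{lem:flattening},~\ref{lem:lifting} and~\ref{lem:compositeFlip}) constitutes most of the paper's work, and you assume it without proof. For comparison, the paper does not use a single-letter flip here at all: writing \(c(p,q+1)=\bw a(ab)^k ab\) with \(k\) maximal (using that \(c(p,q+1)\) contains no factor \(bb\) and ends in \(ab\)), it replaces the suffix \(a(ab)^k ab\) by \(ab(ab)^k b\) via the Pell identity \((AB)^kA-B(AB)^k=\Pell_{2k+1}\left(\begin{smallmatrix}-1&0\\0&1\end{smallmatrix}\right)\) of Claim~\ref{clm:eq-fsc}, which strictly decreases the \(m\)-value, and then bounds the resulting path's value below by \(m(c(p+1,q))\) using Proposition~\ref{prop:minimality}. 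A completed version of your plan would likewise need the minimality result plus a concrete choice of flip together with a proof of the displayed inequality; none of this is present, so as written part (3) is a plan rather than a proof.
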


In Proposition~\ref{prop:minimality}, we will see that the minimum \(m\)-value among all paths ending in a same point is attained when the path is a Christoffel word (or a power of a Christoffel word). If \(q,p\) are relatively prime, the Christoffel word is the path in \(\NN^2\) which is the ``closest" of the line linking \((q,p)\) with the origin. One of the main tool is the extension of these Christoffel words to their repetitions when \(q,p\) are not anymore relatively prime. 

Finally, we need to consider all paths and not only powers of Christoffel ones. Indeed, we will transform a power of Christoffel into another one via a sequence of local flips. And in particular, the intermediate paths are not anymore powers of a Christoffel word.

It seems that one originality in our work is the choice of our couple of matrices \(A\) and \(B\). Even if this couple is well known (they are the standard generators for the commutator subgroup of \(\rm{Sl}_2(\ZZ)\)), most of the previous work is based on the couple \(\begin{pmatrix} 1&1 \\ 1&2\end{pmatrix}\) and \(\begin{pmatrix} 3&2 \\ 4&3\end{pmatrix}\). There is a natural reason to that. The standard first triplet of Cohn matrices is \(\left(\begin{pmatrix} 1&1 \\ 1&2\end{pmatrix}, \begin{pmatrix} 7&5 \\ 11&8\end{pmatrix}, \begin{pmatrix} 3&2 \\ 4&3\end{pmatrix}\right)\). People usually identify this triplet with the triplet of words \((\mathfrak{a},\mathfrak{ab},\mathfrak{b})\) which explains the last couple. However, in the Farey tree, the first triplet is \((\frac{0}{1},\frac{1}{2},\frac{1}{1})\), so it becomes more convenient for us to work with the words \((a,aab,ab)\) since like that the numerator and the denominator of the Farey fraction directly give the number of `b' and of `a' in our words. Particularly, identifying the word \(ab\) with  \(\begin{pmatrix} 3&2 \\ 4&3\end{pmatrix}\), we get our choice for \(A\) and \(B\).

In Appendix~\ref{app:CohnW} we focus on the translation between both models showing that Proposition~\ref{prop:cohnMarkov} is really a rewriting of known results. Then, in Aigner's book, after stating these three conjectures the author describes in few pages some ideas to attack these conjectures. We show in Appendix~\ref{app:gFunc} how our work relates to this approach solving on the way some of the questions which are asked there.

The paper is organized as follows. In the first section, we define the tools and objects we will need in the paper. We start with basic tools as the words, paths on the $\NN^2$ grid and the Christoffel words. In the same time, we define the \(m\)-value. Then, we are interested in some initial properties of this \(m\)-value. In particular, we will focus (Lemmas~\ref{lem:simpleFlip} and~\ref{lem:compositeFlip}) on the variation of this value during a flip transformation. We ends this first section with some basic facts on integer geometry. The second section focuses on the fixed parameters conjectures. We start with the study of a path transformation: the flattening. We show that the \(m\)-value strictly decreases during this operation in Lemma~\ref{lem:flattening}. Using it, we will show the first two conjectures (Proposition~\ref{prop:FDC} and Corollary~\ref{prop:FNC}). Then, we will consider another transformation, the lifting, and see in Lemma~\ref{lem:flattening} that the \(m\)-value also decreases with it. It allows us to prove Proposition~\ref{prop:minimality} which links minimal paths with powers of Christoffel words (proving also in the way Conjecture 10.16 from~\cite{Aig15}). Finally, the proof of Proposition~\ref{prop:FSC} solving the fixed sum conjecture ends this paper.

\section{Tools}

\subsection{Words, paths, and their \(m\)-value}

For simplicity of notations, we will use capital letters for denoting matrices, bold letters for denoting words and vectors, and small letters for denoting letters and numbers. For example \(I\) will denote in the following the identity matrix (always of size \(2\times 2\) in this paper).

If \(\bw \in \{a,b\}^\star\) is a word, we will denote by \(\overline{\bw}\) its reversal.

We will mainly consider the words over an alphabet \(\{a,b\}\) of two letters. For each word \(\bw \in \setw\), we associate the corresponding matrices product \(M^{\bw}\) given by the morphism
	\begin{align*}
		M : \{a,b\}^\star & \rightarrow  \rm{SL}_2(\ZZ) \\
			\bw & \mapsto M^\bw
	\end{align*}
	defined by
\[
	A \defeq M^a = \begin{pmatrix} 1 & 1 \\ 1 & 2 \end{pmatrix} \quad \text{and} \quad B \defeq M^b = \begin{pmatrix} 2 & 1 \\ 1 & 1 \end{pmatrix}.
\]
The matrices \(A\) and \(B\) are similar and \(J = J^{-1} = \begin{pmatrix} 0 & 1 \\ 1 & 0\end{pmatrix}\) is their change-of-basis matrix. Moreover, these two matrices are well studied in the context of Frobenius' conjecture since they are generators of the Cohn matrices~\cite{Co55} (it will be especially used in this paper to state Proposition~\ref{prop:cohnMarkov}, but the interested reader can find much more information in~\cite{Aig15}).

We also associate to any \(\bw \in \{a,b\}^\star\) its \(m\)-value (we identify a matrix of order \(1 \times 1\) with its unique entry)
\[
	m(\bw) = \begin{pmatrix}1&0\end{pmatrix} \cdot M^{\bw} \cdot \begin{pmatrix} 0 \\ 1 \end{pmatrix}.
\]
We choose the name \(m\)-value, as it is a generalization of the Markov numbers as we will see in Proposition~\ref{prop:cohnMarkov}.

As usually, we will identify words over \(\{a,b\}\) with their path from \((0,0)\) in the integer lattice \(\NN \times\NN\) (the letter \(a\) corresponds to a horizontal step, and \(b\) to a vertical one).

For example, we associate to the word \(aabab\), the product of matrices 
\[M^{aabab} = A\cdot A \cdot B\cdot A\cdot B = \begin{pmatrix}41 & 29 \\ 65 & 46 \end{pmatrix} \]
and the value 
\[m(aabab) = \begin{pmatrix}1&0\end{pmatrix} \cdot \begin{pmatrix}41 & 29 \\ 65 & 46 \end{pmatrix} \cdot \begin{pmatrix} 0 \\ 1 \end{pmatrix}
 = 29.\]

\begin{figure}[h]
	\includegraphics[scale=.3]{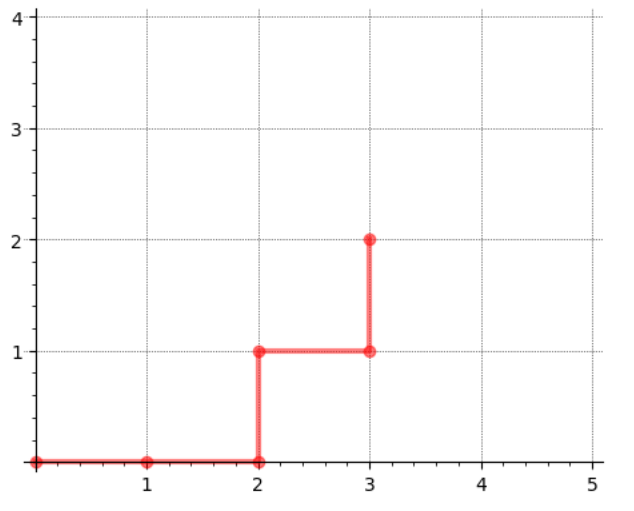}
	\centering
	\caption{Path associated to the word \(aabab\).}
	\label{fig:aabab}
\end{figure}

The corresponding path is showed in Figure~\ref{fig:aabab}.

\subsubsection*{Christoffel words and Markov numbers}

We will consider particularly Christoffel words. 

Let \(u\) and \(v\) be two natural numbers relatively prime. Following~\cite{BLRS09}, the (lower) Christoffel path \(\chris{v/u}\) of slope \(v/u\) is the path (and so the corresponding word by identification) from \((0,0)\) to \((u,v)\) in the integer lattice \(\NN \times \NN\) that satisfies the following two conditions:
\begin{itemize}
	\item the path lies below the line segment that begins at the origin and ends at \((u,v)\),
	\item and the region in the plane enclosed by the path and the line segment contains no other points of \(\NN \times \NN\) besides those of the path and of the line segment.
\end{itemize}

We will call \(c(v,u)\), a generalization of this notion when \(u\) and \(v\) are not relatively prime. The generalized Christoffel path \(c(v,u)\) is the path from \((0,0)\) to \((u,v)\) in the integer lattice \(\NN \times \NN\) that satisfies the following three conditions:
\begin{itemize}
	\item the path lies below the line segment that begins at the origin and ends at \((u,v)\),
	\item all points contained in the line segment are also contained in the path,
	\item and the region in the plane enclosed by the path and the line segment contains no other points of \(\NN \times \NN\) besides those of the path.
\end{itemize}

In particular a generalized Christoffel is just a power of the original Christoffel word of same slope. If \(\gcd(u,v)=\delta\), then \(c(v,u)\) is the word \((\chris{v/u})^{\delta}\).

We stated before that the \(m\)-value is related to the Markov numbers. In fact the Markov numbers are exactly the set of the \(m\)-values of the lower Christoffel words. 
\begin{proposition}\label{prop:cohnMarkov}
	If \(n\) and \(d\) are relatively prime with \(n \leq d\), then \(m(c(n,d))\) corresponds to the Markov number associated to the Farey fraction \(n/d\).
\end{proposition}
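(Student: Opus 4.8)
The plan is to establish the correspondence in two stages. First, I would recall the classical description of Markov numbers via Cohn matrices: there is a tree of triples of matrices in $\mathrm{SL}_2(\ZZ)$, rooted at the triple of "Cohn matrices" whose entries encode the Markov triple, and the entry being tracked (here the top-right entry, our $m$-value) runs through exactly the Markov numbers as one descends the tree. The combinatorial skeleton of this tree is the Stern–Brocot/Farey tree, so each node is naturally indexed by a fraction $n/d \in [0,1]$. Second, I would check that the morphism $M$ defined in the paper, sending $a \mapsto A$ and $b \mapsto B$, reproduces exactly this Cohn tree when one feeds it the Christoffel words: the Christoffel word $c(n,d)$ is precisely the word whose position in the Stern–Brocot tree is $n/d$, and the product of $A$'s and $B$'s along it is the Cohn matrix sitting at that node.

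**Key steps, in order.** (1) State precisely the Cohn/Markov correspondence I am invoking — ideally citing \cite{Co55, Aig15} — namely that the standard Cohn tree starts from $\left(\begin{pmatrix} 1&1\\1&2\end{pmatrix}, \begin{pmatrix}7&5\\11&8\end{pmatrix}, \begin{pmatrix}3&2\\4&3\end{pmatrix}\right)$ and that the top-right entries are the Markov numbers indexed by the Farey tree. (2) Verify the base case: the three fractions $0/1$, $1/2$, $1/1$ correspond to the Christoffel words $a$, $aab$, $ab$, and indeed $M^a = A = \begin{pmatrix}1&1\\1&2\end{pmatrix}$, $M^{ab} = \begin{pmatrix}3&2\\4&3\end{pmatrix}$, and $M^{aab} = A^2B = \begin{pmatrix}7&5\\11&8\end{pmatrix}$, so the $m$-values $1$, $2$, $5$ are the first Markov numbers, matching $m_{0/1}=1$, $m_{1/2}=2$, $m_{1/1}=5$. (3) Recall the recursive (mediant) rule on both sides and show it is the same rule. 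On the word side, Christoffel words obey the standard concatenation law: if $c(a',b')$ and $c(c',d')$ are the Christoffel words at two Farey-adjacent fractions $a'/b' < c'/d'$, then the Christoffel word at the mediant $(a'+c')/(b'+d')$ is the concatenation $c(a',b')\,c(c',d')$ (this is the classical decomposition of Christoffel words; see \cite{BLRS09}). Applying the morphism $M$ turns concatenation into matrix product, so the matrix at the mediant node is the product of the two parent matrices — which is exactly the Cohn tree's generation rule. (4) Conclude by induction on the depth in the Farey tree: the matrix $M^{c(n,d)}$ equals the Cohn matrix at node $n/d$, hence its top-right entry $m(c(n,d))$ is the Markov number $m_{n/d}$.

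**Main obstacle.** The genuine mathematical content is all in step (3)–(4): matching the two recursive structures and getting the Christoffel concatenation identity to line up with the Cohn-matrix product rule, together with keeping track of which triple of adjacent Farey fractions corresponds to which ordered triple of words. One must be a little careful that the "standard identification" in the literature uses the letters $\mathfrak{a}, \mathfrak{ab}, \mathfrak{b}$ whereas here we use $a, aab, ab$ — this is precisely the relabelling the introduction flags, obtained by the change of basis $J$, and it must be threaded consistently so that "top-right entry" on our side matches the tracked entry on the classical side. Everything else (the base-case matrix computations, the fact that $c(n,d)$ for $\gcd(n,d)=1$ is the Stern–Brocot word at $n/d$) is routine. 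In fact, since the paper explicitly says this proposition "is really a rewriting of known results" and defers the detailed dictionary to Appendix~\ref{app:CohnW}, I expect the proof in the body to be short: cite the Cohn correspondence, check the base case, invoke the Christoffel concatenation identity, and induct — with the careful bookkeeping of the relabelling being the one place a reader should slow down.
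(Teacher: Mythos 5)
Your route is sound but genuinely different from the paper's. The paper does not redo any tree induction: it quotes Theorem 7.6 of \cite{Aig15}, which identifies the Cohn word \(W_t\) at node \(t\) of the Cohn tree with the lower Christoffel word \(\mathrm{ch}_t\) over the alphabet \(\{\mathfrak{a},\mathfrak{b}\}\) (of slope \(p/(q-p)\) for \(t=p/q\)), and then translates alphabets via the morphism \(G:\mathfrak{a}\mapsto a\), \(\mathfrak{b}\mapsto ab\), invoking Lemma 2.2 of \cite{BLRS09} to know that \(G(\mathrm{ch}_{p/q})\) is the Christoffel word of slope \(p/q\); since \(\phi(\mathfrak{w})=M^{G(\mathfrak{w})}\), the Cohn matrix \(C_t\) equals \(M^{c(n,d)}\), and the conclusion follows from the classical fact that the top-right entries of the \(C_t\) are exactly the Markov numbers (this is the content of Appendix~\ref{app:CohnW}). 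You instead rerun the tree induction directly on the alphabet \(\{a,b\}\), matching the Cohn product rule with the standard factorization of Christoffel words at the mediant; this works — the factorization with the smaller-slope factor on the left is precisely what makes the two recursions coincide, and since your base triple is \((A,A^2B,AB)\) no change of basis or letter substitution is ever needed — so your argument is more self-contained on the word side (only the Cohn-matrix description of Markov numbers plus the standard factorization are used), at the cost of reproving what Aigner's Theorem 7.6 already packages. One slip to fix in your base case: the correct values are \(m_{1/2}=5\) (word \(aab\), matrix \(A^2B\)) and \(m_{1/1}=2\) (word \(ab\), matrix \(AB\)), not \(m_{1/2}=2\), \(m_{1/1}=5\) as written; your word-to-fraction assignment is right, so this is only a mislabeling of the listed values, but it sits exactly at the bookkeeping point you yourself flagged as delicate, so it is worth correcting before the induction starts.
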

This proposition is in fact a restatement of Theorem 7.6 in~\cite{Aig15}. As our objects are defined quite differently, this conversion between the two statements is detailed in Appendix~\ref{app:CohnW}.

The main idea in this work is to associate a Markov number not only to the Farey fractions \(n/d\)
\begin{itemize}
	\item but also to any couple of integers \((d,n)\) (we associate \(m(c(n,d))\)),
	\item and even more to any path \(\bw\) in \(\NN\times\NN\) (we associate \(m(\bw)\)).
\end{itemize} 

In fact, Proposition~\ref{prop:minimality} will show that the numbers \(m(c(n,d))\) are exactly the numbers \(m_{n,d}\) which we defined in the Introduction.

\subsection{\(m\)-values of different words}

\subsubsection{First properties on the \(m\)-values of the words \(c(n,d)\)}

We introduce the following morphism (i.e., satisfying \(E(\bu \bv)=E(\bu)(\bv)\) for all \(\bu,\bv \in \{a,b\}^\star\) )
\[
E : \{a,b\}^\star \rightarrow \{a,b\}^\star \] defined by \(E(a)=b\), \(E(b)=a\) (the name \(E\) stands for `Exchange of letters'). 
We will denote by \(\overline{E}\) the antimorphism we get by composing \(E\) with the reversal transformation \(\overline{E} \defeq E \circ (\bw \mapsto \overline{\bw})\) (one can notice that the two transformations commute \(\overline{E(\bw)} = E(\overline{\bw}) \), so we can equivalently define \(\overline{E}\) by \((\bw \mapsto \overline{\bw}) \circ E\)).

It is well-known that \(E\) is the transformation which relates a lower Christoffel of slope \(\rho\) with the upper Christoffel of slope \(\rho^{-1}\) (see for example Lemma~2.6 in~\cite{BLRS09}, the proof is just the symmetry with respect to the first diagonal). Moreover, it is also known that the reversal of a upper Christoffel word is the lower Christoffel word with the same slope (see for example Proposition 4.2 in~\cite{BLRS09}).  In particular \(\overline{E}(\chris{\rho}) = \chris{\rho^{-1}}\). This is still true for generalized Christoffel words:
\begin{claim}\label{clm:E-paths}
	For any \((d,n) \in \NN\times\NN\), \(c(d,n) = \overline{E}(c(n,d))\).
\end{claim}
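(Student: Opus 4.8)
The plan is to reduce the generalized statement to the classical one for (ordinary) Christoffel words, which is quoted in the excerpt. First I would dispose of degenerate cases: if $n=0$ the path $c(0,d)$ is $a^d$ and $\overline{E}(a^d) = b^d = c(d,0)$ (and symmetrically if $d=0$), so from now on assume $n,d \geq 1$. Let $\delta = \gcd(n,d)$ and write $n = \delta n'$, $d = \delta d'$ with $\gcd(n',d')=1$. By the definition recalled in the excerpt, $c(n,d) = (\chris{n/d})^{\delta}$, and likewise $c(d,n) = (\chris{d/n})^{\delta}$. So it suffices to show two things: (i) $\overline{E}$ is an antimorphism that turns a $\delta$-th power into the $\delta$-th power of the image with the factors in the same order — i.e. $\overline{E}(\bu^{\delta}) = \big(\overline{E}(\bu)\big)^{\delta}$ — and (ii) the base case $\overline{E}(\chris{n/d}) = \chris{d/n}$, which is exactly the classical fact quoted just before the claim (lower Christoffel of slope $\rho$ maps under $\overline{E}$ to the lower Christoffel of slope $\rho^{-1}$, here with $\rho = n/d$).

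For (i), recall $\overline{E} = E \circ (\bw \mapsto \overline{\bw})$ where $E$ is a morphism and reversal is an antimorphism; hence $\overline{E}$ is an antimorphism, $\overline{E}(\bu\bv) = \overline{E}(\bv)\,\overline{E}(\bu)$. Applying this to $\bu^{\delta} = \bu \cdot \bu \cdots \bu$ gives $\overline{E}(\bu^{\delta}) = \overline{E}(\bu)\cdot \overline{E}(\bu) \cdots \overline{E}(\bu) = \big(\overline{E}(\bu)\big)^{\delta}$; the reversal of the order of the $\delta$ identical blocks is invisible. Combining (i) with $\bu = \chris{n/d}$ and (ii):
\[
\overline{E}\big(c(n,d)\big) = \overline{E}\big((\chris{n/d})^{\delta}\big) = \big(\overline{E}(\chris{n/d})\big)^{\delta} = (\chris{d/n})^{\delta} = c(d,n),
\]
which is the claim. (One should double-check the orientation convention: $\overline{E}$ sends a path from $(0,0)$ to $(d,n)$ to a path from $(0,0)$ to $(n,d)$, since $E$ swaps the two letters — hence swaps horizontal and vertical step counts — and reversal preserves the endpoint of a path starting at the origin up to this swap; this matches the stated endpoints of $c(n,d)$ and $c(d,n)$.)

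I do not expect any real obstacle here: the only content is the classical identity $\overline{E}(\chris{\rho}) = \chris{\rho^{-1}}$, which the paper explicitly imports from \cite{BLRS09}, plus the trivial observation that an antimorphism commutes with taking powers of a single block. The one point deserving a sentence of care is making sure the generalized Christoffel as defined in the excerpt (the three bullet-point conditions, with \emph{all} lattice points on the segment forced to lie on the path) really does coincide with $(\chris{n/d})^{\delta}$ — but this is asserted in the excerpt immediately after the definition, so I may cite it directly rather than reprove it.
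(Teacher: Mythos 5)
Your proposal is correct and follows essentially the same route as the paper: write \(c(n,d)=(\chris{n/d})^{\delta}\), use that \(\overline{E}\) commutes with taking the \(\delta\)-th power of a single block, and invoke the classical identity \(\overline{E}(\chris{\rho})=\chris{\rho^{-1}}\) imported from~\cite{BLRS09}. The only difference is that you spell out the degenerate cases \(n=0\) or \(d=0\) and the antimorphism-power step explicitly, which the paper leaves implicit.
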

\begin{proof}
	Let \(\delta\) be the \(\gcd\) of \(n\) and \(d\). Then, \(c(d,n)  = (\chris{d/n})^{\delta} = \left(\overline{E}(\chris{n/d})\right)^{\delta} = \overline{E}\left((\chris{n/d})^{\delta}\right) = \overline{E}(c(n,d)) \).
\end{proof}

This exchange of letters can be easily restated in terms of matrices
\begin{claim}\label{clm:E-matrices}
	Let \(\bw \in \{a,b\}^\star\), then
	\[
		M^{\overline{E}(\bw)} = \left(J M^\bw  J\right)^T.
	\]
\end{claim}

\begin{proof}
	We recall that \(J\) is involutory and is the change-of-basis matrix between \(A\) and \(B\). Moreover \(A\), \(B\) and \(J\) are symmetric.
	The proof is by induction on the size of \(\bw\).
	
	If \(\bw\) is the empty word then we get the identity matrix on both sides of the equation.
	
	Otherwise, \(\bw = \bu x\) where \(\bu\in \{a,b\}^\star\) and \(x \in \{a,b\}\). By induction hypothesis,
	\begin{align*}
		M^{\overline{E}(\bu x)} & = M^{E(x)}M^{\overline{E}(\bu)} \\
		& =  J M^x J \left(J M^\bu J\right)^T \\
		& = \left(J  M^{\bu x} J \right)^T.
	\end{align*}
\end{proof}

Considering all couples \((d,n)\) instead of those which are relatively prime with \(n<d\), we lose the unicity conjecture:

\begin{lemma}\label{lem:SymFD}
	Let \((d,n)\in \NN^2\), then \(m(c(d,n)) = m(c(n,d))\).
\end{lemma}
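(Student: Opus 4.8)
We want $m(c(d,n)) = m(c(n,d))$. The natural idea is to use Claim~\ref{clm:E-paths}, which tells us $c(d,n) = \overline{E}(c(n,d))$, together with Claim~\ref{clm:E-matrices}, which relates $M^{\overline{E}(\bw)}$ to $M^{\bw}$ via $M^{\overline{E}(\bw)} = (J M^{\bw} J)^T$.

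So: let $\bw = c(n,d)$. Then $M^{c(d,n)} = M^{\overline{E}(\bw)} = (JM^{\bw}J)^T$. Now compute the $m$-value: $m(c(d,n)) = \begin{pmatrix}1&0\end{pmatrix}(JM^{\bw}J)^T\begin{pmatrix}0\\1\end{pmatrix}$.

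Key step: transposing, $\begin{pmatrix}1&0\end{pmatrix}(JM^{\bw}J)^T\begin{pmatrix}0\\1\end{pmatrix} = \left(\begin{pmatrix}0&1\end{pmatrix}(JM^{\bw}J)\begin{pmatrix}1\\0\end{pmatrix}\right)^T$, which is a scalar, so equals $\begin{pmatrix}0&1\end{pmatrix}J M^{\bw} J\begin{pmatrix}1\\0\end{pmatrix}$. But $\begin{pmatrix}0&1\end{pmatrix}J = \begin{pmatrix}1&0\end{pmatrix}$ and $J\begin{pmatrix}1\\0\end{pmatrix} = \begin{pmatrix}0\\1\end{pmatrix}$, so this is exactly $\begin{pmatrix}1&0\end{pmatrix}M^{\bw}\begin{pmatrix}0\\1\end{pmatrix} = m(\bw) = m(c(n,d))$.

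So the whole thing is just bookkeeping: the map $\bw \mapsto \overline{E}(\bw)$ is exactly the symmetry swapping the roles of the two coordinates, and the $m$-value (the top-right entry) is invariant under it because conjugating by $J$ and transposing moves the $(1,2)$ entry back to the $(1,2)$ entry. I don't anticipate a real obstacle here; the only thing to be careful about is the direction of the transpose and making sure the row/column vectors pick out the right entry after conjugation by $J$ — a one-line matrix computation. Let me write it out.
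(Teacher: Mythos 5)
Your proof is correct and follows exactly the paper's route: apply Claim~\ref{clm:E-paths} to write \(c(d,n)=\overline{E}(c(n,d))\), then use Claim~\ref{clm:E-matrices} together with the fact that a scalar equals its transpose to move the \(J\)'s onto the boundary vectors and recover \(m(c(n,d))\). The only cosmetic difference is that the paper states the computation as the general identity \(m(\overline{E}(\bw))=m(\bw)\) for every word \(\bw\), but the calculation is the same one you wrote out.
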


\begin{proof}
	More generally for \(\bw \in \{a,b\}^\star\), we have,
	\begin{align*}
		m\left(\overline{E}(\bw)\right) & = \begin{pmatrix} 1 & 0	\end{pmatrix} M^{\overline{E}(\bw)} \begin{pmatrix} 0\\ 1 \end{pmatrix} \\
		& = \left(\begin{pmatrix} 1 & 0	\end{pmatrix} M^{\overline{E}(\bw)} \begin{pmatrix} 0\\ 1 \end{pmatrix}\right)^T \\
		& = \begin{pmatrix} 0 & 1	\end{pmatrix} J M^{\bw} J  \begin{pmatrix} 1\\ 0 \end{pmatrix} \\
		& = 	\begin{pmatrix} 1 & 0	\end{pmatrix} M^{\bw} \begin{pmatrix} 0\\ 1 \end{pmatrix} \\
		& = m(\bw).
	\end{align*}
\end{proof}

In Figure~\ref{fig:firstM}, we computed the first values of \(m(c(n,d))\). We notice that when \(n\) and \(d\) are relatively prime, we get the Markov numbers.

\begin{figure}[h]
	\centering
	{
		\tiny
		\begin{equation*}
		\left[\begin{array}{rrrrrrrrrrr}
			6765 & 10946 & 23763 & 51641 & 112908 & 249755 & 562467 & 1278818 & 2910675 & 6625109 & 15994428 \\
			2584 & 4181 & 9077 & 19760 & 43261 & 96557 & 219472 & 499393 & 1136689 & 2744210 & \color{red} 6625109 \\
			987 & 1597 & 3468 & 7561 & 16725 & 37666 & 85683 & 195025 & 470832 & \color{red} 1136689 & 2910675 \\
			377 & 610 & 1325 & 2897 & 6466 & 14701 & 33461 & 80782 & \color{red} 195025 & \color{red} 499393 & \color{red} 1278818 \\
			144 & 233 & 507 & 1120 & 2523 & 5741 & 13860 & \color{red} 33461 & 85683 & 219472 & 562467 \\
			55 & 89 & 194 & 433 & 985 & 2378 & \color{red} 5741 & \color{red} 14701 & \color{red} 37666 & \color{red} 96557 & 249755 \\
			21 & 34 & 75 & 169 & 408 & \color{red} 985 & 2523 & \color{red} 6466 & 16725 & \color{red} 43261 & 112908 \\
			8 & 13 & 29 & 70 & \color{red} 169 & \color{red} 433 & 1120 & \color{red} 2897 & \color{red} 7561 & 19760 & \color{red} 51641 \\
			3 & 5 & 12 & \color{red} 29 & 75 & \color{red} 194 & 507 & \color{red} 1325 & 3468 & \color{red} 9077 & 23763 \\
			1 & \color{red} 2 & \color{red} 5 & \color{red} 13 & \color{red} 34 & \color{red} 89 & \color{red} 233 & \color{red} 610 & \color{red} 1597 & \color{red} 4181 & \color{red} 10946 \\
			0 & \color{red} 1 & 3 & 8 & 21 & 55 & 144 & 377 & 987 & 2584 & 6765
		\end{array}\right]
		\end{equation*}
	}
	\caption{\(m\)-values of \(c(n,d)\) for \(0 \leq d,n \leq 10\). We highlight in red the Markov numbers.}
	\label{fig:firstM}
\end{figure}

But we conjecture that repetitions in Lemma~\ref{lem:SymFD} are the only repetitions, in particular we can state a stronger conjecture about unicity: 
\begin{conjecture}
	If \(n \leq d\), \(n^\prime \leq d^\prime\) and if \(m(c(n,d)) = m(c(n^\prime,d^\prime))\), then
	we have 
	\[
		n=n^\prime \ \textrm{ and } \ d= d^\prime.
	\]
\end{conjecture}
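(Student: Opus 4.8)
This conjecture strictly strengthens Frobenius' unicity conjecture — its restriction to pairs $(n,d)$ with $\gcd(n,d)=1$ and $n<d$ is precisely that famously open problem — so a complete proof is out of reach, and what follows is a plan designed to isolate the genuine obstacle. By Lemma~\ref{lem:SymFD} we may work inside $R=\{(n,d)\in\NN^2 : n\le d\}$. The engine is Theorem~\ref{thm:main}, which equips $R$ with three ``elementary moves'', each strictly increasing $m(c(\cdot,\cdot))$: $(p,q)\to(p,q+1)$; $(p,q)\to(p+1,q)$ when $p<q$; and $(p+1,q)\to(p,q+1)$ when $p+1\le q$. Let $\prec$ be the partial order they generate; then $(p,q)\prec(p',q')$ implies $m(c(p,q))<m(c(p',q'))$, so the conjecture would be immediate if $\prec$ were total on $R$.

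It is not: along every elementary move both $q$ and $p+q$ are non-decreasing, so (for instance) $(3,3)$ and $(1,4)$ are $\prec$-incomparable. Hence Theorem~\ref{thm:main} alone cannot settle the conjecture, and the substance lies in comparing pairs ``sitting across each other'', namely pairs on nearby antidiagonals whose Farey fractions interleave. I would first peel off the genuinely new entries, which by Proposition~\ref{prop:minimality} are the proper powers $(\chris{n/d})^{\delta}$ with $\delta=\gcd(n,d)\ge 2$; this class includes the even-indexed Fibonacci numbers $m(c(0,d))=\Fibo_{2d}$ and the even-indexed Pell numbers $m(c(k,k))=\Pell_{2k}$, for which Binet-type closed forms are available. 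Combining the multiplicative structure of $m\bigl((\chris{n/d})^{\delta}\bigr)$ with the exponential growth of the dominant eigenvalue of $M^{\chris{n/d}}$ in the length $(n+d)/\delta$ of $\chris{n/d}$, one should be able to show that such a proper power never equals a Markov number nor another proper power; the only difficulty is ruling out sporadic arithmetic coincidences, which the closed forms settle unconditionally for the Fibonacci and Pell sub-families. This step reduces the conjecture to the coprime case.

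For the coprime case the plan is to upgrade the flip technology — Lemmas~\ref{lem:simpleFlip} and~\ref{lem:compositeFlip}, together with the flattening and lifting transformations — from ``the $m$-value strictly decreases'' to a quantitative statement: each elementary move multiplies the $m$-value by at least some explicit factor $\lambda_d>1$ that grows with $d$, while the steepest move within a fixed antidiagonal multiplies it by at most some constant $\mu$. If one proves that $\lambda_d$ dominates the product of the $\mu$'s over the $O(d)$-wide antidiagonal, then any two pairs become comparable — first by antidiagonal index, and then through the chain structure that part~3 of Theorem~\ref{thm:main} already imposes on each antidiagonal. I expect this quantitative bound to be the main obstacle: it is exactly the point where Frobenius' conjecture is classically stuck, since separating $m(c(n,d))$ from every other Markov number demands a fine grip on how the continued-fraction expansion of $n/d$ governs the entries of $M^{c(n,d)}$, and no such grip is presently available. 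A realistic intermediate target is therefore the conditional statement that the stronger unicity conjecture reduces to Frobenius' conjecture.
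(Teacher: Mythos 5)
This statement is one of the paper's \emph{conjectures}: the paper offers no proof, only a computational verification for all \(0\le n\le d\le 1000\) together with the remark that, by Propositions~\ref{prop:FDC} and~\ref{prop:FNC}, any collision must involve a value larger than \(m(c(1,1000))=\Fibo_{2001}\). Your proposal is likewise not a proof but a research plan, and you say so explicitly; that is the honest state of affairs, since (via Proposition~\ref{prop:cohnMarkov}) the coprime case of the statement is exactly Frobenius' unicity conjecture. So there is no proof to compare on either side; what can be assessed is whether the intermediate steps of your plan are sound.

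Two of them are not, or are themselves open. First, the ``peel off the proper powers'' step is not a routine consequence of Binet-type formulas: showing that \(\Fibo_{2d}\) or \(\Pell_{2k}\), or more generally \(m\bigl((\chris{n/d})^{\delta}\bigr)\) with \(\delta\ge 2\), never equals a Markov number nor another such value is precisely an instance of the extended unicity problem, and growth-rate arguments only separate values of very different sizes, not values of comparable magnitude sitting on nearby antidiagonals (which is where all the danger lies). Second, the proposed quantitative scheme --- compare first by antidiagonal index, using a per-step factor \(\lambda_d\) dominating the spread \(\mu\)-factors within an antidiagonal --- cannot work as stated, because the ranges of \(m\)-values on consecutive antidiagonals interleave: already in Figure~\ref{fig:firstM} one reads \(m(c(5,6))=5741<6765=m(c(0,10))\) although \(5+6>0+10\), and the within-antidiagonal spread (from roughly \((1+\sqrt2)^{s}\) near the diagonal to roughly \(\varphi^{2s}\) at \((0,s)\)) grows exponentially in \(s\) while one elementary move multiplies the value by a bounded factor, so no such domination is possible. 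Your correct observations --- the reduction to \(n\le d\) via Lemma~\ref{lem:SymFD}, and the fact that the order generated by Theorem~\ref{thm:main} is not total (both \(q\) and \(p+q\) are monotone along the moves, so e.g.\ \((3,3)\) and \((1,4)\) are incomparable) --- only recover what the paper itself uses for its ``least counterexample'' remark; the conjecture itself remains open, and even your suggested fallback, reducing it to Frobenius' conjecture, is not established by anything in the proposal.
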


We tested the conjecture for all \(m(c(n,d))\) with \(d,n \leq 1000\), and we found no collisions. Using Propositions~\ref{prop:FNC} and~\ref{prop:FDC}, it ensures that if a collision occurs, it will be for a number larger than \(m(c(1,1000)) = \Fibo_{2001} > 6 \cdot 10^{417}\) (where \(\Fibo_{2001}\) is the \(2001^{\textrm{th}}\) Fibonacci number, see next section for more information about Fibonacci numbers).

\subsubsection{Fibonacci and Pell numbers}

It is well known that odd-indexed Fibonacci numbers and odd-indexed Pell numbers are part of the Markov numbers. It is noteworthy to mention that even-indexed Fibonacci and Pell numbers also appear among the \(m\)-values of the \(c(n,d)\).

We recall that the Fibonacci numbers are defined by \(\Fibo_0 = 0\), \(\Fibo_1 = 1\) and \(\Fibo_{n+2} = \Fibo_{n+1}+\Fibo_n\) for \(n \geq 2\). Similarly, Pell numbers are defined by \(\Pell_0 = 0\), \(\Pell_1 = 1\) and \(\Pell_{n+2} = 2\Pell_{n+1}+\Pell_n\) for \(n \geq 2\).

We easily show by induction on \(n \geq 1\) that:
\begin{align}\label{eq:Fibo}
	\begin{pmatrix}
		\Fibo_{2n-1} & \Fibo_{2n} \\
		\Fibo_{2n} & \Fibo_{2n+1}
	\end{pmatrix}
	=
	A^n.
\end{align}
This is verified for \(n=1\), let us assume this is true for some \(n\),
\begin{align*}
	A^{n+1} & = 
	\begin{pmatrix}
		\Fibo_{2n-1} & \Fibo_{2n} \\
		\Fibo_{2n} & \Fibo_{2n+1}
	\end{pmatrix}
	\begin{pmatrix}
		1&1 \\ 1&2
	\end{pmatrix} \\
	&= \begin{pmatrix}
		\Fibo_{2n-1}+\Fibo_{2n} & \Fibo_{2n-1}+2\Fibo_{2n} \\
		\Fibo_{2n}+\Fibo_{2n+1} & \Fibo_{2n}+2\Fibo_{2n+1}
	\end{pmatrix}  \\
	&=  
	\begin{pmatrix}
		\Fibo_{2n+1} & \Fibo_{2n+2} \\
		\Fibo_{2n+2} & \Fibo_{2n+3}
	\end{pmatrix}.
\end{align*}

Similarly, we get by induction on \(n \geq 0\) that:
\begin{align}\label{eq:Pell}
	\begin{pmatrix}
		\Pell_{2n+1} - \Pell_{2n} & \Pell_{2n} \\
		2\Pell_{2n}& \Pell_{2n+1}-\Pell_{2n}
	\end{pmatrix}
	=
	(AB)^n.
\end{align}
The result is true for \(n=0\), let assume it is true at a rank \(n\),
\begin{align*}
	(AB)^{n+1} & = 
	\begin{pmatrix}
		\Pell_{2n+1}-\Pell_{2n} & \Pell_{2n} \\
		2\Pell_{2n} & \Pell_{2n+1}-\Pell_{2n}
	\end{pmatrix}
	\begin{pmatrix}
		3&2 \\ 4&3
	\end{pmatrix} \\
	&= \begin{pmatrix}
		3\Pell_{2n+1}+\Pell_{2n} & 2\Pell_{2n+1}+\Pell_{2n} \\
		4\Pell_{2n+1}+2\Pell_{2n} & 3\Pell_{2n+1}+\Pell_{2n}
	\end{pmatrix}  \\
	&=  
	\begin{pmatrix}
		\Pell_{2n+3}-\Pell_{2n+2} & \Pell_{2n+2} \\
		2\Pell_{2n+2} & \Pell_{2n+3}-\Pell_{2n+2}
	\end{pmatrix}.
\end{align*}

It directly implies the following (the cases 2 and 3 were already well-known)
\begin{claim}
	For \(p \in \NN\), we have
	\begin{itemize}
		\item \(m(c(0,p)) = \Fibo_{2p}\),
		\item \(m(c(1,p)) = \Fibo_{2p+1}\),
		\item \(m(c(p,p+1)) = \Pell_{2p+1}\),
		\item and \(m(c(p,p)) = \Pell_{2p}\).
	\end{itemize}
\end{claim}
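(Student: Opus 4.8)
The plan is to reduce the claim to the matrix identities~\eqref{eq:Fibo} and~\eqref{eq:Pell}: I will first write each of the generalized Christoffel words \(c(0,p)\), \(c(1,p)\), \(c(p,p+1)\) and \(c(p,p)\) explicitly as a product of the matrices \(A\) and \(B\), and then read off its \(m\)-value as the top-right entry of that product.

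The first step is a short combinatorial identification of the four words. The only path from \((0,0)\) to \((p,0)\) is \(a^p\), so \(c(0,p)=a^p\) and \(M^{c(0,p)}=A^p\). Since \(\gcd(p,p)=p\) and \(\chris{1}=ab\), the remark that a generalized Christoffel word is the corresponding power of a Christoffel word gives \(c(p,p)=(ab)^p\), hence \(M^{c(p,p)}=(AB)^p\). For \(c(1,p)\): the lower Christoffel path of slope \(1/p\) toward \((p,1)\) is the staircase of lattice points lying just below the segment, so it runs along the \(x\)-axis through \((1,0),\dots,(p,0)\) and only then steps up, whence \(c(1,p)=a^pb\) and \(M^{c(1,p)}=A^pB\). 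For \(c(p,p+1)\): the highest lattice point weakly below the segment from \((0,0)\) to \((p+1,p)\) at abscissa \(x\) is \((1,0)\) for \(x=1\) and \((x,x-1)\) for \(2\le x\le p+1\), and joining these points by a staircase gives the letter \(a\) followed by \(p\) copies of \(ab\), so \(c(p,p+1)=a(ab)^p\) and \(M^{c(p,p+1)}=A(AB)^p\).

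The second step is bookkeeping with \(2\times 2\) matrices. Assume \(p\ge 1\). By~\eqref{eq:Fibo}, the top-right entry of \(A^p\) is \(\Fibo_{2p}\), giving the first equality, and the top-right entry of \(A^pB\) is \(\Fibo_{2p-1}+\Fibo_{2p}=\Fibo_{2p+1}\) (the second column of \(B\) being \(\binom{1}{1}\)), giving the second. By~\eqref{eq:Pell}, the top-right entry of \((AB)^p\) is \(\Pell_{2p}\), which is the fourth equality, and multiplying on the left by \(A\), the top-right entry of \(A(AB)^p\) is \(\Pell_{2p}+(\Pell_{2p+1}-\Pell_{2p})=\Pell_{2p+1}\), which is the third. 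Finally \(p=0\) is checked directly: the four words are then the empty word, \(b\), \(a\) and the empty word, with top-right entries \(0=\Fibo_0\), \(1=\Fibo_1\), \(1=\Pell_1\) and \(0=\Pell_0\) (alternatively, \eqref{eq:Fibo} extends to \(n=0\) under the convention \(\Fibo_{-1}=1\), and~\eqref{eq:Pell} already covers \(n=0\)).

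I expect no real obstacle: once the four words are written out, the statement is a direct computation with the matrices of~\eqref{eq:Fibo} and~\eqref{eq:Pell}. The only point that genuinely needs an argument, rather than a formal manipulation, is identifying \(c(1,p)\) as \(a^pb\) and especially \(c(p,p+1)\) as \(a(ab)^p\), which rests on the standard description of a lower Christoffel path as the staircase of lattice points lying just below the segment.
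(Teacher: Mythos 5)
Your proposal is correct and follows essentially the same route as the paper: the paper's proof is exactly the identification \(c(0,p)=a^p\), \(c(1,p)=a^pb\), \(c(p,p+1)=a(ab)^p\), \(c(p,p)=(ab)^p\), combined with the matrix identities~(\ref{eq:Fibo}) and~(\ref{eq:Pell}). You merely spell out what the paper leaves implicit (the staircase description of the Christoffel paths, the top-right entry computations, and the \(p=0\) edge case), all of which checks out.
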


\begin{proof}
	It results of the fact that \(c(0,p) = a^p\), \(c(1,p)=a^pb\), \(c(p,p+1) = a(ab)^p\) and \(c(p,p) = (ab)^p\).
\end{proof}

\subsubsection{The flip operation}

Let us start by some direct computations. Let \(U = \begin{pmatrix} 0 & -1 \\ 1 & 0\end{pmatrix}\). 
\begin{property}\label{prop:matAB}
	\begin{itemize}
		\item \(AB-BA = 2U\),
		\item \(A U A =  U \),
		\item \(B U B = U \),
		\item \(A U B = \begin{pmatrix} 1 & 0 \\ 3 & 1\end{pmatrix}\) which is nonnegative,
		\item \(B U A = \begin{pmatrix} -1 & -3 \\ 0 & -1\end{pmatrix}\) which is nonpositive.
	\end{itemize}
\end{property}

\begin{figure}[h]
	\centering
	\includegraphics[scale=0.5]{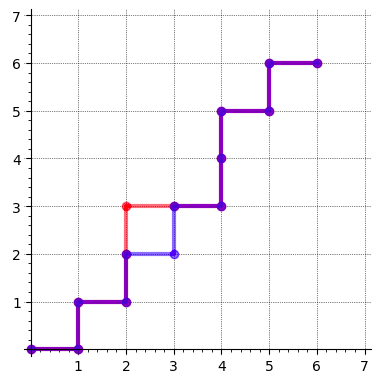}
	\caption{Simple flip between the red word `ababbaabbaba' and the blue word `ababababbaba'} 
	\label{fig:sFlip}
\end{figure}

The purpose of this section is to understand how the \(m\)-value varies when a word \(\overline{\bw_1}ba\bw_2 \in \{a,b\}^\star\) is transformed into a new word \(\overline{\bw_1}ab\bw_2\). Such a transformation (and its reversal) will be called a {\em simple flip}. Figure~\ref{fig:sFlip} shows such a transformation.

We can prove the next result (this lemma is a direct corollary -- where \(\bz\) is the empty word -- of Lemma~\ref{lem:compositeFlip} which will be proved a bit further).

\begin{lemma}\label{lem:simpleFlip}
	Let \( \bw_1, \bw_2 \in \{a,b\}^\star\) and \(\bu\) being the largest common prefix of \(\bw_1\) and \(\bw_2\). One of these cases occurs:
	\begin{enumerate}
		\item If \(\bw_1 = \bu a \bu_1\) and \(\bw_2 = \bu b \bu_2\), then \(m(\overline{\bw_1}ab\bw_2) \geq m(\overline{\bw_1}ba\bw_2)\).
		\item If \(\bw_1 = \bu b \bu_1\) and \(\bw_2 = \bu a \bu_2\), then \(m(\overline{\bw_1}ab\bw_2) < m(\overline{\bw_1}ba\bw_2)\). 
		\item If \(\bw_1 = \bu \) or \(\bw_2 = \bu\), then  \(m(\overline{\bw_1}ab\bw_2) < m(\overline{\bw_1}ba\bw_2)\).
		\item Moreover, (\(\bw_1 = \bu a\) and \(\bw_2 =\bu b\)) if and only if \(m(\overline{\bw_1}ab\bw_2) = m(\overline{\bw_1}ba\bw_2)\). 
	\end{enumerate} 
\end{lemma}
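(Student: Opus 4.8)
The plan is to reduce everything to a single matrix identity and then read off the five cases from the sign of a correction term. Write $\bw_1 = \bu \bv_1$ and $\bw_2 = \bu \bv_2$ with $\bu$ the largest common prefix, so that $\bv_1$ and $\bv_2$ have no common nonempty prefix. Then $\overline{\bw_1}\,xy\,\bw_2 = \overline{\bv_1}\,\overline{\bu}\,xy\,\bu\,\bv_2$ for $xy\in\{ab,ba\}$, and since the left multiplier $\begin{pmatrix}1&0\end{pmatrix}$ and the right multiplier $\begin{pmatrix}0\\1\end{pmatrix}$ are sandwiched around a product that factors through the middle, the difference of $m$-values is controlled by $M^{\overline{\bu}}(AB-BA)M^{\bu} = 2\,M^{\overline{\bu}} U M^{\bu}$ by Property~\ref{prop:matAB}. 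So the core computation is to understand the sign pattern of the matrix $M^{\overline{\bu}} U M^{\bu}$, and then of $\begin{pmatrix}1&0\end{pmatrix}M^{\overline{\bv_1}} \cdot \big(M^{\overline{\bu}} U M^{\bu}\big)\cdot M^{\bv_2}\begin{pmatrix}0\\1\end{pmatrix}$.

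First I would establish, by induction on $|\bu|$ using the identities $AUA = U$, $BUB = U$, $AUB = \begin{pmatrix}1&0\\3&1\end{pmatrix}\geq 0$, and $BUA = \begin{pmatrix}-1&-3\\0&-1\end{pmatrix}\leq 0$ from Property~\ref{prop:matAB}, a clean normal form: the sign behaviour of $M^{\overline{\bu}} U M^{\bu}$ depends only on whether $\bu$ is empty, or begins with $a$, or begins with $b$ — because consuming matching outer letters via $AUA=U$ and $BUB=U$ peels $\bu$ down to its first letter (after the maximal run is removed, actually one should be careful: $\overline{\bu}$ reverses $\bu$, so the relevant pair is the first letter of $\bu$ on the right and the first letter of $\bu$ on the left of $\overline{\bu}$, which is the last letter of $\bu$ — but these are the same letter, so $AUA/BUB$ do collapse the outermost pair). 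Iterating, $M^{\overline{\bu}} U M^{\bu}$ reduces to $U$ if $\bu$ is empty, to something of the form $M^{\overline{\bu'}}(AUA)M^{\bu'}=\dots$; more precisely one shows $M^{\overline{\bu}}UM^{\bu}$ is nonnegative when $\bu$ starts with $a$ and nonpositive when $\bu$ starts with $b$ (and is exactly $U$, indefinite, when $\bu$ is empty), by peeling one matched outer pair at a time until only the first/last letter remains, landing on $AUB\geq 0$ or $BUA\leq 0$.

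Next I would handle the outer words $\bv_1,\bv_2$. We want the sign of $\begin{pmatrix}1&0\end{pmatrix}M^{\overline{\bv_1}} N M^{\bv_2}\begin{pmatrix}0\\1\end{pmatrix}$ where $N = M^{\overline{\bu}}UM^{\bu}$. Since all entries of $A,B$ are positive, $\begin{pmatrix}1&0\end{pmatrix}M^{\overline{\bv_1}}$ and $M^{\bv_2}\begin{pmatrix}0\\1\end{pmatrix}$ are nonnegative (nonzero) row/column vectors, so the sign is dictated by $N$ — giving case 1 ($\bu$-branch starts $a$ on the $\bw_1$ side via $\bv_1 = a\bu_1$... ) wait, one must track which letter follows $\bu$. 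Here is the correct bookkeeping: in case 1, $\bw_1 = \bu a \bu_1$ so $\overline{\bw_1} = \overline{\bu_1} a \overline{\bu}$ and $\bw_2 = \bu b \bu_2$; the extra $ab$ vs $ba$ sits between $\overline{\bu}$ and $\bu$, but now there are also the letters $a$ (from $\overline{\bw_1}$, just left of $\overline{\bu}$... no, just right) — I would instead absorb those forced letters: the middle becomes $a\,\overline{\bu}\cdot xy\cdot \bu\, b$, and $A \cdot M^{\overline{\bu}}(AB-BA)M^{\bu}\cdot B = 2\, A M^{\overline{\bu}} U M^{\bu} B = 2 M^{\overline{a\bu}} U M^{\bu b}$, and now $a\bu$ starts with $a$ so this is nonnegative; that yields $m(\overline{\bw_1}ab\bw_2)\geq m(\overline{\bw_1}ba\bw_2)$. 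Symmetrically case 2 gives a strictly negative correction (strictness because $BUA$ has nonzero entries and the bordering vectors are strictly positive), and case 3, where one of $\bv_i$ is empty, reduces directly to $AUB$ or $BUA$ or the border picking out a strictly signed entry; case 4 (equality iff $\bw_1=\bu a$, $\bw_2 = \bu b$) is the degenerate subcase of case 1 where both $\bu_i$ are empty, so the correction is exactly $2\begin{pmatrix}1&0\end{pmatrix}M^{\overline{a\bu}}UM^{\bu b}\begin{pmatrix}0\\1\end{pmatrix}$, and I would compute that this top-right-type entry of a nonnegative matrix is in fact $0$ — pinning down the "if and only if".

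The main obstacle, and where I would spend the most care, is the bookkeeping of reversals and of which forced letter sits on which side of the inserted $ab$/$ba$: it is easy to conflate $\bw_i$ with $\overline{\bw_i}$ and mislabel cases 1 and 2. The clean way around this is to prove one master lemma — for any words $\bs,\bt$ with $\bs$ starting with $a$ (resp.\ $b$, resp.\ empty), the matrix $M^{\bar\bs}UM^{\bt}$ is entrywise $\geq 0$ (resp.\ $\leq 0$, resp.\ equal to $U$) provided $\bs,\bt$ share no common nonempty prefix — wait, that is false without a hypothesis relating $\bs$ and $\bt$; the correct hypothesis is that after stripping the common prefix the remaining first letters differ, which is exactly the setup here. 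I would state and prove that master lemma by the peeling induction above, and then the five cases of Lemma~\ref{lem:simpleFlip} follow by plugging in $\bs = (\text{first letter after }\bu\text{ in }\bw_1)\cdot\bu$ read appropriately and borders that are strictly positive vectors, with equality tracked to the unique degenerate configuration. Since the excerpt says this is the $\bz=\varepsilon$ special case of Lemma~\ref{lem:compositeFlip}, I would in fact prove the master lemma in the generality needed for both and cite it here.
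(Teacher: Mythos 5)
Your plan is essentially the paper's own argument: the difference of \(m\)-values factors through \(AB-BA=2U\), the identities \(AUA=BUB=U\) collapse \(M^{\overline{\bu}}UM^{\bu}\) to \(U\), and the sign-definite matrices \(AUB\geq 0\), \(BUA\leq 0\) together with the nonnegative border vectors give cases 1--3, with equality pinned to \((AUB)_{12}=0\) exactly when \(\bu_1,\bu_2\) are empty --- which is precisely how the paper argues, in the composite-flip generality \(AM^{\bz}B-BM^{\overline{\bz}}A=tU\), \(t>0\), reading off the simple flip as the \(\bz=\varepsilon\) case, as you yourself propose at the end. One small correction: your intermediate claim that \(M^{\overline{\bu}}UM^{\bu}\) is nonnegative or nonpositive according to the first letter of \(\bu\) is not right (this matrix always collapses to exactly \(U\)); the sign only appears after absorbing the two forced differing letters, i.e.\ from \(AUB\) or \(BUA\), which is what your corrected bookkeeping and final ``master lemma'' do, so the plan as ultimately stated is sound.
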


The idea now is to compose these simple flips to transform a word \(\bw_1\) into a new word \(\bw_2\). If the \(m\)-value varies always in the same direction, we can conclude on the final variation. For example, if we want to transform the word \(a^2b^2\) into the word \(b^2a^2\), then
\begin{align*}
	m(aabb) = 12 & = m(abab)  = 12 && \text{by point 4} \\  
	& < m(baab) = 18  && \text{by point 3} \\
	& < m(baba) = 24 && \text{by point 3} \\
	& < m(bbaa) = 30 && \text{by point 2}.
\end{align*}
Consequently \(m(a^2b^2) < m(b^2a^2)\).

However this approach does not always succeed. Indeed, we will be interested in words of the form \(a\bp ab\bp ab \bp b\) where \(\bp \in \{a,b\}^\star\) is a palindrome. Can we conclude how the \(m\)-value varies by flipping the two `ab' to get \(a \bp ba \bp ba \bp b\)? Whatever the order of flips we choose, the \(m\)-value will vary differently at each step. There is a good reason for that: in fact we have 
\[
m(a\bp ab\bp ab \bp b) = m(a \bp ba \bp ba \bp b).
\]

To handle these words, the idea is to flip the two `ab' in the same time. In fact we will directly flip the factor \(ab \bp ab\) to get \(ba \bp ba\). More generally we will be able to flip the factor \(a \bz b\) into a new factor \(b \overline{\bz} a\) (with \(\bz \in \{a,b\}^\star\)), we will call this operation a {\em composite flip}.

The next lemma describes the difference in the matrices associated with these two factors.

\begin{lemma}\label{lem:diff} 
	If \(\bz\in \{a,b\}^\star\), then there exists \(t \in \NN\setminus\{0\}\) such that \(AM^{\bz} B - B M^{\overline{\bz}} A = t U\).
\end{lemma}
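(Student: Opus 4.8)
The plan is to prove the statement by induction on the length of $\bz$, using Property~\ref{prop:matAB} as the engine. The key algebraic fact is that $AUA = U$ and $BUB = U$, so conjugating $U$ by a palindrome-respecting pair of matrices is well-behaved; combined with $AB - BA = 2U$, this should let me track the difference $AM^{\bz}B - BM^{\overline{\bz}}A$ exactly. Write $D(\bz) \defeq AM^{\bz}B - BM^{\overline{\bz}}A$; I want to show $D(\bz) = tU$ for some positive integer $t$.

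First I would check the base case $\bz = \epsilon$: then $D(\epsilon) = AB - BA = 2U$ by the first item of Property~\ref{prop:matAB}, so $t = 2 > 0$. Next, for the inductive step I would peel off one letter from each end simultaneously, since the reversal $\overline{\bz}$ swaps the roles of the first and last letters. Concretely, write $\bz = x\bz'y$ with $x,y \in \{a,b\}$ and $\bz' \in \{a,b\}^\star$ (handling the odd-length-palindrome-core and length-one cases separately, where $\bz$ has a single letter or is empty after peeling). Then $\overline{\bz} = y\overline{\bz'}x$, so
\[
D(\bz) = AM^x M^{\bz'} M^y B - BM^y M^{\overline{\bz'}} M^x A.
\]
The goal is to factor this as $M^? \, D(\bz') \, M^?$ or a sum of such, plus a correction term that is a nonnegative (or nonpositive) integer multiple of $U$; then the sign bookkeeping forces $t > 0$. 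The cleanest route: when $x = y$, say $x = y = a$, I expect $AM^aM^{\bz'}M^aB - BM^aM^{\overline{\bz'}}M^aA$ to reduce — using $A = M^a$ and the identities — to something like $A\,D(\bz')\,A$ plus lower-order pieces built from $AUA = U$; when $x \ne y$, say $x = a, y = b$, the extra matrices $AUB$ and $BUA$ from Property~\ref{prop:matAB} (which are respectively nonnegative and nonpositive) enter and contribute the strictly positive increment to $t$.

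The main obstacle I anticipate is the bookkeeping in the mixed case $x \ne y$: one has to expand $AM^xM^{\bz'}M^yB$ and $BM^yM^{\overline{\bz'}}M^xA$, substitute $M^{\bz'}$ in terms of its relation to $M^{\overline{\bz'}}$ (there is no simple such relation in general, which is why one must keep the difference $D(\bz')$ intact rather than the individual products), and verify that everything not accounted for by an inductive $M\,D(\bz')\,M$ term collapses — via $AUA = U$, $BUB = U$, $AB - BA = 2U$, $AUB$, $BUA$ — into an integer multiple of $U$ with a controllable sign. I would organize this by first establishing, for each of the four choices of $(x,y)$, an identity of the form $D(x\bz'y) = M^{\alpha} D(\bz') M^{\beta} + c\,U$ with $c \ge 0$ an integer and $M^\alpha, M^\beta$ products of $A$'s and $B$'s (so that $M^\alpha(tU)M^\beta$ is again an integer multiple of $U$, positive since $\det$ considerations or direct computation of $M^\alpha U M^\beta$ against $U$ pin the sign); strict positivity of the resulting $t$ then follows because the base value $2$ is positive and every increment $c$ is nonnegative, with at least the base term surviving. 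If a single clean conjugation identity does not materialize, the fallback is to note that $U$ spans the $\mathrm{SL}_2$-invariant complement picked out by $J A J = B$, i.e. $D(\bz)$ automatically lies in $\ZZ U$ because $D(\bz)^T = -D(\overline{\bz})\big|_{\text{conjugated}}$ forces the symmetric part to vanish — then only positivity of $t$ needs the explicit sign argument above.
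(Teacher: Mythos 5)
Your plan has a genuine gap at its central step: the inductive identity $D(x\bz' y) = M^{\alpha} D(\bz') M^{\beta} + cU$ with $c \ge 0$ is never established, and in the mixed case it fails in the form you anticipate. Take $x=a$, $y=b$ and the natural grouping $M^{\alpha}=A$, $M^{\beta}=B$: then $D(a\bz' b) = A\,D(\bz')\,B + \bigl(ABM^{\overline{\bz'}}AB - B^{2}M^{\overline{\bz'}}A^{2}\bigr)$, and neither summand lies in $\ZZ U$. Already for $\bz'$ empty, $A\,D(\bz')\,B = 2AUB = 2\left(\begin{smallmatrix}1&0\\3&1\end{smallmatrix}\right)$ and the remainder is $\left(\begin{smallmatrix}-2&-18\\12&-2\end{smallmatrix}\right)$; only their sum is $18U$. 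This also undercuts your sign argument ``direct computation of $M^{\alpha}UM^{\beta}$ against $U$ pins the sign'': since $MUM^{T}=\det(M)\,U$, the product $M^{\alpha}UM^{\beta}$ is a multiple of $U$ only when $M^{\beta}$ is transpose-related to $M^{\alpha}$, which is exactly what fails for the pair $(A,B)$ — that is why $AUB$ and $BUA$ are nonnegative/nonpositive matrices rather than multiples of $U$ (they are the right tool in Lemma~\ref{lem:compositeFlip}, where they are sandwiched between vectors, but not here, where an exact identity in $\ZZ U$ is needed). The induction can be repaired by conjugating only with transpose-symmetric pairs, e.g. $D(a\bz' b) = A\,D(\bz')\,A + (Y-Y^{T})$ with $Y = A^{2}M^{\bz'}B(B-A)$, but then each correction term needs its own antisymmetry-plus-sign computation, i.e. you end up re-proving the direct argument inside an unnecessary induction.

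Your fallback is in fact the paper's proof, and it makes the induction superfluous: since $A$ and $B$ are symmetric, $(AM^{\bz}B)^{T} = BM^{\overline{\bz}}A$, so $D(\bz)^{T} = -D(\bz)$ and hence $D(\bz) = tU$ for some $t\in\ZZ$ (your formulation ``$D(\bz)^T = -D(\overline{\bz})$ conjugated'' is garbled; the antisymmetry is immediate). What remains missing in your proposal is positivity of $t$, which you route back to the unproven inductive bookkeeping; the paper gets it by computing a single entry: writing $M^{\bz} = \left(\begin{smallmatrix}\alpha&\beta\\\gamma&\delta\end{smallmatrix}\right)$, the top-right entry of $AM^{\bz}B - BM^{\overline{\bz}}A$ equals $\begin{pmatrix}1&1\end{pmatrix}M^{\bz}\begin{pmatrix}1\\1\end{pmatrix} - \begin{pmatrix}1&2\end{pmatrix}M^{\bz}\begin{pmatrix}2\\1\end{pmatrix} = -\alpha-\delta-3\gamma$, which is strictly negative because $M^{\bz}$ has nonnegative entries and $\alpha\delta = 1+\beta\gamma \ge 1$; comparing with the top-right entry $-1$ of $U$ gives $t = \alpha+\delta+3\gamma \in \NN\setminus\{0\}$. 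Adding that one-line computation to your antisymmetry observation closes the proof; the two-ended peeling induction is not needed.
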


\begin{proof}
	We have \((A M^{\bz} B)^T = B M^{\overline{\bz}} A\) so the difference is antisymmetric. Moreover, let us choose \( \begin{pmatrix} \alpha & \beta \\ \gamma & \delta \end{pmatrix} \defeq M^\bz \in \rm{SL}_2(\ZZ)\).
	\begin{align*}
		\begin{pmatrix} 1 & 0 \end{pmatrix} \left(A M^{\bz} B - B M^{\overline{\bz}} A\right)\begin{pmatrix} 0 \\ 1\end{pmatrix} 
		& =  \begin{pmatrix} 1 & 0\end{pmatrix} A M^{\bz} B \begin{pmatrix} 0 \\ 1\end{pmatrix} - \left(\begin{pmatrix} 0 & 1\end{pmatrix}A M^{\bz} B \begin{pmatrix} 1 \\ 0\end{pmatrix} \right)^T \\
		& = \begin{pmatrix} 1 & 1\end{pmatrix} M^{\bz} \begin{pmatrix} 1 \\ 1\end{pmatrix} - \begin{pmatrix} 1 & 2\end{pmatrix}M^{\bz} \begin{pmatrix} 2 \\ 1\end{pmatrix}  \\
		& = -\alpha-\delta-3\gamma \\
		& \leq 0
	\end{align*}
	and is non zero since \(M^{\bz}\) invertible
\end{proof}

\begin{figure}[h]
	\centering
	\includegraphics[scale=0.5]{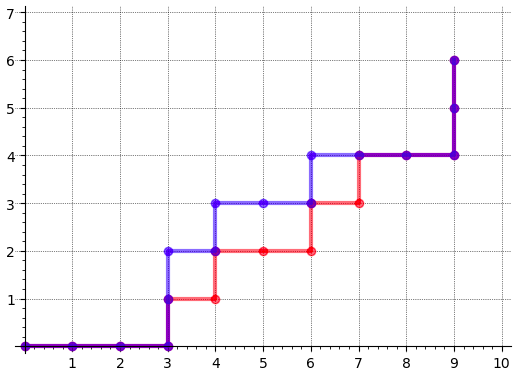}
	\caption{Composite flip between the red word `aaab {\em a baaba b} aabb' and the blue word `aaab {\em b abaab a} aabb'} 
	\label{fig:cFlip}
\end{figure}

We can prove now a generalization of Lemma~\ref{lem:simpleFlip}:
\begin{lemma}\label{lem:compositeFlip}
	Let \( \bz, \bw_1, \bw_2 \in \{a,b\}^\star \) and \(\bu\) being the largest common prefix of \(\bw_1\) and \(\bw_2\). One of these first three cases occur:
	\begin{enumerate}
		\item If \(\bw_1 = \bu a \bu_1\) and \(\bw_2 = \bu b \bu_2\), then \(m(\overline{\bw_1}a\bz b\bw_2) \geq m(\overline{\bw_1}b\overline{\bz}a\bw_2)\).
		\item If \(\bw_1 = \bu b \bu_1\) and \(\bw_2 = \bu a \bu_2\), then \(m(\overline{\bw_1}a\bz b\bw_2) < m(\overline{\bw_1}b\overline{\bz}a\bw_2)\). 
		\item If \(\bw_1 = \bu \) or \(\bw_2 = \bu\), then  \(m(\overline{\bw_1}a\bz b\bw_2) < m(\overline{\bw_1}b\overline{\bz}a\bw_2)\).
		\item Moreover, (\(\bw_1 = \bu a\) and \(\bw_2 =\bu b\)) if and only if \(m(\overline{\bw_1}a\bz b\bw_2) = m(\overline{\bw_1}b\overline{\bz}a\bw_2)\). 
	\end{enumerate} 
\end{lemma}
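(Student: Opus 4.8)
The plan is to reduce everything to a single matrix identity comparing the matrices $M^{\overline{\bw_1}a\bz b\bw_2}$ and $M^{\overline{\bw_1}b\overline{\bz}a\bw_2}$. Write $P \defeq M^{\overline{\bw_1}}$ on the left and $Q \defeq M^{\bw_2}$ on the right, so that $m(\overline{\bw_1}a\bz b\bw_2) = \begin{pmatrix}1&0\end{pmatrix} P (A M^{\bz} B) Q \begin{pmatrix}0\\1\end{pmatrix}$ and likewise with $B M^{\overline{\bz}} A$ in the middle. By Lemma~\ref{lem:diff} the middle factors differ by $tU$ with $t>0$, so the difference of the two $m$-values is exactly $t \cdot \begin{pmatrix}1&0\end{pmatrix} P U Q \begin{pmatrix}0\\1\end{pmatrix}$. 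Thus the whole lemma comes down to determining the sign of the scalar $\begin{pmatrix}1&0\end{pmatrix} P U Q \begin{pmatrix}0\\1\end{pmatrix}$ in terms of the common-prefix structure of $\bw_1$ and $\bw_2$.

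Next I would analyze $P U Q$ by peeling off the common prefix $\bu$. Since $\overline{\bw_1} = \overline{\bu_1}\,x\,\overline{\bu}$ when $\bw_1 = \bu x \bu_1$, we have $P = M^{\overline{\bu_1}} M^x M^{\overline{\bu}}$, and similarly $Q = M^{\bu} M^y M^{\bu_2}$ when $\bw_2 = \bu y \bu_2$. The key observation is that $M^{\overline{\bu}} U M^{\bu}$ simplifies dramatically: using $AUA = U$ and $BUB = U$ from Property~\ref{prop:matAB}, an easy induction on $|\bu|$ gives $M^{\overline{\bu}} U M^{\bu} = U$ for every word $\bu$ (here one uses that $M^{\overline{\bu x}} U M^{\bu x} = M^x (M^{\overline{\bu}} U M^{\bu}) M^x$). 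So $P U Q = M^{\overline{\bu_1}} (M^x U M^y) M^{\bu_2}$, and the four remaining cases of the lemma correspond precisely to the four possibilities for the pair $(x,y)$ of first differing letters (or to one of the $\bu_i$ being absent when $\bw_1$ or $\bw_2$ equals $\bu$). By the last two bullets of Property~\ref{prop:matAB}, $M^a U M^b = AUB$ is nonnegative and $M^b U M^a = BUA$ is nonpositive; and since $A$, $B$ have strictly positive entries while $M^{\overline{\bu_1}}, M^{\bu_2}$ have nonnegative entries (and, when nonempty, a strictly positive top-left / bottom-right entry respectively), multiplying out $\begin{pmatrix}1&0\end{pmatrix} M^{\overline{\bu_1}} (AUB) M^{\bu_2}\begin{pmatrix}0\\1\end{pmatrix}$ yields a strictly positive quantity unless $\bu_1$ and $\bu_2$ are both empty, in which case $AUB$ has top-right entry $0$ and we get equality. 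The nonpositive case is symmetric, and when $x = a$, $y = b$ but not both $\bu_i$ empty we still get strict inequality in the first direction — which is exactly what case~1 (a non-strict $\geq$) and case~4 (equality iff $\bw_1=\bu a$, $\bw_2=\bu b$) assert.

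For the degenerate cases where $\bw_1 = \bu$ (so $\overline{\bw_1} = \overline{\bu}$ and there is no letter $x$) or $\bw_2 = \bu$, I would handle them directly: then the relevant middle factor is $M^{\overline{\bu}} U M^y M^{\bu_2}$ or $M^{\overline{\bu_1}} M^x U M^{\bu}$, and one shows with the same positivity bookkeeping that the scalar is strictly positive, matching case~3. The one genuinely delicate bit is the precise bookkeeping of which entries of $M^{\overline{\bu_1}}$ and $M^{\bu_2}$ are guaranteed strictly positive versus merely nonnegative — that is, checking that a nonempty product of $A$'s and $B$'s has a strictly positive first column (for the left factor, so that $\begin{pmatrix}1&0\end{pmatrix} M^{\overline{\bu_1}}$ is a positive row vector) and a strictly positive bottom row (for the right factor) — so that the "nonnegative times nonnegative" estimates actually turn strict in exactly the claimed situations. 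This is the step I expect to require the most care, but it is a routine induction: $A$ and $B$ are positive matrices, and left/right multiplication by a nonnegative matrix with no zero row/column preserves positivity of the relevant vector. Everything else is linear algebra assembled from Lemma~\ref{lem:diff} and Property~\ref{prop:matAB}.
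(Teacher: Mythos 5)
Your strategy is the same as the paper's: reduce the difference $m(\overline{\bw_1}a\bz b\bw_2)-m(\overline{\bw_1}b\overline{\bz}a\bw_2)$ via Lemma~\ref{lem:diff} to $t\cdot\begin{pmatrix}1&0\end{pmatrix}M^{\overline{\bw_1}}\,U\,M^{\bw_2}\begin{pmatrix}0\\1\end{pmatrix}$ with $t>0$, cancel the common prefix through $M^{\overline{\bu}}UM^{\bu}=U$ (induction from $AUA=BUB=U$), and read the sign off $AUB$, $BUA$, $U$ together with the positivity of word matrices; exhaustiveness of the cases then gives the reverse implication in point 4. This is exactly how the paper argues, and the incidental bookkeeping you worry about is lighter than you fear, since any nonempty product of $A$'s and $B$'s is entrywise strictly positive.

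Two of your sign/strictness claims are, however, wrong as written. First, in the degenerate case $\bw_1=\bu$ (resp.\ $\bw_2=\bu$) the scalar is $\begin{pmatrix}1&0\end{pmatrix}U M^{\bu_2}\begin{pmatrix}0\\1\end{pmatrix}$, which equals minus the bottom-right entry of $M^{\bu_2}$ (resp.\ $\begin{pmatrix}1&0\end{pmatrix}M^{\overline{\bu_1}}U\begin{pmatrix}0\\1\end{pmatrix}$, minus the top-left entry of $M^{\overline{\bu_1}}$), hence strictly \emph{negative}; since in your own setup the difference of $m$-values is $t$ times this scalar, negativity is precisely what case~3 needs, so your claim that the scalar is ``strictly positive, matching case~3'' has the sign backwards and, taken literally, would prove the reverse inequality. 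Second, ``the nonpositive case is symmetric'' glosses over the one asymmetry that matters: case~2 must be a strict inequality even when $\bu_1$ and $\bu_2$ are both empty, and this holds only because the top-right entry of $BUA=\begin{pmatrix}-1&-3\\0&-1\end{pmatrix}$ is $-3\neq 0$, whereas the equality case on the other side comes from the top-right entry of $AUB$ being $0$; a genuinely symmetric argument would admit an equality case here and would ruin the ``only if'' direction of point~4. Both defects are local and are fixed by the same one-line computations the paper performs for its cases 2 and 3, but as submitted these steps do not establish the statement.
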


\begin{proof}
	First let us show the implications of the first three cases (and the necessary condition of the last point).
	
	In the first case, using Property~\ref{prop:matAB} and Lemma~\ref{lem:diff},
	\begin{align*}
		m(\overline{\bw_1}a\bz b\bw_2) - m(\overline{\bw_1}b\overline{\bz}a\bw_2)  & = \begin{pmatrix} 1 & 0\end{pmatrix} M^{\overline{\bu_1}a\overline{\bu}} (AM^{\bz}B-BM^{\overline{\bz}}A) M^{\bu b\bu_2}	\begin{pmatrix} 0 \\ 1\end{pmatrix}  \\
		& = t \begin{pmatrix} 1 & 0\end{pmatrix} M^{\overline{\bu_1}} A M^{\overline{\bu}} U M^{\bu} B M^{\bu_2}	\begin{pmatrix} 0 \\ 1\end{pmatrix}  \\
		& = t \begin{pmatrix} 1 & 0\end{pmatrix} M^{\overline{\bu_1}} A U B M^{\bu_2}	\begin{pmatrix} 0 \\ 1\end{pmatrix}  \\
		& = t \begin{pmatrix} 1 & 0\end{pmatrix} M^{\overline{\bu_1}} \begin{pmatrix} 1 & 0\\ 3 &1\end{pmatrix} M^{\bu_2}	\begin{pmatrix} 0 \\ 1\end{pmatrix}  \\
		& \geq 0
	\end{align*}
	and the equality occurs if and only if \(\bu_1\) and \(\bu_2\) are the empty words.
	
	In the second case,
	\begin{align*}
		m(\overline{\bw_1}a\bz b\bw_2) - m(\overline{\bw_1}b\overline{\bz}a\bw_2)  & =  \begin{pmatrix} 1 & 0\end{pmatrix} M^{\overline{\bu_1}b\overline{\bu}} (AM^\bz B-BM^{\overline{\bz}}A) M^{\bu a\bu_2}	\begin{pmatrix} 0 \\ 1\end{pmatrix}  \\
		& = t \begin{pmatrix} 1 & 0\end{pmatrix} M^{\overline{\bu_1}} BUA M^{\bu_2}	\begin{pmatrix} 0 \\ 1\end{pmatrix}  \\
		& = t \begin{pmatrix} 1 & 0\end{pmatrix} M^{\overline{\bu_1}} \begin{pmatrix} -1 & -3\\ 0 &-1\end{pmatrix} M^{\bu_2}	\begin{pmatrix} 0 \\ 1\end{pmatrix}  \\
		& < 0.
	\end{align*}
	Let us assume now that \(\bu =\bw_1\), then, by defining \(\bw_2 = \bu\bu_2\),
	\begin{align*}
		m(\overline{\bw_1}a\bz b\bw_2) - m(\overline{\bw_1}b\overline{\bz}a\bw_2)  & = t \begin{pmatrix} 1 & 0\end{pmatrix}  U M^{\bu_2}	\begin{pmatrix} 0 \\ 1\end{pmatrix}  \\
		& = t \begin{pmatrix} 0 & -1\end{pmatrix}  M^{\bu_2}	\begin{pmatrix} 0 \\ 1\end{pmatrix}  \\
		& < 0.
	\end{align*}
	The case \(\bu = \bw_2\) is similar. By defining \(\bw_1 = \bu\bu_1\) we get
	\begin{align*}
		m(\overline{\bw_1}a\bz b\bw_2) - m(\overline{\bw_1}b\overline{bz}a\bw_2)  & = t \begin{pmatrix} 1 & 0\end{pmatrix}  M^{\overline{\bu_1}} U	\begin{pmatrix} 0 \\ 1\end{pmatrix}  \\
		& = t \begin{pmatrix} 1 & 0\end{pmatrix}  M^{\overline{\bu_1}}	\begin{pmatrix} -1 \\ 0\end{pmatrix}  \\
		& < 0.
	\end{align*}
	
	Finally, one can notice that if none of these cases occur, then \(\bu\) is a strict prefix of \(\bw_1=\bu\bu_1\) and of \(\bw_2=\bu\bu_2\). Furthermore, \(\bu_1\) and \(\bu_2\) have to start by the same letter contradicting the fact that \(\bu\) is the largest common prefix. That shows we met all cases which could occur. Furthermore, it also implies the sufficient condition of the fourth point.
\end{proof}

In particular if \(\bp\) is a palindrome, applying the last point with \(\bw_1 = \bp a\) and \(\bw_2 = \bp b\), we directly get
\[
m(a\bp ab \bp ab \bp b) = m(a \bp ba \bp ba \bp b)
\]
as we claimed before.

In fact the last inequality can also be deduced from the next lemma 
\begin{lemma}\label{lem:ab}
	Let \(\bw \in \setw\). Then \(m(a\bw b) = m(a \overline{\bw} b)\).
\end{lemma}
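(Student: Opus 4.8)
The plan is to rewrite both quantities as one and the same symmetric bilinear form, evaluated once at \(M^{\bw}\) and once at \(M^{\overline{\bw}}\). By definition \(m(a\bw b)=\begin{pmatrix}1&0\end{pmatrix} A M^{\bw} B \begin{pmatrix}0\\1\end{pmatrix}\). Since \(\begin{pmatrix}1&0\end{pmatrix}A\) is the first row \(\begin{pmatrix}1&1\end{pmatrix}\) of \(A\) and \(B\begin{pmatrix}0\\1\end{pmatrix}\) is the second column \(\begin{pmatrix}1\\1\end{pmatrix}\) of \(B\), this collapses to \(m(a\bw b)=\begin{pmatrix}1&1\end{pmatrix}M^{\bw}\begin{pmatrix}1\\1\end{pmatrix}\); the identical computation gives \(m(a\overline{\bw}b)=\begin{pmatrix}1&1\end{pmatrix}M^{\overline{\bw}}\begin{pmatrix}1\\1\end{pmatrix}\).

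Next I would record that \(M^{\overline{\bw}}=\left(M^{\bw}\right)^{T}\). This follows from the symmetry of \(A\) and \(B\) by a one-line induction on \(|\bw|\): writing \(\bw = x\bu\) with \(x\in\{a,b\}\), so that \(\overline{\bw}=\overline{\bu}\,x\), we get \(M^{\overline{\bw}} = M^{\overline{\bu}}M^{x} = \left(M^{\bu}\right)^{T}\left(M^{x}\right)^{T} = \left(M^{x}M^{\bu}\right)^{T} = \left(M^{\bw}\right)^{T}\); this is exactly the computation in the proof of Claim~\ref{clm:E-matrices} with the letter exchange \(E\) dropped (so the matrices \(J\) disappear).

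Finally, \(\begin{pmatrix}1&1\end{pmatrix}M^{\bw}\begin{pmatrix}1\\1\end{pmatrix}\) is a \(1\times 1\) matrix, hence equal to its own transpose \(\begin{pmatrix}1&1\end{pmatrix}\left(M^{\bw}\right)^{T}\begin{pmatrix}1\\1\end{pmatrix}\) (the vector \(\begin{pmatrix}1&1\end{pmatrix}\) being fixed by transposition). Chaining the three observations yields \(m(a\bw b)=\begin{pmatrix}1&1\end{pmatrix}M^{\bw}\begin{pmatrix}1\\1\end{pmatrix}=\begin{pmatrix}1&1\end{pmatrix}\left(M^{\bw}\right)^{T}\begin{pmatrix}1\\1\end{pmatrix}=\begin{pmatrix}1&1\end{pmatrix}M^{\overline{\bw}}\begin{pmatrix}1\\1\end{pmatrix}=m(a\overline{\bw}b)\), which is the claim.

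I do not expect any genuine obstacle here: the statement is a short symmetry argument, the only point needing a little care being the transpose bookkeeping, which is routine and already appears in the proofs of Claim~\ref{clm:E-matrices} and Lemma~\ref{lem:SymFD}. As a sanity check, taking \(\bw = \bp ab\bp ab\bp\) with \(\bp\) a palindrome (so \(\overline{\bw}=\bp ba\bp ba\bp\)) recovers the identity \(m(a\bp ab\bp ab\bp b)=m(a\bp ba\bp ba\bp b)\) used above.
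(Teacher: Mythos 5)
Your argument is correct and is essentially the paper's: both proofs hinge on \(A\) and \(B\) being symmetric, hence \(M^{\overline{\bw}}=\left(M^{\bw}\right)^{T}\), and on the fact that sandwiching between \(\begin{pmatrix}1&0\end{pmatrix}A\) and \(B\begin{pmatrix}0\\1\end{pmatrix}\) annihilates the antisymmetric part. The paper packages this by writing \(M^{\bw}-M^{\overline{\bw}}=\ell U\) and reading off the zero top-right entry of \(AUB\) from Property~\ref{prop:matAB}, whereas you absorb \(A\) and \(B\) into the coinciding vectors \(\begin{pmatrix}1&1\end{pmatrix}\) and \(\begin{pmatrix}1\\1\end{pmatrix}\), making the scalar manifestly transpose-invariant -- the same one-line computation, marginally more self-contained on your side since Property~\ref{prop:matAB} is not needed.
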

\begin{proof}
	The matrix \(M^{\bw} - M^{\overline{\bw}}\) is antisymmetric, so of the form \(\ell U\) where \(\ell \in \ZZ\). Hence,
	\[
		m(a\bw b) - m(a \overline{\bw}b) = \ell \begin{pmatrix}
			1 & 0
		\end{pmatrix}
		AUB \begin{pmatrix}
			0 \\ 1
		\end{pmatrix} =0.
	\]
\end{proof}

\subsection{Below sets and hulls}

\begin{figure}
	\centering
	\begin{subfigure}{.4\textwidth}
		\centering
		\includegraphics[scale=0.4]{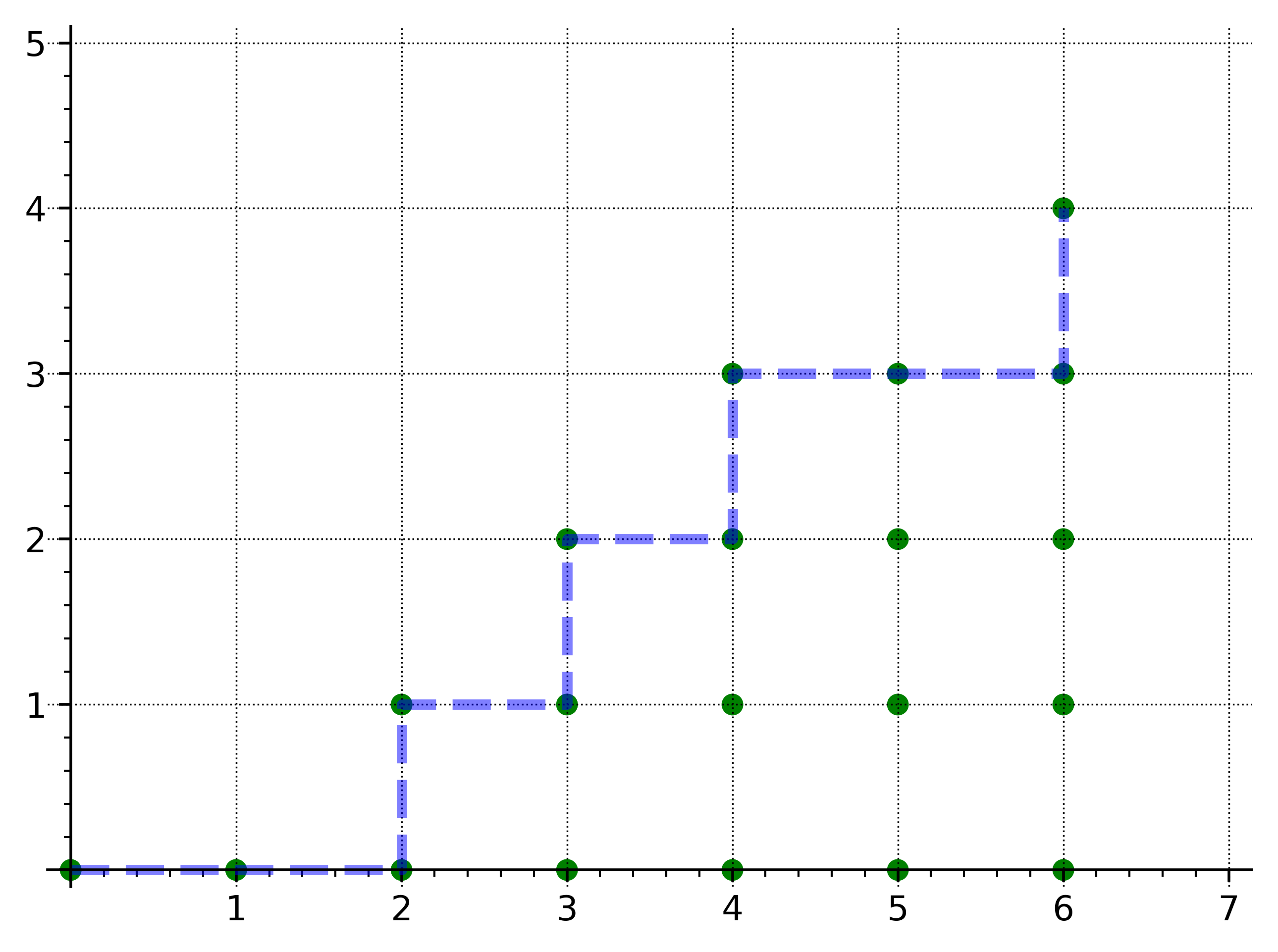}
		\caption{The green points form the set \(\bel(aabababaab)\). \\
			 Similarly, the dotted blue line is the hull of the \(6\)-packed set formed by the green points.} 
		 \label{fig:bS1}
	\end{subfigure}%
	\begin{subfigure}{.15\textwidth}
		\ 
	\end{subfigure}
	\begin{subfigure}{.4\textwidth}
		\centering
		\includegraphics[scale=0.4]{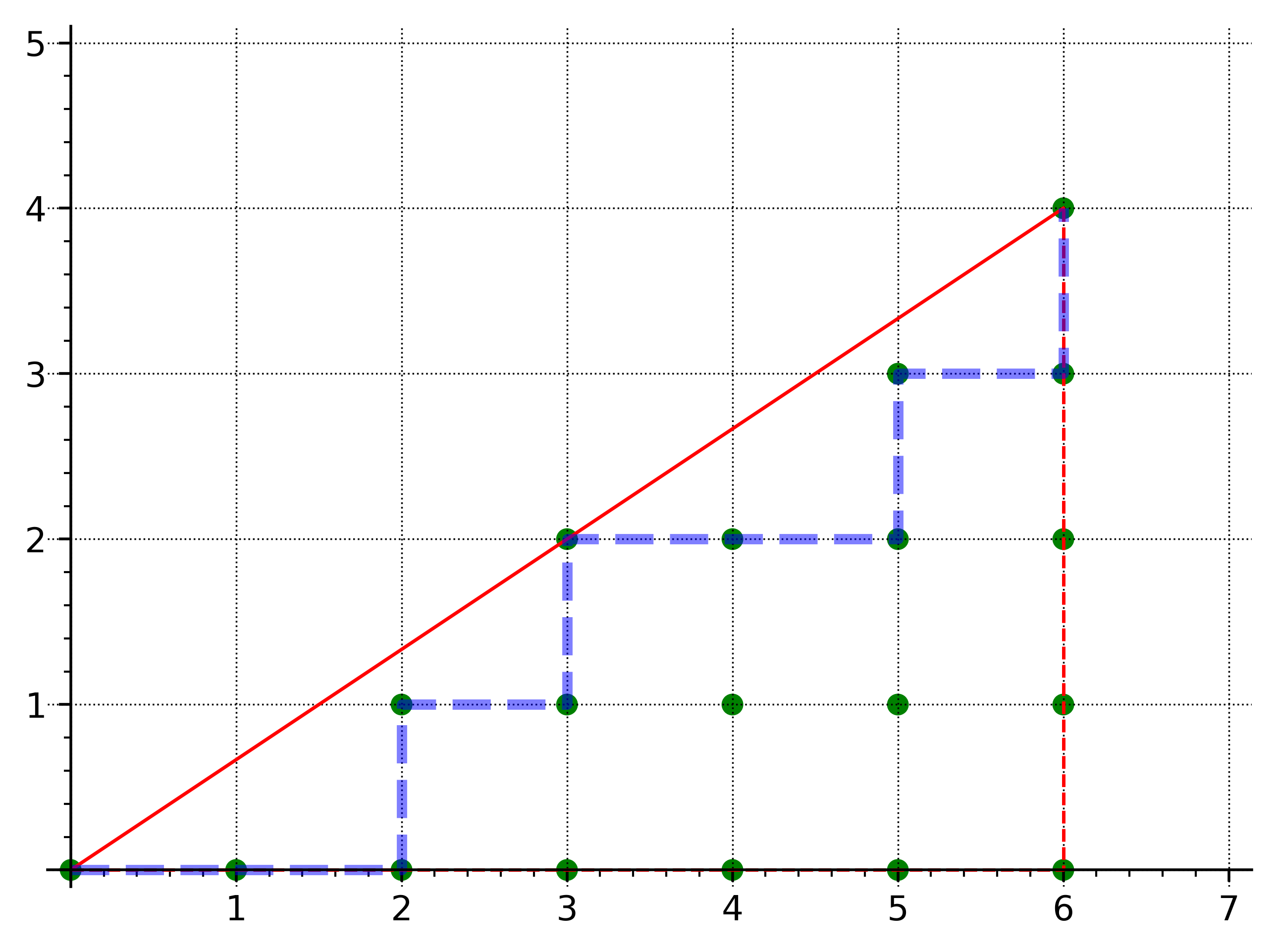}
		\caption{The green points form the set \(T_{4,6} = \bel(c(4,6))\). \\
		The blue path is \(c(4,6)\) which is also \(\hull(T_{4,6})\).} 
		\label{fig:bS2}
	\end{subfigure}
	\caption{Examples of \(\bel\) sets}
	\label{fig:belSets}
\end{figure}

The notation \([d]\) will correspond to the integer interval \(\{n \in \NN \mid n \leq d\}\). A subset of \(\NN^2\) is said to be of {\em width} \(d\) if it is a subset of \([d]\times \NN\).

A subset \(S \subseteq \NN^2\) is called {\em \(d\)-packed} if it is a finite set of width \(d\) satisfying the three following conditions 
\begin{itemize}
	\item \((0,0) \in S\),
	\item \(\forall (u,v) \in[d-1]\times\NN, (u,v) \in S\implies (u+1,v)\in S \),
	\item and  \(\forall (u,v) \in[d]\times(\NN\setminus\{0\}), ((u,v) \in S)\implies (u,v-1)\in S \).
\end{itemize}

A set will be called {\em packed}, if it is \(d\)-packed for some \(d \in \NN\). We notice that given a packed set \(S\), there exists a unique \(d\) such that it is \(d\)-packed: \(
	d =\max_{(u,v) \in S}(u)
\).

We show that finite words on a two letters alphabet are in one-to-one correspondence with packed sets.

Let \(\bw\) be a word with \(d\) `a' and \(n\) `b'. We saw that \(\bw\) can be identified as a path from \((0,0)\) to \((d,n)\) in the \(\NN^2\) lattice. We define \(\bel(\bw)\) as the set of points below the word \(\bw\). More formally,
\[
	\bel(\bw) = \{ (u,v) \in [d]\times \NN \mid \exists p \geq v \text{ such that }\bw\text{ goes through }(u,p)\}.
\]

For example, in Figure~\ref{fig:bS1}, the green points are the elements of the set \(\bel(aabababaab)\).

The next claim is immediate.
\begin{claim}
	Let \(\bw \in \{a,b\}^\star\). Then \(\bel(\bw)\) is a packed set.
\end{claim}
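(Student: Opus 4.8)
The plan is to make explicit the ``staircase'' structure of the path underlying $\bw$. Write $d$ and $n$ for the number of `a's and `b's in $\bw$, so that the path of $\bw$ is a monotone lattice path from $(0,0)$ to $(d,n)$ in which each step increases exactly one coordinate by $1$. First I would observe that for every column index $u \in [d]$ the path visits at least one point of the form $(u,p)$ (it starts in column $0$, ends in column $d$, and the first coordinate increases by $1$ at each `a'-step and is unchanged at each `b'-step), so one may define $h(u) \defeq \max\{p \in \NN : \bw \text{ goes through } (u,p)\}$. Since both coordinates are non-decreasing along the path, $h$ is a non-decreasing function of $u$, and $0 \le h(u) \le n$ for every $u$ (the path has exactly $n$ vertical steps, hence never rises above height $n$).

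The key reduction is then the identity $\bel(\bw) = \{(u,v) \in \NN^2 : u \le d \text{ and } v \le h(u)\}$. For the inclusion ``$\subseteq$'', if $(u,v) \in \bel(\bw)$ then $u \le d$ and some $p \ge v$ has $(u,p)$ on the path, whence $v \le p \le h(u)$. For ``$\supseteq$'', if $u \le d$ and $v \le h(u)$, then $p \defeq h(u)$ witnesses membership, since $(u,h(u))$ lies on the path and $h(u) \ge v$. Granting this identity, the defining conditions of a $d$-packed set fall out immediately: every element $(u,v)$ satisfies $u \le d$ and $v \le h(u) \le n$, so $\bel(\bw) \subseteq [d]\times[n]$ is finite of width $d$; we have $(0,0) \in \bel(\bw)$ because $h(0) \ge 0$; if $(u,v) \in \bel(\bw)$ with $u \le d-1$ then $v \le h(u) \le h(u+1)$ by monotonicity of $h$, so $(u+1,v) \in \bel(\bw)$; and if $(u,v) \in \bel(\bw)$ with $v \ge 1$ then $v-1 \le v \le h(u)$, so $(u,v-1) \in \bel(\bw)$.

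There is no real obstacle here, which is precisely why the statement is called immediate. The only point that merits a sentence of justification is that $h$ is well-defined and non-decreasing, i.e.\ that a monotone lattice path meets every intermediate column and never descends; this is where one uses that `a'-steps and `b'-steps both move in the positive direction. Everything else is a routine unwinding of the definition of $\bel(\bw)$.
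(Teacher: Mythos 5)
Your proof is correct: the paper states this claim without proof (calling it immediate), and your argument—introducing the column-height function \(h(u)\), observing it is well-defined and non-decreasing by monotonicity of the path, rewriting \(\bel(\bw)\) as \(\{(u,v): u\le d,\ v\le h(u)\}\), and then checking the three packed-set conditions—is precisely the routine verification the authors intend the reader to supply. Nothing needs to be added or changed.
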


In fact this transformation is invertible, it is possible to retrieve the word \(\bw\) given the set \(\bel(\bw)\).

Indeed, given a packed set \(S\), let \(\hull(S)\) be the upper hull of \(S\)
\[
	\hull(S) = \{(x,y) \in S \mid (x-1,y+1) \notin S\}.
\]

Back to the Figure~\ref{fig:bS1}, we can also see the dotted blue line as the hull of the set of the green points.

Consequently
\begin{lemma}
	The mapping \(\bel : \{a,b\}^\star \rightarrow \{\text{packed sets}\}\) is one-to-one of inverse \(\bord\).
\end{lemma}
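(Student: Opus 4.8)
The plan is to factor both maps through a common ``column-height'' coordinate system on words and on packed sets. First I would record that a $d$-packed set $S$ is completely determined by its height function $h_S\colon [d]\to\NN$, $h_S(u)=\max\{v : (u,v)\in S\}$. This is well defined because every column is nonempty: $(0,0)\in S$ and the second packed axiom pushes points to the right, so $h_S(u)\le h_S(u+1)$ and all columns $0,\dots,d$ meet $S$. The third axiom forces each column $\{v : (u,v)\in S\}$ to be the interval $\{0,\dots,h_S(u)\}$. Conversely, any nondecreasing $h\colon [d]\to\NN$ is the height function of the packed set $\{(u,v) : 0\le u\le d,\ 0\le v\le h(u)\}$. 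Hence $S\mapsto h_S$ is a bijection from packed sets of width $d$ onto nondecreasing functions $[d]\to\NN$.

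Next I would do the same on the word side. Identifying a word $\bw\in\setw$ having $d$ letters $a$ with its lattice path from $(0,0)$, set $h_\bw(u)$ to be the $y$-coordinate at which the path leaves column $u$ for $u<d$, and $h_\bw(d)$ the total number of $b$'s. Since the path never steps downward, $h_\bw$ is nondecreasing, and $\bw\mapsto h_\bw$ is a bijection onto nondecreasing functions $[d]\to\NN$, with inverse sending $h$ to $b^{h(0)}\,a\,b^{h(1)-h(0)}\,a\cdots a\,b^{h(d)-h(d-1)}$. Unwinding the definition of \(\bel\), the highest point of the path in column $u$ is exactly $h_\bw(u)$, so $(u,v)\in\bel(\bw)$ iff $0\le v\le h_\bw(u)$; that is, $\bel(\bw)$ is the packed set (already known to be packed by the preceding Claim) with height function $h_\bw$, i.e. $h_{\bel(\bw)}=h_\bw$.

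It remains to identify \(\hull\) with the inverse of all this. For a packed set $S$ with height function $h$ (adopting the convention $h(-1)=0$), unwinding $\hull(S)=\{(x,y)\in S : (x-1,y+1)\notin S\}$ gives $(x,y)\in\hull(S)$ iff $h(x-1)\le y\le h(x)$. I would then verify that this point set is a connected lattice path from $(0,0)$ to $(d,h(d))$: its points with first coordinate $x$ form the vertical run from $(x,h(x-1))$ to $(x,h(x))$, and $(x,h(x))$ is adjacent to $(x+1,h(x))$, so consecutive runs are joined by one horizontal step. Reading this path as a word gives exactly $b^{h(0)}\,a\,b^{h(1)-h(0)}\,a\cdots a\,b^{h(d)-h(d-1)}$, the word with height function $h_S$. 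Combining the three bijections: $\hull(\bel(\bw))$ is the word with height function $h_{\bel(\bw)}=h_\bw$, hence equals $\bw$; and $\bel(\hull(S))$ is the packed set with height function $h_{\hull(S)}=h_S$, hence equals $S$. Therefore $\bel$ is a bijection with inverse $\hull$.

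I expect no genuine obstacle: the content is bookkeeping once the height-function dictionary is in place. The one point that needs care is checking that $\hull(S)$, which is defined purely set-theoretically, is actually a connected lattice path (hence lies in the image of the path-to-point-set identification, so that ``$\hull(S)$ is a word'' is literally meaningful); in particular the boundary column $x=0$ in the inequality $h(x-1)\le y\le h(x)$ must be handled via the convention $h(-1)=0$.
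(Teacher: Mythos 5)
Your argument is correct: the paper offers no proof of this lemma (it is stated as an immediate consequence of the definitions), and your height-function dictionary is exactly the bookkeeping that justifies it, including the one genuinely checkable point, namely that \(\hull(S)=\{(x,y): h(x-1)\le y\le h(x)\}\) is the vertex set of a monotone lattice path and hence corresponds to a word. Nothing is missing; the boundary case \(x=0\) via \(h(-1)=0\) and the identification of a monotone path with its point set are handled properly.
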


Let us apply this construction for \(c(n,d)\) words. Given \((d,n) \in \NN^2\), let \(T_{n,d}\) be the triangle delimited by \((0,0)\), \((d,0)\) and \((d,n)\).
\[
	T_{n,d} = \{(u,v) \in \NN^2 \mid u \leq d \text{ and } dv \leq un\}.
\]

We have
\begin{claim}
	Let \((d,n) \in \NN^2\), then \(\bel(c(n,d)) = T_{n,d}\).
\end{claim}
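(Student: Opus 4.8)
The plan is to establish the two inclusions separately, using that $c(n,d)$ — being a generalized Christoffel path, i.e.\ a power of an ordinary Christoffel word — is a monotone lattice path from $(0,0)$ to $(d,n)$ built only from rightward and upward unit steps, together with the three properties defining it. I treat the case $d\ge 1$ (for $d=0$ one has $c(n,0)=b^n$ and the statement is immediate).

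The inclusion $\bel(c(n,d))\subseteq T_{n,d}$ is direct: if $(u,v)\in\bel(c(n,d))$, then $u\le d$ and some point $(u,p)$ with $p\ge v$ lies on the path; the first defining property says the path stays weakly below the segment from $(0,0)$ to $(d,n)$, hence $dp\le un$, and therefore $dv\le dp\le un$, i.e.\ $(u,v)\in T_{n,d}$.

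For $T_{n,d}\subseteq\bel(c(n,d))$, the key claim is that at each abscissa $0\le u\le d$ the path attains the height $\lfloor un/d\rfloor$, which is the largest height lying weakly below the segment at that abscissa. Granting this, any $(u,v)\in T_{n,d}$ has $u\le d$ and $v\le un/d$, hence $v\le\lfloor un/d\rfloor$ since $v\in\NN$, so $(u,v)\in\bel(c(n,d))$ because the path visits $(u,\lfloor un/d\rfloor)$. To prove the claim, let $p$ be the largest integer with $(u,p)$ on the path; this exists because the path runs monotonically from $(0,0)$ to $(d,n)$, and for $u=d$ we already have $p=n=\lfloor dn/d\rfloor$. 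The first property gives $p\le\lfloor un/d\rfloor$; assume $p<\lfloor un/d\rfloor$ for contradiction. Then $p+1\le\lfloor un/d\rfloor\le un/d$, so $(u,p+1)$ sits weakly below the segment and is not on the path (the path's points at abscissa $u$ form the contiguous vertical run ending at $(u,p)$). If $p+1=un/d$ then $(u,p+1)$ is a lattice point of the segment, forced onto the path by the second property — contradiction; otherwise $(u,p+1)$ lies strictly between the path and the segment, contradicting the third property. Hence $p=\lfloor un/d\rfloor$.

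I expect the only delicate point to be the last step: turning the informal geometric phrase ``the region enclosed by the path and the segment'' into something one can concretely contradict with the lattice point $(u,p+1)$. An alternative that sidesteps this uses the bijection $\bel$ with inverse $\hull$: it suffices to check that $T_{n,d}$ is packed and that $\hull(T_{n,d})=c(n,d)$ — the latter being the standard description of the Christoffel path of slope $n/d$ as the upper staircase of the triangle $T_{n,d}$ — whence $\bel(c(n,d))=\bel(\hull(T_{n,d}))=T_{n,d}$. In both approaches the arithmetic core is the identity ``(maximal height of $c(n,d)$ at abscissa $u$)$\,=\lfloor un/d\rfloor$'', which via $c(n,d)=(\chris{n/d})^{\gcd(n,d)}$ reduces at once to the coprime case.
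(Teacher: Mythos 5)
Your argument is correct, and it is worth noting that the paper offers no proof at all for this claim: it is stated as an immediate consequence of the definitions, so your two-inclusion verification is a legitimate filling-in rather than a divergence from the paper's route. The forward inclusion is exactly the first defining property of \(c(n,d)\), and your key arithmetic fact --- that at every abscissa \(u\le d\) the path attains the height \(\lfloor un/d\rfloor\) --- is proved correctly: if the maximal height \(p\) at abscissa \(u\) were smaller, the lattice point \((u,p+1)\) would either lie on the segment (forced onto the path by the second property) or strictly between the path and the segment (excluded by the third property), and your use of the generalized definition means no separate reduction to the coprime case is needed. Your alternative sketch via \(\hull(T_{n,d})=c(n,d)\) and the bijectivity of \(\bel\) is equally in the spirit of the paper, which introduces \(\hull\) as the inverse of \(\bel\) just before this claim. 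One small caveat: the degenerate case \(d=0\) is not "immediate" under the paper's formula, since \(T_{n,0}=\{(u,v)\in\NN^2 \mid u\le 0 \text{ and } 0\le un\}=\{0\}\times\NN\) is infinite while \(\bel(c(n,0))=\bel(b^n)\) is finite; this is an artifact of the paper's algebraic definition of the triangle rather than a flaw in your argument, but you should either exclude \(d=0\) or read \(T_{n,0}\) as the intended segment \(\{(0,v)\mid v\le n\}\) instead of asserting the case is trivial.
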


In Figure~\ref{fig:bS2}, the hull is \(c(4,6)\) and the set of below points is \(T_{4,6}\).

\section{Fixed parameter conjectures}

The fixed denominator and numerator conjectures can be proved using only simple flips. In fact they are direct consequences of Lemma~\ref{lem:flattening} which states that the \(m\)-value decreases when we flatten a word.

\subsection{Flattenings}

Let \(\bw \in \{a,b\}^\star\) be a path in \(\NN^2\) from  \((0,0)\) to \((d,n)\) with \(d \geq 1\). To each point \((x,y) \in [d]\times \NN\), we associate its {\em algebraic vertical distance} to the line linking \((0,0)\) and \((d,n)\):
\begin{align*}
	\delta_\bw : [d]\times \NN & \rightarrow \RR  \\
	(x,y) & \mapsto \frac{dy-nx}{d}
\end{align*}
(we can extend this definition for the words \(\bw \in \{b\}^\star\) by \(\delta_\bw : [0]\times\NN \rightarrow \RR, (0,y) \mapsto y\)).

The name `algebraic vertical distance' comes from the fact that the point \((x, y-\delta_\bw(x,y))=(x,(n/d)x)\) belongs to the line going through \((0,0)\) and \((d,n)\). By construction, \(\delta_\bw((0,0)) = \delta_\bw((d,n))=0\) (they are on the line). Moreover, if \(\delta_\bw(x,y) <0\), it means that \((x,y)\) lies below the line and if \(\delta_\bw > 0\), then the point is above.

If \(d,n \geq 1\), the flattening operation consists in removing from \(\bel(\bw)\) all points which stand strictly above the line.

We define
\begin{align*}
	\flat(\bw) & \defeq \hull\left(\bel(\bw) \cap \delta_\bw^{-1}( \left]-\infty,0\right] ) \right) \\
	& = \hull\left(\bel(\bw) \cap T_{n,d} \right).
\end{align*}
So we already have
\[
	\bel({\flat(\bw)} ) = \bel(\bw) \cap \delta_\bw^{-1}( \left]-\infty,0\right] ) \subseteq \bel(\bw).
\]

Such an example of flattening is presented in Figure~\ref{fig:flattening}.

\begin{figure}
	\centering
	\begin{subfigure}{.4\textwidth}
		\centering
		\includegraphics[scale=0.4]{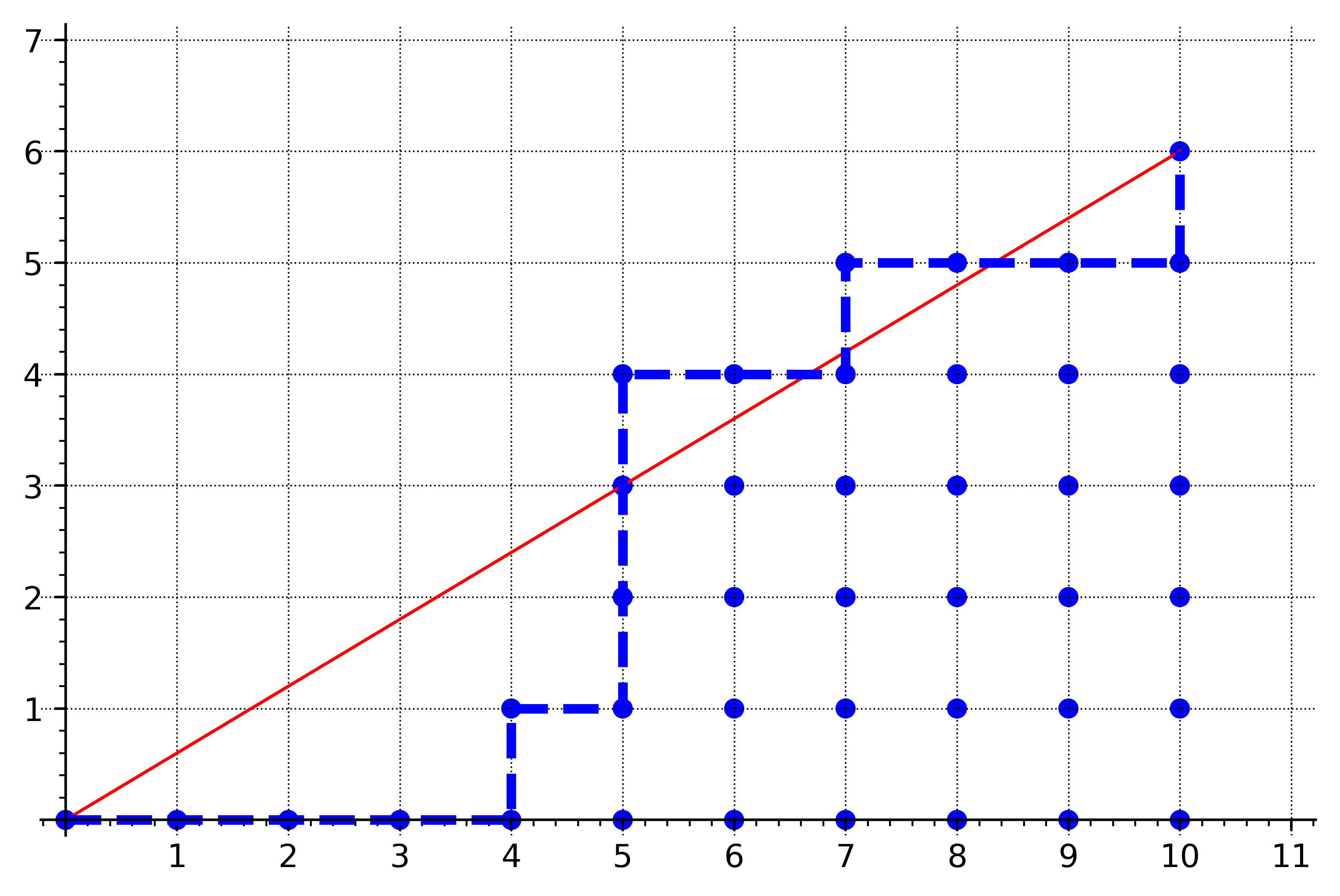}
	\end{subfigure}%
	\begin{subfigure}{.15\textwidth}
		\ 
	\end{subfigure}
	\begin{subfigure}{.4\textwidth}
		\centering
		\includegraphics[scale=0.4]{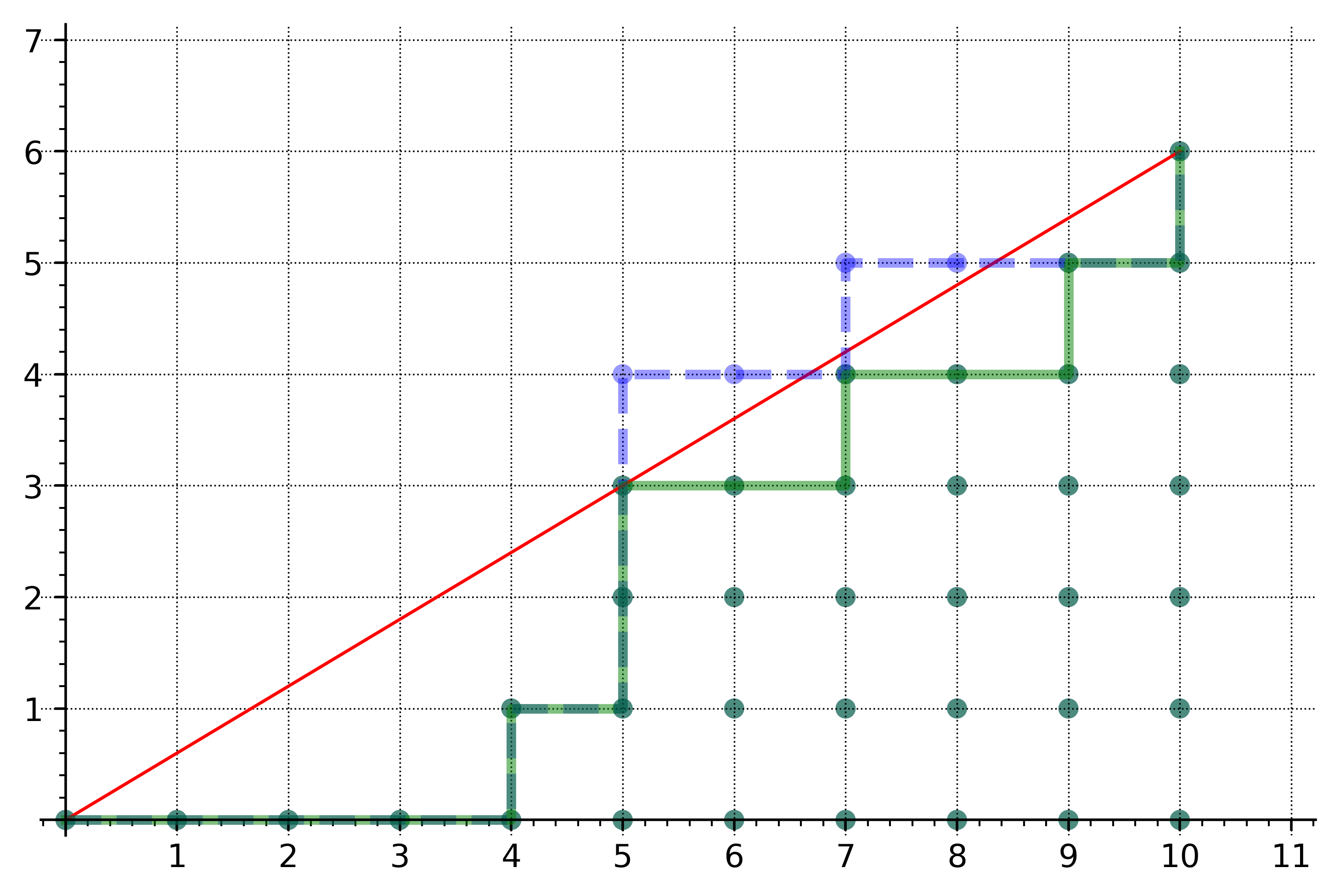}
	\end{subfigure}
	\caption{The green path is the flattening of the blue path.}
	\label{fig:flattening}
\end{figure}

We show now that flattening a word decreases its \(m\)-value. The idea of the proof is just to remove, one by one, each vertex above the line via simple flips. Notice that Lemma~\ref{lem:simpleFlip} ensures that if we are not in the first case of the lemma, a flip from \(\bu ba \bv\) to \(\bu ab \bv\) always decreases the \(m\)-value. So, the only point of the next proof is to find a good order of the points to flip such that the case 1 never occurs.

\begin{lemma}\label{lem:flattening}
	Let \(\bw \in \{a,b\}^\star\). Then\[
		m(\flat(\bw)) \leq  m(\bw),
	\]
	and the equality stands only if \(\flat(\bw)  = \bw\).
\end{lemma}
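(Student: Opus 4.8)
The plan is to realize the flattening as a finite composition of simple flips, each of which removes one lattice point lying strictly above the line, and to argue that at every stage we can choose the point to flip so that case~2 or~3 of Lemma~\ref{lem:simpleFlip} applies (never case~1), so the $m$-value strictly decreases at each step unless nothing needs to be flipped. If $d=0$ or $n=0$ the word is a power of a single letter, there are no points strictly above the line, $\flat(\bw)=\bw$, and there is nothing to prove; so assume $d,n\ge 1$.

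**First** I would set up the induction on the number $N$ of points of $\bel(\bw)$ lying strictly above the line, i.e.\ with $\delta_\bw(x,y)>0$. If $N=0$ then $\bel(\bw)\subseteq T_{n,d}$, so $\bel(\bw)\cap T_{n,d}=\bel(\bw)$, hence $\flat(\bw)=\hull(\bel(\bw))=\bw$ and $m(\flat(\bw))=m(\bw)$. For the inductive step, among all points of $\bel(\bw)$ strictly above the line I would pick one on the upper hull of $\bel(\bw)$ — i.e.\ a point $(x,y)\in\hull(\bel(\bw))=\bw$ with $\delta_\bw(x,y)>0$ — and among those, one that is ``locally extremal''. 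Concretely, a corner of the path of the form $\dots b a\dots$ occurring at a point strictly above the line: the path makes a $b$-step arriving at some $(x,y)$ and then an $a$-step leaving it. At such a corner a simple flip replaces $\overline{\bw_1}ba\bw_2$ by $\overline{\bw_1}ab\bw_2$, which geometrically deletes exactly the point $(x-1,y)$ above the corner (it is the point just NW of the corner that leaves $\bel$). The key is to choose this corner so that we are in case~2 or~3 of Lemma~\ref{lem:simpleFlip}, never case~1; then $m$ strictly drops and the new word has $N-1$ points strictly above the line, so induction finishes.

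**The hard part will be** the combinatorial-geometric argument that such a favorable $ba$-corner always exists above the line, and that after flipping it we have not created any new point above the line (so the count genuinely drops by exactly one and we stay on track to reach $\flat(\bw)$). For existence: since there is at least one point strictly above the line and the endpoints $(0,0),(d,n)$ are on the line, walking along the path $\bw$ there must be a maximal run of steps that stays weakly above the line; at the last corner of the ``overhang'' the path turns from going up/right to coming back down toward the line, which forces a $b$-step immediately followed by an $a$-step at a point with $\delta_\bw>0$. To guarantee we are not in case~1 of Lemma~\ref{lem:simpleFlip}, I would pick, among all such $ba$-corners strictly above the line, the one that is \emph{rightmost} (largest $x$-coordinate): then in the decomposition $\bw=\overline{\bw_1}\,b\,a\,\bw_2$ the suffix $\bw_2$ cannot begin with an $a$ that comes from another corner above the line (such a corner would be farther right), so writing $\bw_1=\bu x\bu_1$, $\bw_2=\bu y\bu_2$ with $\bu$ the longest common prefix, one checks the relevant letters force case~2 or case~3 rather than case~1 — here I would use that $(x-1,y)$ being strictly above the line pins down the local shape of the path around the corner. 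The bookkeeping that flipping a rightmost above-the-line $ba$-corner removes one above-line point and creates none is then a direct check on the two affected unit cells, and applying the inductive hypothesis to the resulting word (which still has the same $(d,n)$ and the same flattening $\flat(\bw)$, since flattening only depends on $\bel(\bw)\cap T_{n,d}$ and we removed only a point outside $T_{n,d}$) completes the argument, with strictness inherited because every step we took was a strict decrease.
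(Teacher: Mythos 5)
Your overall strategy (induction on the number of points of \(\bel(\bw)\) strictly above the line, removing one such point per simple flip, and reducing everything to showing case~1 of Lemma~\ref{lem:simpleFlip} never occurs) is the same as the paper's, but your selection rule for the corner is not sufficient, and this is exactly the crux of the proof. Choosing the \emph{rightmost} \(ba\)-corner strictly above the line does not rule out case~1. Concretely, take \(\bw = babab\), so \((d,n)=(2,3)\) and \(\delta_\bw(x,y)=y-\tfrac{3}{2}x\). The \(ba\)-corners strictly above the line are at \((0,1)\) (with \(\delta_\bw=1\)) and \((1,2)\) (with \(\delta_\bw=\tfrac12\)); your rule picks the rightmost one, \((1,2)\), i.e.\ the factorization \(\bw=\overline{\bw_1}\,ba\,\bw_2\) with \(\bw_1=ab\) and \(\bw_2=b\). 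Their longest common prefix is empty, \(\bw_1\) starts with \(a\) and \(\bw_2\) with \(b\), so this is precisely case~1 of Lemma~\ref{lem:simpleFlip}, and indeed \(m(baabb)=43>41=m(babab)\): the flip at your chosen corner strictly \emph{increases} the \(m\)-value (and in the boundary situation of point~4 it can leave it unchanged), so the strict-decrease induction collapses. The geometric reason is visible in the paper's computation: in case~1 the two points \((x_1,y_1)\) and \((x_2,y_2)\) obtained by translating the corner by \(\pm(p+1,q+1)\) satisfy \(\delta_\bw(x_1,y_1)+\delta_\bw(x_2,y_2)=2\delta_\bw(x,y)\), and when the slope is steep the ``mirror'' point lying higher above the line can be the one to the \emph{left} of the corner, while everything to the right is already on or below the line; rightmost-ness then yields no contradiction. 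The paper instead chooses the corner maximizing the couple \((\delta_\bw(x,y),x)\) \emph{lexicographically} -- maximal distance above the line first, rightmost only as a tie-break -- and with that choice the identity above forces \(\delta_\bw(x_2,y_2)=\delta_\bw(x,y)\) with \(x_2>x\), which is the contradiction. Your argument needs this \(\delta\)-maximality (or an equivalent), not mere rightmost-ness; as written, the ``hard part'' you identify is exactly the step that fails.

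A secondary, harmless slip: flipping the corner whose \(b\)-step arrives at \((x,y)\) removes the point \((x,y)\) itself from \(\bel(\bw)\), not the point \((x-1,y)\) (in the example above, \((0,2)\) is not even in \(\bel(babab)\)). The bookkeeping you want is that the flip removes exactly one point, adds none, and that this point lies outside \(T_{n,d}\), so \(\flat\) is unchanged -- which is what the paper records via \(\bel(\bu^\prime ab\bv^\prime)=\bel(\bw)\setminus\{(x,y)\}\).
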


\begin{proof}
	Let \(\bw \in \{a,b\}^\star\) with \(d\) `a' and \(n\) `b'.
	
	We show this lemma by induction on the size of \(\bel(\bw)\setminus\bel(\flat(\bw))\).
	
	If this set is empty, it implies that \(\bel(\flat(\bw)) = \bel(\bw)\), i.e., \(\flat(\bw) = \bw\) and so \(m(\flat(\bw)) =  m(\bw)\).
	
	Otherwise, let \((x,y)\) be the integer point in \(\bel(\bw)\) such that the couple \((\delta_\bw(x,y),x)\) is maximal (via the lexicographical order of \(\RR^2\)). As \(\bel(\bw) \neq \bel(\flat(\bw))\), it implies there exist points in \(\bel(\bw)\) with positive \(\delta_\bw\). So, \(\delta_\bw(x,y) > 0\). Using again the maximality of \(\delta_\bw(x,y)\), \((x,y)\) lies on the hull of \(\bel(\bw)\), i.e., on the path \(\bw\). Let \(\bu\) be the prefix of \(\bw\) such that the path \(\bu\) ends at \((x,y)\) and let \(\bv\) such that \(\bw = \bu \bv\). As \(\delta_\bw(x,y)>0\), we know that \(\bu\) and \(\bv\) are non empty. If \(\bu\) ends by a `a', it would imply that \((x-1,y)\) lies on \(\bw \). However, \(\delta_\bw(x-1,y) = (dy-nx+n)/d > \delta_\bw(x,y)\) which would contradict the maximality. So \(\bu\) ends with a `b'. Similarly, as \(\delta_\bw(x,y+1) > \delta_\bw(x,y)\), \(v\) starts with the letter `a'. Consequently, \(\bw = (\bu^\prime b)(a \bv^\prime)\). We would like to flip this factor `ba'. To ensure the \(m\)-value decreases during the flip, we just need to show that we are not in the first case of Lemma~\ref{lem:simpleFlip}. Let \(\bz\) be the largest common prefix of \(\overline{\bu^\prime}\) and \(\bv^\prime\). 
	
	Assume that \(\overline{\bu^\prime} = \bz a \overline{\bu^{\prime\prime}}\) and \(\bv^\prime = \bz b \bv^{\prime\prime}\). 
	Let \((x_z,y_z)\) be the endpoint of the path \(\bz\), \((x_1,y_1)\) be the one of \(\bu^{\prime\prime}\) and \((x_2,y_2)\) be the one of \(u^\prime ba \bz b\) (the starting point is always implicitly the origin \((0,0)\)). 
	We have,
	\[
		(x_1,y_1)+(1,0)+(x_z,y_z) + (0,1)= (x,y)
	\]
	and
	\[
		(x,y)+(1,0)+(x_z,y_z)+(0,1) = (x_2,y_2).
	\]
	Hence,
	\begin{align*}
		\delta_\bw(x_1,y_1)+\delta_\bw(x_2,y_2) &=  \frac{d(y_1+y_2)-n(x_1+x_2)}{d} 
		& =  \frac{2dy-2nx}{d} \\
		& = 2 \delta_\bw(x,y).
	\end{align*}
	By maximality of \(\delta_\bw(x,y)\), it would imply that \(\delta_\bw(x_1,y_1) = \delta_\bw(x_2,y_2)=\delta_\bw(x,y)\). But, as \(x_2>x\), it contradicts the maximality of the couple \((\delta_\bw(x,y),x)\).
	
	So we are not in case 1 of Lemma~\ref{lem:simpleFlip}. Hence, as \(\bw = \bu^\prime ba \bv^\prime\),
	\begin{equation}\label{eq:flattening1}
		m(\bw) > m(\bu^\prime ab \bv^\prime).
	\end{equation}
	
	By construction, \(\bel(\bu^\prime ab \bv^\prime) = \bel(\bw)\setminus\{(x,y)\}\) and \(\delta_\bw(x,y) >0\). So,
	\(\flat(\bu^\prime ab \bv^\prime) = \flat(\bw)\). Finally, by induction hypothesis, 
	\begin{equation}\label{eq:flattening2}
		m(\bu^\prime ab \bv^\prime) \geq m(\flat(\bw)).
	\end{equation}
	Combining Equations (\ref{eq:flattening1}) and (\ref{eq:flattening2}) concludes the induction and so proves the lemma.
\end{proof}

\subsection{Application to the fixed denominator and numerator conjectures}

The fixed denominator conjecture is an easy consequence of Lemma~\ref{lem:flattening}:
\begin{proposition}\label{prop:FDC}
	Let \((d,n) \in \NN^2\). We have \(m(c(n,d)) < m(c(n+1,d))\).
\end{proposition}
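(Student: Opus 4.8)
The plan is to realize $c(n+1,d)$ as a path that, after one flattening, becomes $c(n,d)$, and then invoke Lemma~\ref{lem:flattening} to conclude the strict inequality $m(c(n,d)) < m(c(n+1,d))$. The only subtlety is that $c(n+1,d)$ does \emph{not} directly flatten to $c(n,d)$ — flattening $c(n+1,d)$ with respect to \emph{its own} diagonal (from $(0,0)$ to $(d,n+1)$) does nothing, since $c(n+1,d)$ already lies below that line. So the idea is to apply the flattening operation to a suitably chosen word whose diagonal is the line from $(0,0)$ to $(d,n)$, and which has $c(n+1,d)$ ``sitting above'' it in the right way.

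First I would exhibit the right intermediate word. Consider the path $c(n+1,d)$ from $(0,0)$ to $(d,n+1)$; I want to compare it against the line $L$ through $(0,0)$ and $(d,n)$. Since the endpoint $(d,n+1)$ lies strictly above $L$, the word $c(n+1,d)$ contains points strictly above $L$, so flattening with respect to $L$ removes a nonempty set and strictly decreases the $m$-value by Lemma~\ref{lem:flattening}. The key computational step is then to identify $\flat_L(c(n+1,d))$ — the hull of $\bel(c(n+1,d)) \cap T_{n,d}$ — with $c(n,d)$. For this I would argue that $\bel(c(n+1,d)) \supseteq T_{n,d}$: every lattice point $(u,v)$ with $u\le d$ and $dv\le un$ lies on or below the segment to $(d,n)$, hence strictly below the segment to $(d,n+1)$ (for $0<u<d$), hence — by the defining ``no interior lattice points'' property of the generalized Christoffel path — lies in $\bel(c(n+1,d))$. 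Combined with $\bel(c(n+1,d)) \cap T_{n,d} \subseteq T_{n,d}$, this gives $\bel(c(n+1,d)) \cap T_{n,d} = T_{n,d}$, and since $\bel(c(n,d)) = T_{n,d}$ by the claim in the excerpt and $\hull \circ \bel = \mathrm{id}$ on words, we get $\flat_L(c(n+1,d)) = c(n,d)$.

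Then the proof concludes: $m(c(n,d)) = m(\flat_L(c(n+1,d))) < m(c(n+1,d))$, where the strict inequality is Lemma~\ref{lem:flattening} together with the fact that the flattening is nontrivial (the point $(d,n+1)$, and more precisely the last vertical step of $c(n+1,d)$, is removed). One should be slightly careful that Lemma~\ref{lem:flattening} as stated flattens $\bw$ against \emph{its own} diagonal; here I am flattening against the diagonal of a \emph{different} endpoint, so I would either restate the lemma for an arbitrary reference line of slope $\le$ that of the path's endpoint, or — more cleanly — note that the proof of Lemma~\ref{lem:flattening} only uses that $\delta$ is an affine function vanishing at $(0,0)$ and that the removed points have positive $\delta$, which holds verbatim here. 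I expect the main obstacle to be this bookkeeping about which line the flattening is taken with respect to, plus verifying cleanly that $T_{n,d} \subseteq \bel(c(n+1,d))$; everything else is immediate from the machinery already set up.
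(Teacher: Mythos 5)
Your overall strategy (realize $c(n,d)$ as a flattening of something built from $c(n+1,d)$ and invoke Lemma~\ref{lem:flattening}) is the same as the paper's, and your identification $\bel(c(n+1,d))\cap T_{n,d}=T_{n,d}$, hence $\hull(\cdot)=c(n,d)$, is correct. But there is a genuine gap in the step where you claim that the proof of Lemma~\ref{lem:flattening} applies ``verbatim'' to flattening against the line $L$ through $(0,0)$ and $(d,n)$ rather than against the path's own diagonal. That proof removes the offending points one at a time by \emph{simple flips}, and a flip preserves the number of `a's and `b's, hence the endpoint of the path; no sequence of flips can turn $c(n+1,d)$ (with $n+1$ letters `b') into $c(n,d)$ (with $n$ letters `b'). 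Concretely, the induction step chooses the point $(x,y)$ of $\bel(\bw)$ maximizing $(\delta,x)$ and uses $\delta(\text{endpoint})=0$ to conclude that the prefix $\bu$ and suffix $\bv$ around $(x,y)$ are both nonempty, so that a factor `ba' is available to flip. With your line $L$, one checks $\delta_L(x,y)\le x/d\le 1$ on $\bel(c(n+1,d))$ with equality only at $(d,n+1)$, so the maximizer is precisely the endpoint, $\bv$ is empty, and the flip argument breaks down at the very first step. Removing that point is not a flip but the deletion of the final vertical step, which needs a separate monotonicity argument.

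This is exactly the extra ingredient the paper supplies: it writes $c(n+1,d)=\bw b$ and shows by a direct computation that
\[
m(\bw b)-m(\bw)=\begin{pmatrix}1&0\end{pmatrix}M^{\bw}(B-I)\begin{pmatrix}0\\1\end{pmatrix}>0,
\]
so deleting the trailing `b' strictly decreases the $m$-value; then $\bw$ ends at $(d,n)$, its own diagonal is your line $L$, and Lemma~\ref{lem:flattening} applies as stated to give $\flat(\bw)=c(n,d)$ and $m(\bw)\ge m(c(n,d))$. Your argument becomes correct once you insert this (or an equivalent) endpoint-removal step instead of appealing to a ``generalized'' flattening lemma; you should also dispose of the degenerate cases $d=0$ and $n=0$ separately (the paper does so via the Fibonacci identities), since $\delta_\bw$ is defined differently for purely vertical paths and the flattening picture degenerates there.
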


\begin{proof}
	If \(n=0\), then \(m(c(0,d)) = \Fibo_{2d} < \Fibo_{2d+1} = m(c(1,d))\). Moreover, if  \(d=0\), then \(m(c(n,0)) = \Fibo_{2n} < \Fibo_{2n+2} = m(c(n+1,0))\). So we can assume in the following that \(d,n \geq 1\).
	
	We know that \(c(n+1,d)\) ends with a `b'. So let us define \(\bw b \defeq c(n+1,d)\). We have
	\[m(\bw b) - m(\bw) = \begin{pmatrix}
		1 & 0
	\end{pmatrix} M^\bw (B-I) \begin{pmatrix}
	0 \\ 1
\end{pmatrix} > 0.
	\]
	
	Now, \(\bel(\bw) = \bel(c(n+1,d))\setminus\{(d,n+1)\} = T_{n+1,d}\setminus\{(d,n+1)\}\). Hence, as \(T_{n,d} \subseteq T_{n+1,d}\setminus\{(d,n+1)\}\),
	\[
	\flat(\bw) = \hull\left((T_{n+1,d}\setminus\{(d,n+1)\}) \cap T_{n,d} \right) = \hull(T_{n,d}) = c(n,d).
	\]
	
	Then by Lemma~\ref{lem:flattening}, 
	\[
		m(c(n+1,d)) > m(\bw) \geq m(c(n,d)).
	\]
\end{proof}

As we proved Proposition~\ref{prop:FDC} for all couples \((d,n) \in \NN^2\), it directly implies the fixed numerator conjecture which was already solved in~\cite{RS20}:
\begin{corollary}\label{prop:FNC}
	Let \((d,n) \in \NN^2\). We have \(m(c(n,d)) < m(c(n,d+1))\).
\end{corollary}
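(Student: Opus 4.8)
The plan is to reduce the fixed numerator statement to the fixed denominator statement (Proposition~\ref{prop:FDC}) by exploiting the symmetry already established in Lemma~\ref{lem:SymFD}, namely \(m(c(n,d)) = m(c(d,n))\) for every \((d,n) \in \NN^2\). That lemma says the \(m\)-value table of Figure~\ref{fig:firstM} is symmetric across the main diagonal: transposing the first index with the second leaves the \(m\)-value unchanged. Increasing the second coordinate \(d\) by one, with \(n\) fixed, should therefore be the mirror of increasing the first coordinate by one with the other fixed --- which is exactly what Proposition~\ref{prop:FDC} controls.

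Concretely, first I would apply Lemma~\ref{lem:SymFD} to the left-hand side: \(m(c(n,d)) = m(c(d,n))\). Next I would apply Lemma~\ref{lem:SymFD} to the right-hand side: \(m(c(n,d+1)) = m(c(d+1,n))\). So the inequality \(m(c(n,d)) < m(c(n,d+1))\) is equivalent to \(m(c(d,n)) < m(c(d+1,n))\). But that last inequality is precisely the instance of Proposition~\ref{prop:FDC} obtained by renaming the pair \((d,n)\) there as \((n,d)\) here: Proposition~\ref{prop:FDC} asserts \(m(c(n',d')) < m(c(n'+1,d'))\) for \emph{all} \((d',n') \in \NN^2\), and we invoke it with \(n' = d\), \(d' = n\). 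Chaining the two equalities with this strict inequality gives \(m(c(n,d)) < m(c(n,d+1))\), as desired. The argument is symmetric in \(n\) and \(d\) and needs no case distinction, since Proposition~\ref{prop:FDC} already absorbed the degenerate cases \(d=0\) and \(n=0\).

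There is essentially no obstacle here: the corollary is a three-line formal consequence, and the only thing to be careful about is that Proposition~\ref{prop:FDC} was stated for \emph{all} couples in \(\NN^2\) rather than only coprime ones with \(n < d\) --- which is exactly why the authors phrased it for the generalized Christoffel words \(c(n,d)\) in the first place. (This is the remark made just before the corollary's statement: the work of passing to the superset of all pairs was done precisely so that this step is free.) One could alternatively prove it directly by flattening, mimicking the proof of Proposition~\ref{prop:FDC} with a column appended instead of a row, but routing through Lemma~\ref{lem:SymFD} is shorter and avoids repeating the flattening bookkeeping.
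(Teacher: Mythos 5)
Your proof is correct and is exactly the paper's argument: apply Lemma~\ref{lem:SymFD} on both sides and invoke Proposition~\ref{prop:FDC} with the roles of the coordinates swapped, yielding \(m(c(n,d)) = m(c(d,n)) < m(c(d+1,n)) = m(c(n,d+1))\). Nothing is missing.
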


\begin{proof}
	This is directly implied by Lemma~\ref{lem:SymFD} and Proposition~\ref{prop:FDC}: \(m(c(n,d))  = m(c(d,n)) < m(c(d+1,n)) =  m(c(n,d+1))\).
\end{proof}

\subsection{Liftings and minimality for Christoffel like words}

We saw before how to flatten a path until reaching the corresponding Christoffel. We handle here the opposite direction: we want to raise a path until reaching the Christoffel.

We recall the definition of the algebraic vertical distance: 
\begin{align*}
	\delta_\bw : [d]\times \NN & \rightarrow \RR  \\
	(x,y) & \mapsto \frac{dy-nx}{d}.
\end{align*}

The {\em lifting} operation consists in adding to \(\bel(\bw)\) all points which stand strictly below the line. We define
\begin{align*}
	\lift(\bw) & \defeq \hull\left(\bel(\bw) \cup \delta_\bw^{-1}( \left]-\infty,0\right[ ) \right).
\end{align*}
So we have
\[
\bel({\lift(\bw)} ) = \bel(\bw) \cup \delta_\bw^{-1}( \left]-\infty,0\right[ ).
\]

One can wonder why \(0\) is withdrawn of the interval just above. In fact, to reconstruct Christoffel words, we should add the \(0\). However, we give two reasons for our choice:
\begin{itemize}
	\item it would need more complex operations (for example, simple flips would not be sufficient anymore), so it makes sense to treat this case later alone,
	\item furthermore, looking at Proposition~\ref{prop:minimality}, the extremal words are not only the Christoffel but also close variations which are preserved with this definition.
\end{itemize} 

Such an example of lifting is presented in Figure~\ref{fig:lifting}.

\begin{figure}
	\centering
	\begin{subfigure}{.4\textwidth}
		\centering
		\includegraphics[scale=0.4]{FlatteningBefore}
	\end{subfigure}%
	\begin{subfigure}{.15\textwidth}
		\ 
	\end{subfigure}
	\begin{subfigure}{.4\textwidth}
		\centering
		\includegraphics[scale=0.4]{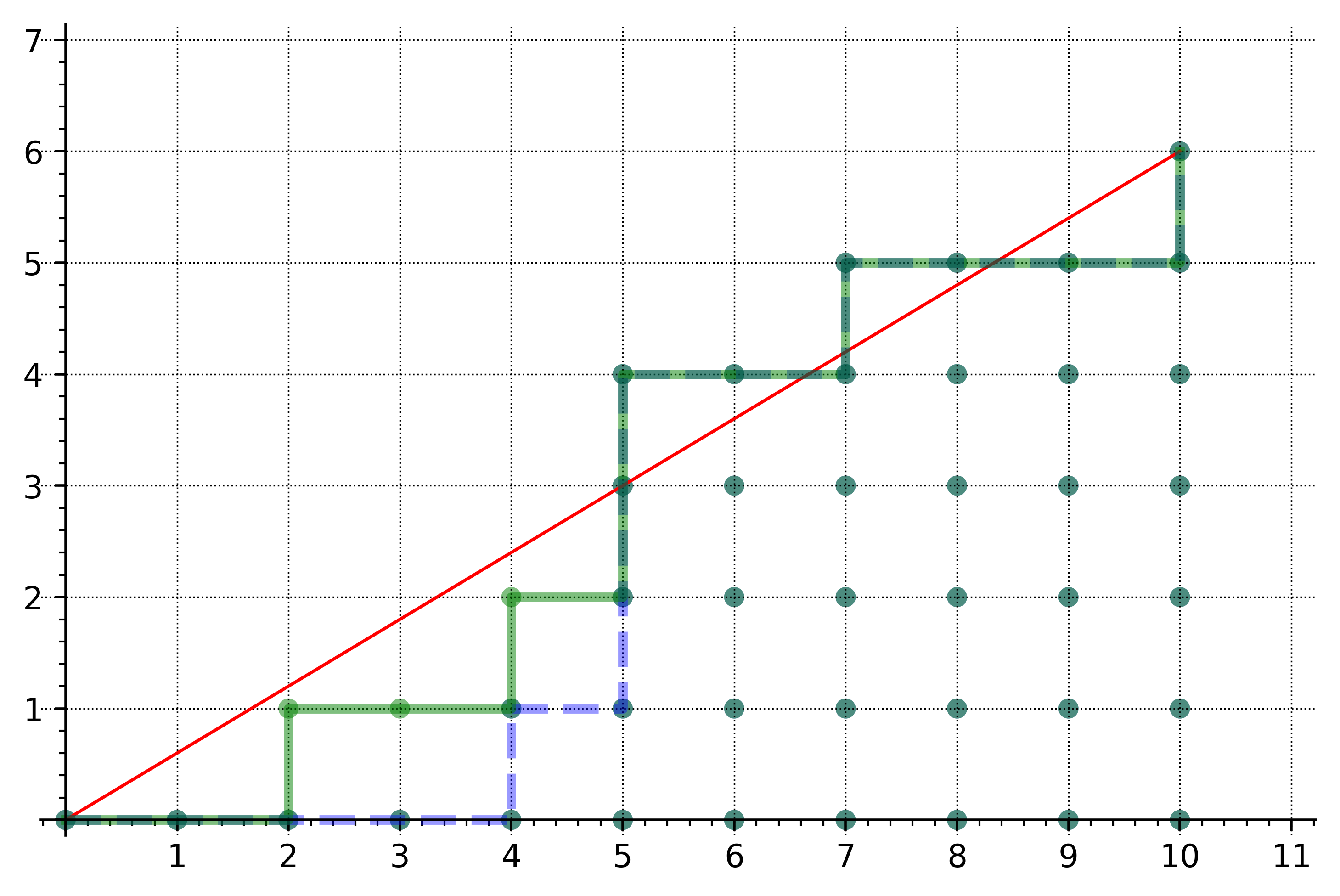}
	\end{subfigure}
	\caption{The green path is the lifting of the blue path.}
	\label{fig:lifting}
\end{figure}

Similarly to the case of the flattenings, the \(m\)-value decreases during a lifting. Also, in the same way, simple flips are enough to get the next Lemma. The proof is quite similar to the one of Lemma~\ref{lem:flattening}, but now as we flip from a word \(\overline{\bw_1}ab \bw_2\) towards the word \(\overline{\bw_1} ba \bw_2\), we want to ensure that we are in the first case of Lemma~\ref{lem:simpleFlip}. 

\begin{lemma}\label{lem:lifting}
	Let \(\bw \in \{a,b\}^\star\). Then\[
	m(\lift(\bw)) \leq  m(\bw).
	\]
	
	Moreover, the equality stands only if \(\lift{(\bw)} = \bw\).
\end{lemma}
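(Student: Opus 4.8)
The plan is to copy the proof of Lemma~\ref{lem:flattening} almost verbatim, with the segment from $(0,0)$ to $(d,n)$ playing the same role but ``below'' and ``above'' interchanged and the flip direction reversed. Write $\bw$ with $d$ letters `a' and $n$ letters `b'; if $d=0$ or $n=0$ then $\delta_\bw^{-1}(\left]-\infty,0\right[)=\emptyset$, so $\lift(\bw)=\bw$ and there is nothing to prove. Otherwise I would induct on the cardinality of $\bel(\lift(\bw))\setminus\bel(\bw)=\delta_\bw^{-1}(\left]-\infty,0\right[)\setminus\bel(\bw)$. If this set is empty then $\lift(\bw)=\bw$ and equality holds. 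For the inductive step the goal is to remove a single point of $\delta_\bw^{-1}(\left]-\infty,0\right[)$ from $\bel(\lift(\bw))\setminus\bel(\bw)$ by one simple flip of the form $\overline{\bw_1}ab\bw_2\to\overline{\bw_1}ba\bw_2$ which lands in case~1 of Lemma~\ref{lem:simpleFlip} (so that the $m$-value does not increase), and then to apply the induction hypothesis to the resulting word.

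Concretely I would pick, among the points $(x,y)\notin\bel(\bw)$ with $\delta_\bw(x,y)<0$, one with $\delta_\bw(x,y)$ minimal and, subject to that, with $x$ maximal — the analogue of the choice of the lexicographically maximal $(\delta_\bw(x,y),x)$ in the flattening proof. Minimality of $\delta_\bw(x,y)$ immediately forces $(x,y-1)\in\bel(\bw)$ and, since $x<d$, also $(x+1,y)\in\bel(\bw)$ (a missing point one step down or one step right would again lie strictly below the line but with a strictly smaller value of $\delta_\bw$). Hence $(x,y-1)$ is the top of column $x$ on the path, the path leaves it by an `a' and then immediately goes up in column $x+1$, i.e. $\bw=\overline{\bw_1}ab\bw_2$ with $\overline{\bw_1}$ the prefix of $\bw$ reaching $(x,y-1)$; flipping this factor gives $\bw'\defeq\overline{\bw_1}ba\bw_2$ with $\bel(\bw')=\bel(\bw)\cup\{(x,y)\}$ (again a packed set). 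Since $(x,y)$ lies strictly below the line we get $\lift(\bw')=\lift(\bw)$ while $\bel(\lift(\bw'))\setminus\bel(\bw')$ has lost exactly the point $(x,y)$, so the induction hypothesis applies to $\bw'$.

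The heart of the argument, and the step I expect to be the main obstacle, is to check that this flip really is in case~1 of Lemma~\ref{lem:simpleFlip} (hence $m(\bw)\ge m(\bw')$) and not in its case~4 (hence the inequality is strict). Case~4 is easy to dispose of: it forces $\bw$ to have the form $a\overline{\bz}\,ab\,\bz b$ for some word $\bz$, and a one-line computation shows that the point added by the corresponding flip then lies on the line $\delta_\bw=0$, contradicting $\delta_\bw(x,y)<0$; so case~4 never occurs here. To rule out cases~2 and~3 I would argue exactly as in the proof of Lemma~\ref{lem:flattening}: letting $\bz$ be the largest common prefix of $\bw_1$ and $\bw_2$, the occurrence of $\overline{\bz}$ on one side of the flipped factor and of $\bz$ on the other side produces two lattice points $(x_1,y_1),(x_2,y_2)$ of $\bw$, symmetric with respect to the corner $(x+1,y-1)$, with $\delta_\bw(x_1,y_1)+\delta_\bw(x_2,y_2)=2\delta_\bw(x+1,y-1)$ and $x_1<x+1<x_2$; one then shows that this, together with the extremal choice of $(x,y)$ (each of these points gives rise, by the same shift up and to the left, to a point strictly below the line not in $\bel(\bw)$), forces $\delta_\bw(x_1,y_1)=\delta_\bw(x_2,y_2)$ and then contradicts the maximality of $x$. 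As in the flattening proof (``the only point of the next proof is to find a good order of the points to flip such that case~1 never occurs''), the delicate part is precisely this combinatorial bookkeeping — checking that the two produced points genuinely lie in the class over which the extremum was taken, and that the comparison of $\delta_\bw$-values is exact. Once cases~2,~3,~4 are excluded we are in case~1, so $m(\bw)>m(\bw')$; combined with the induction hypothesis $m(\lift(\bw'))\le m(\bw')$ this yields $m(\lift(\bw))=m(\lift(\bw'))\le m(\bw')<m(\bw)$ whenever $\lift(\bw)\neq\bw$, which together with the base case proves the lemma.
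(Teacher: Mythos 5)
Your strategy is sound and, modulo the two repairs below, it does prove the lemma; it is the same induction as the paper's (remove the points of \(\bel(\lift(\bw))\setminus\bel(\bw)\) one simple flip at a time, checking each flip lands in case~1 of Lemma~\ref{lem:simpleFlip}), but with a different pivot. The paper flips at the point of the \emph{path} minimizing \((\delta_\bw(x,y),x)\) lexicographically, and uses a \(\delta_\bw\)-minimal point of the difference set only to establish the bound \(\delta_\bw(x,y)<-1-n/d\); that bound is what keeps its auxiliary points strictly below the line in the case analysis, and it also (implicitly) rules out the equality case~4 — a point the paper never spells out, since its inequality~(\ref{eq:lifting1}) is only stated with \(\geq\). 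You instead flip next to a \(\delta_\bw\)-minimal, then \(x\)-maximal, point of the \emph{difference set} itself; this makes the bookkeeping of the added point transparent, and your exclusion of case~4 (the added point would lie on the line \(\delta_\bw=0\), contradicting \(\delta_\bw(x,y)<0\)) is a clean, explicit justification of the strictness claim.

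Two steps in your case~2/3 paragraph need fixing, though both are repairable with your own tools. First, the identity \(\delta_\bw(x_1,y_1)+\delta_\bw(x_2,y_2)=2\delta_\bw(x+1,y-1)\) is correct only for the case~2 pair: writing \(\bw_1=\bz b\bu_1\), \(\bw_2=\bz a\bu_2\), these are the endpoints of \(\overline{\bu_1}\) and of \(\overline{\bu_1}\,b\,\overline{\bz}\,ab\,\bz\,a\) (flanking letters included). In case~3 one flanking letter does not exist; there the right pair consists of an extremity of \(\bw\) (where \(\delta_\bw=0\)) and the point reached across the block \(\overline{\bz}\,ab\,\bz\), whose \(\delta_\bw\) equals \(2\delta_\bw(x,y)-1-n/d\); its up-left shift lies in the difference set, and minimality alone gives \(\delta_\bw(x,y)\geq 0\), a contradiction — no maximality of \(x\) is needed in case~3. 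Second, in case~2 it is not automatic that \emph{both} shifted points \((x_i-1,y_i+1)\) lie strictly below the line: since the two \(\delta_\bw\)-values sum to \(2\delta_\bw(x+1,y-1)\), only the smaller one is a priori \(<-1-n/d\). Apply minimality to that one first; this forces both values to equal \(\delta_\bw(x+1,y-1)\), hence both shifted points lie in the difference set with \(\delta_\bw\) equal to the minimum, and the right-hand one has abscissa \(x_2-1>x\), contradicting the maximality of \(x\). Together with the small facts you use silently (\(x<d\), \(y\geq 1\), and that for any path point of positive abscissa the up-left shift leaves \(\bel(\bw)\)), this completes your argument.
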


\begin{proof}
	Let \(\bw \in \{a,b\}^\star\) with \(d\) `a' and \(n\) `b'.
	
	We show this lemma by induction on the size of \(\bel(\lift(\bw))\setminus\bel(\bw)\).
	
	If this set is empty, it implies that \(\bel(\lift(\bw)) = \bel(\bw)\), i.e., \(\lift(\bw) = \bw\) and so \(m(\lift(\bw)) =  m(\bw)\).
	
	Otherwise, since \(\bel(\bw) \subsetneq \bel(\lift(\bw)) \), we can extract from \(\bel(\lift(\bw))\setminus\bel(\bw)\) a point \((x^\prime,y^\prime)\) such that \(\delta_\bw(x^\prime,y^\prime)\) is minimal. So, \((x^\prime+1,y^\prime-1)\) belongs to the path \(\bw\) and verifies 
	\begin{align*}
		\delta_\bw(x^\prime+1,y^\prime-1) &= \frac{d(y^\prime -1)-n(x^\prime+1)}{d} \\
		&= \delta_\bw(x^\prime,y^\prime) -1 -n/d \\
		& < -1-n/d.
	\end{align*}
	It implies that \((x,y)\), the point on \(\bw\) such that the couple \((\delta_\bw(x,y),x)\) is minimal, satisfies \(\delta_\bw(x,y)<-1-n/d\). In particular it is not an extremity of \(\bw\). Let \(\bu\) be the prefix of \(\bw\) such that the path \(\bu\) ends at \((x,y)\) and let \(\bv\) such that \(\bw = \bu \bv\). Again by minimality, we know that \(\bu\) ends with an `a' and \(\bv\) starts with a `b'. So \(\bw = \bu^\prime ab \bv^\prime\). We would like to flip the factor `ab'. To ensure the \(m\)-value decreases during the flip, we need to show that we are in the first case of Lemma~\ref{lem:simpleFlip}. Let \(\bz\) be the largest common prefix of \(\overline{\bu^\prime}\) and \(\bv^\prime\). So \(\bw = \bu_1 \overline{\bz} ab \bz \bv_1\).
	
	Let \((x_1,y_1)\) be the vertex at the end of \(\bu_1\) and let \((x_2,y_2)\) be the one at the end of \(\bu_1 \overline{\bz}ab \bz\).
	We have 
	\begin{align*}
		\delta_\bw(x_1,y_1)+\delta_\bw(x_2,y_2) &=  \frac{d(y_1+y_2)-n(x_1+x_2)}{d} 
		& =  \frac{d(2y+1)-n(2x-1)}{d} \\
		& = 2 \delta_\bw(x,y) +(1+n/d) \\
		&< \delta_\bw(x,y).
	\end{align*}
	Assume that \(\bu_1\) or \(\bv_1\) is the empty word. In these cases, one of the two terms \(\delta_\bw(x_1,y_1)\) and \(\delta_\bw(x_2,y_2) \) is zero. Consequently the other one would be smaller than \(\delta_\bw(x,y)\) which would contradict the minimality. So we are not in the third case of Lemma~\ref{lem:simpleFlip}.
	
	Assume now that \(\bu_1 = \bu_2 b\) and \(\bv_1 = a\bv_2\). Let \((x_3,y_3)\) be the endpoint of \(\bu_2\) and \((x_4,y_4)\) be the one of \(\bu_2 b \overline{\bz} ab \bz a\). 
	Hence,
	\begin{align*}
		\delta_\bw(x_3,y_3)+\delta_\bw(x_4,y_4) &=  \frac{d(y_3+y_4)-n(x_3+x_4)}{d} 
		& =  \frac{2dy-2nx}{d} \\
		& = 2 \delta_\bw(x,y).
	\end{align*}
	By minimality of \(\delta_\bw(x,y)\), it would imply that \(\delta_\bw(x_3,y_3) = \delta_\bw(x_4,y_4)=\delta_\bw(x,y)\). But, as \(x_3<x\), it contradicts the minimality of the couple \((\delta_\bw(x,y),x)\).
	
	So we are in case 1 of Lemma~\ref{lem:simpleFlip}. Hence, as \(\bw = \bu^\prime ab \bv^\prime\),
	\begin{equation}\label{eq:lifting1}
		m(\bw) \geq m(\bu^\prime ba \bv^\prime).
	\end{equation}
	
	By construction, \(\bel(\bu^\prime ba \bv^\prime) = \bel(\bw) \cup \{(x-1,y+1)\}\) and \(\delta_\bw(x-1,y+1) = \delta_\bw(x,y) +1+n/d < 0\). So,
	\(\lift(\bu^\prime ba \bv^\prime) = \lift(\bw)\). Finally, by induction hypothesis, 
	\begin{equation}\label{eq:lifting2}
		m(\bu^\prime ba \bv^\prime) \geq m(\lift(\bw)).
	\end{equation}
	Combining Equations (\ref{eq:lifting1}) and (\ref{eq:lifting2}) concludes the induction and so proves the lemma.
\end{proof}

We are now ready to find the minimal words for the \(m\)-value (when \(n\) and \(d\) are fixed). It will identify the values \(m(c(n,d))\) to the values \(m_{n,d}\) defined in the Introduction. As stated before, we will need to consider this time composite flips instead of simple flips.

Let \((d,n)\in(\NN\setminus\{0\})^2\) with \(\delta = \gcd(d,n)\). We  recall that
\[
	c(n,d) = (\chris{n/d})^\delta
\]
and we know that \(\chris{n/d}\) is of the form \(a\bp_{n/d}b\) where \(\bp_{n/d}\) is a palindrome.

\begin{proposition}\label{prop:minimality}
	Let \((d,n)\in\NN^2\) with \(\delta = \gcd(d,n)\). Let \(\bw\) be a path which ends at \((d,n)\). Then,
		\[
			m(\bw) \geq m(c(n,d)).
		\]
	Moreover,  it is an equality if and only if 
	\[
		\bw = c(n,d) = (a\bp_{n/d}b)^\delta \text{ or } \begin{cases}
			\delta \geq 2 \\
			\bw = (a\bp_{n/d}a)(b\bp_{n/d}a)^{\delta-2} (b\bp_{n/d}b).
		\end{cases}
	\]
\end{proposition}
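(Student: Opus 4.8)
The plan is to combine the two monotonicity lemmas (flattening and lifting) with the composite-flip analysis. First I would dispose of the boundary cases $d=0$ or $n=0$: there $c(n,d)$ is $b^n$ or $a^d$, the path from $(0,0)$ to $(d,n)$ is unique, so the statement is trivial. So assume $d,n\geq 1$ and set $\delta=\gcd(d,n)$, $d=\delta d'$, $n=\delta n'$ with $\gcd(d',n')=1$.

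The first main step is the inequality $m(\bw)\geq m(c(n,d))$. Given any path $\bw$ ending at $(d,n)$, apply Lemma~\ref{lem:flattening} to get $m(\bw)\geq m(\flat(\bw))$, where $\flat(\bw)$ is now a path lying weakly below the line. Then apply Lemma~\ref{lem:lifting} to $\flat(\bw)$: since lifting only adds points with $\delta<0$ (strictly below the line) and $\flat(\bw)$ already contains every lattice point strictly above excluded, the lifting fills in everything strictly below the line, so $\bel(\lift(\flat(\bw))) = T_{n,d}\setminus\{\text{points on the open segment above... }\}$ — more precisely $\lift(\flat(\bw))$ is exactly the hull of $T_{n,d}$ minus possibly the interior lattice points of the segment, which is $c(n,d)$ when $\delta=1$ and is close to $(\chris{n/d})^\delta$ in general. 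I would need to check carefully that $\lift(\flat(\bw))$ has a fixed value $m$ regardless of $\bw$; the cleanest route is: after flattening and lifting, the resulting path is determined by its below-set, which is $T_{n,d}$ with the $\delta-1$ interior segment points removed. So the composition of flattening then lifting sends \emph{every} path ending at $(d,n)$ to one single path $\bw_0$, and $m(\bw)\geq m(\bw_0)$. The remaining task is to compute $m(\bw_0)$ and compare it with $m(c(n,d))$; here $\bw_0 = (a\bp_{n/d})^{\delta-1}(a\bp_{n/d}b)$ morally, and one shows by Lemma~\ref{lem:ab} / the palindrome identity following Lemma~\ref{lem:compositeFlip} that $m(\bw_0)=m(c(n,d))$. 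Actually the subtle point is that $\bw_0$ need \emph{not} equal $c(n,d)$; it is the variant where the $\delta-1$ segment points are excluded, and one proves $m(\bw_0)=m(c(n,d))$ via composite flips with palindromic middle.

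The second, harder, step is the equality characterization. Suppose $m(\bw)=m(c(n,d))$. From Lemma~\ref{lem:flattening}, equality forces $\flat(\bw)=\bw$, i.e. $\bw$ lies weakly below the line. From Lemma~\ref{lem:lifting} applied to $\bw$, $m(\bw)\geq m(\lift(\bw))\geq m(c(n,d))$ forces $\lift(\bw)=\bw$, so $\bw$ contains every lattice point strictly below the line. Hence $\bel(\bw)$ is squeezed: $T_{n,d}\setminus\{\text{interior segment points}\}\subseteq \bel(\bw)\subseteq T_{n,d}$. When $\delta=1$ there are no interior segment points and $\bw=c(n,d)$ exactly. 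When $\delta\geq 2$, the $\delta-1$ interior points $(id',in')$, $1\leq i\leq \delta-1$, may or may not be in $\bel(\bw)$; so $\bw$ differs from $c(n,d)$ only by choosing, at each such segment lattice point, to pass above it or through it. This gives $2^{\delta-1}$ candidate paths. The remaining work is to compute the $m$-value of each and show only two of them achieve the minimum: $c(n,d)=(a\bp_{n/d}b)^\delta$ (all points on the segment included) and $(a\bp_{n/d}a)(b\bp_{n/d}a)^{\delta-2}(b\bp_{n/d}b)$ (all points excluded). For a "mixed" path one uses a composite flip of the factor $a\bp_{n/d} b \cdot a\bp_{n/d} b$ versus $a\bp_{n/d} a\cdot b\bp_{n/d} b$ at a place where an "included" block is adjacent to an "excluded" block, and invokes Lemma~\ref{lem:compositeFlip}: because $\bp_{n/d}$ is a palindrome, $\overline{\bz}=\bz$ for $\bz = \bp_{n/d} b a \bp_{n/d}$-type factors, and the strict cases (2) or (3) of that lemma apply unless the mixed part sits entirely at one end, which is exactly the two extremal shapes. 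The main obstacle I anticipate is precisely this bookkeeping: organizing the $2^{\delta-1}$ possibilities, identifying which adjacent pair of blocks to flip, and verifying one always lands in a \emph{strict} case of Lemma~\ref{lem:compositeFlip} unless the configuration is one of the two claimed extremal ones — in other words, showing the two extremal words are the \emph{only} fixed points of all the relevant composite flips.
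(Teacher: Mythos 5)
You follow the paper's overall route (flatten, then lift, then analyse the lattice points of the segment by composite flips), but there is a genuine error at the hinge of your inequality step. The composition \(\lift\circ\flat\) does \emph{not} send every path ending at \((d,n)\) to one single word \(\bw_0\): flattening removes only the points with \(\delta_\bw>0\) and lifting adds only the points with \(\delta_\bw<0\), so each of the \(\delta-1\) interior segment points (where \(\delta_\bw=0\)) survives in the below-set exactly when \(\bel(\bw)\) contained it --- for instance \(\lift(\flat(c(n,d)))=c(n,d)\), not your all-points-excluded word. Hence \(\lift(\flat(\bw))\) is one of \(2^{\delta-1}\) words \(\bu_\beps=a\bp_{n/d}\bz_{\varepsilon_1}\bp_{n/d}\cdots\bz_{\varepsilon_{\delta-1}}\bp_{n/d}b\) with \(\bz_{\pm1}\in\{ba,ab\}\), determined by which segment points \(\bw\) passes through (also, your ``moral'' formula \((a\bp_{n/d})^{\delta-1}(a\bp_{n/d}b)\) is \(\delta-1\) letters too short to reach \((d,n)\)). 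Consequently the inequality \(m(\bw)\ge m(c(n,d))\) cannot be obtained by evaluating one canonical word: it already requires comparing all \(2^{\delta-1}\) configurations, which you postpone to the equality discussion; and your equality discussion in turn invokes the inequality, so as structured the argument is incomplete.

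That comparison --- only the two constant sign patterns attain the minimum --- is the actual heart of the proof, and you leave it as ``anticipated bookkeeping''. It is not automatic: one must choose the flip so that Lemma~\ref{lem:compositeFlip} applies in a strict case, and your sketch (``cases (2) or (3) unless the mixed part sits at one end'') is not quite how it works. The paper argues by induction on the number of sign changes of \(\beps\): at the \emph{first} sign change, at index \(j\), the word is \(a\bp_{n/d}a(b\bp_{n/d}a)^{j-2}b\bp_{n/d}ba\bv\) or \(a\bp_{n/d}b(a\bp_{n/d}b)^{j-2}a\bp_{n/d}ab\bv\); one flips the composite factor \(a(b\bp_{n/d}a)^{j-2}b\) (case 1 of Lemma~\ref{lem:compositeFlip}, strict because the suffix \(a\bv\) is nonempty), respectively \(b(a\bp_{n/d}b)^{j-2}a\) (case 3), each time strictly decreasing the \(m\)-value while reducing the number of sign changes by one; the all-excluded word is then tied with \(c(n,d)\) by case 4 (or Lemma~\ref{lem:ab}). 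If you prove this sign-change induction first and let it feed both the inequality and the equality characterization (using the strictness statements of Lemmas~\ref{lem:flattening} and~\ref{lem:lifting} to force \(\bw=\lift(\flat(\bw))=\bu_\beps\) in the equality case), you recover the paper's proof.
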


In particular, it can be noticed this also implies Conjecture 10.16 from~\cite{Aig15} (with our notations, this conjecture states that for \(n\leq d\), \(c(n,d)\) has minimal \(m\)-value among all paths which do not contain the factor `bb'). More information can be found in Appendix~\ref{app:gFunc}.

\begin{proof}
	Let \(\bw \in \{a,b\}^\star\) with \(d\) letters  `a' and \(n\) `b'. Let us consider \(\lift(\flat(\bw))\). By Lemmas~\ref{lem:flattening} and~\ref{lem:lifting}, we know
	\[
		m(\bw) \geq m(\lift(\flat(\bw))).
	\]
	During the flattening operation, we removed from \(\bel(\bw)\) every point which stand strictly above the line from \((0,0)\) to \((d,n)\). Then, during the lifting operation, we add all points which stands strictly below this line.
	
	Hence, apart from the intermediate points which are exactly on the line, the set \(\bel(\lift(\flat(\bw)))\) corresponds to the set \(T_{n,d}\). Consequently, in the case where \(\gcd(d,n)=1\), there are no integers point (except for the extremities) on the segment from \((0,0)\) to \((d,n)\), and so \(\bel(\lift(\flat(\bw))) = T_{n,d}\), which means that \(\lift(\flat(\bw)) = c(n,d)\) and the proposition is verified.
	
	So, we can assume now that \(\delta = \gcd(d,n) >1\). There are exactly \(\delta-1\) intermediate points which stand on the segment from \((0,0)\) to \((d,n)\): these points are \[
	(u_i,v_i) \defeq \left(i\frac{d}{\delta},i\frac{n}{\delta} \right)\ \text{ for } i \in \{1,2,\ldots,\delta-1\}.
	\] 
	In function if \(\bw\) goes through the point \((u_i,v_i)\) or not, \(\lift(\flat(\bw)) \) is of the form:
	\[
		a \bp_{n/d}\left\{\begin{array}{c} ba \\ ab \end{array}\right\} \bp_{n/d} \left\{\begin{array}{c} ba \\ ab \end{array}\right\}  \bp_{n/d} \cdots \bp_{n/d}\left\{\begin{array}{c} ba \\ ab \end{array}\right\} \bp_{n/d} b 
	\]
	where the notation \(\left\{\begin{array}{c} ba \\ ab \end{array}\right\}\) which appears \(\delta-1\) times means `\(ab\) or \(ba\)'.

	For simplicity of notation, let us define
	\[
		\bz_1 = ba \ \text{ and }\ \bz_{-1}=ab.
	\]
	Let \( \beps = (\varepsilon_i)_{i \in \{1,\ldots,\delta-1\}} \in \{\pm 1\}^{\delta-1}\). We consider the words
	\[
		\bu_{\beps} = a \bp_{n/d} \bz_{\varepsilon_1} \bp_{n/d} \bz_{\varepsilon_2} \bp_{n/d} \cdots \bp_{n/d} \bz_{\varepsilon_{\delta-1}} \bp_{n/d} b.
	\]
	In particular, if \(\beps\) is defined by \(\varepsilon_i = 1\) if and only if \(\bw\) goes through the point \((u_i,v_i)\), the word \(\bu_{\beps}\) is exactly the word \(\lift(\flat(\bw))\).
	
	Let us show that among all the words \(\bu_{\beps}\) for different values of \(\beps\), the \(m\)-value is minimal if \(\beps\) is a constant sequence.
	\begin{itemize}
		\item First, if for all \(i\), \(\varepsilon_i =1\), we have
		\[
		\bu_{\beps} = (a\bp_{n/d} b)^\delta = c(n,d).
		\]
		\item Then if for all \(i\), \(\varepsilon= -1\), we have
		\[
		\bu_{\beps} = (a\bp_{n/d} a) (b\bp_{n/d} a)^{\delta-2}(b\bp_{n/d} b) 
		\]
		which are the words which appear in the statement of the proposition. Their \(m\)-value equals the one of the corresponding Christoffel words: using the fourth point of Lemma~\ref{lem:compositeFlip}, we have
		\begin{align*}
			m((a\bp_{n/d} a) (b\bp_{n/d} a)^{\delta-2}(b\bp_{n/d} b)) & = m((a\bp_{n/d} b) (a\bp_{n/d} b)^{\delta-2}(a\bp_{n/d} b)) \\
			& = m(c(n,d))
		\end{align*}
		(one can notice that we could also use Lemma~\ref{lem:ab}).)
		\item Otherwise the number of sign changes in the sequence \((\varepsilon_i)\) is at least \(1\). Let us show by induction on this number of sign changes that \(m(\bu_{\beps}) > m(c(n,d))\). Let \(j \geq 2\) be the index of the first sign change, i.e., \(\varepsilon_1 = \ldots = \varepsilon_{j-1} = - \varepsilon_j\). So, \(\bu_{\beps}\) is of one of the two forms (depending on the sign of \(\varepsilon_1\))
		\begin{align*}
			& a \bp_{n/d} a (b \bp_{n/d} a)^{j-2} b \bp_{n/d} ba \bv \ \text{ where }\bv \in \{a,b\}^\star, \\
			\text{or, } & a \bp_{n/d} b (a \bp_{n/d} b)^{j-2} a \bp_{n/d} ab \bv \ \text{ where }\bv \in \{a,b\}^\star.
 		\end{align*}
 	For the first form, we can apply the first point of Lemma~\ref{lem:compositeFlip} on the factor \(a(b\bp_{n/d}a)^{j-2}b\). As the suffix \(a\bv\) is non empty, the inequality is strict. And for the second form, we can apply the third point of Lemma~\ref{lem:compositeFlip} on the factor \(b(a\bp_{n/d}b)^{j-2}a\). In both cases, we finish with the word \(\bu_{(-\varepsilon_1,\ldots,-\varepsilon_{j-1},\varepsilon_j,\ldots)}\) where the number of change of signs decreased by one. 
	\end{itemize}

\end{proof}

\subsection{Fixed sum conjecture}

Finally, we can also get the fixed sum conjecture:
\begin{proposition}\label{prop:FSC}
	Let \((d,n) \in \NN^2\) such that \(d \geq n+1\). We have \(m(c(n+1,d)) < m(c(n,d+1))\).
\end{proposition}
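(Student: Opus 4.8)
The plan is to compare the two Christoffel-like words $c(n+1,d)$ and $c(n,d+1)$ by relating both of them, via letter insertions/deletions and a flattening, to a common intermediate word, so that Lemma~\ref{lem:flattening} (and possibly Lemma~\ref{lem:simpleFlip}) gives the strict inequality. Since $d\geq n+1$, the fraction $(n+1)/d$ and $n/(d+1)$ both lie in $[0,1]$, and the path $c(n,d+1)$ reaches $(d+1,n)$ while $c(n+1,d)$ reaches $(d,n+1)$. The key observation is that both paths have the same number of steps, $d+n+1$; $c(n,d+1)$ has one more `a' and one fewer `b' than $c(n+1,d)$. So I would look for a word $\bw$ ending at $(d+1,n)$ obtained from $c(n+1,d)$ by a simple local modification whose effect on the $m$-value is an \emph{increase}, and such that $\flat(\bw) = c(n,d+1)$; then Lemma~\ref{lem:flattening} closes the argument: $m(c(n,d+1)) = m(\flat(\bw)) \le m(\bw)$, and the strictness comes either from $\flat(\bw)\neq\bw$ or from the first modification being strict.

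Concretely, first I would handle the degenerate cases $n=0$ (so $d\geq 1$) directly using the Fibonacci identities: $m(c(1,d)) = \Fibo_{2d+1}$ and $m(c(0,d+1)) = \Fibo_{2d+2}$, and $\Fibo_{2d+1} < \Fibo_{2d+2}$. For the main case $d\geq n+1\geq 2$, I would take $c(n+1,d)$, which ends in `b', write it as $\bw' b$, and consider replacing the final `b' by `a' — or, better, inserting an `a' somewhere and deleting a `b' — to land at the point $(d+1,n)$. The cleanest route: note $c(n+1,d)$ lies below the segment to $(d,n+1)$; the segment to $(d+1,n)$ has smaller slope, so $c(n+1,d)$ lies (weakly) below it too, which means $c(n+1,d)$ with an `a' appended, i.e.\ $\bw \defeq c(n+1,d)\,a$, is a path to $(d+1,n+1)$ — wrong endpoint. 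Instead I would append an `a' and then delete one of the `b' steps that protrudes above the new target line; equivalently, take $\bw$ to be the word with $\bel(\bw) = T_{n,d}\cup\{\text{the column-}d \text{ point at height } n+1\}$ read as a path to $(d+1,n)$ — one must check this is realizable as a genuine lattice path and that passing from $c(n+1,d)$ to this $\bw$ strictly increases $m$ (an append/prepend of a letter changes $m$ by a matrix computation of the type already done in the proof of Proposition~\ref{prop:FDC}, e.g.\ $m(\bw x) - m(\bw) = \begin{pmatrix}1&0\end{pmatrix}M^{\bw}(M^x - I)\begin{pmatrix}0\\1\end{pmatrix}$), and that $\flat(\bw) = \hull(\bel(\bw)\cap T_{n,d+1}) = \hull(T_{n,d+1}) = c(n,d+1)$ because $T_{n,d}\cup\{(d,n+1)\}\supseteq$ exactly the right set once intersected with $T_{n,d+1}$.

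The main obstacle I expect is bookkeeping the endpoint correctly: $c(n+1,d)$ and $c(n,d+1)$ do not end at the same lattice point, so unlike Proposition~\ref{prop:FDC} one cannot simply strip a last letter; one must find the right single-step surgery (append an `a', and account for the fact that the word now has one `b' too many relative to $c(n,d+1)$) and verify that the flattening of the surgered word is exactly $c(n,d+1)$ — this requires checking that $\bel(\bw)\cap T_{n,d+1} = T_{n,d+1}$, i.e.\ that $T_{n,d}\subseteq T_{n,d+1}$ (true, since the slope $n/(d+1) < (n+1)/d$ but one needs $u\le d \Rightarrow dv\le un \Rightarrow (d+1)v \le un + v \le un + \dots$, which holds for the integer points with $v \le n$) together with one extra point in column $d$ that survives. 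I would also double-check that the inequality is strict: either the appended letter already gives $m(\bw) > m(c(n+1,d))$, or, if the surgery happens to be $m$-preserving, then $\flat(\bw)$ must differ from $\bw$ (since $\bw$ protrudes above the $T_{n,d+1}$ line at the column-$d$ point of height $n+1$), so Lemma~\ref{lem:flattening} gives the strict drop. Either way we obtain $m(c(n,d+1)) \le m(\bw)$ with at least one of the two inequalities in the chain $m(c(n+1,d)) \le m(\bw) \ge m(\flat(\bw)) = m(c(n,d+1))$ strict — and here care is needed, since the middle inequality points the "wrong" way; so in fact the robust version is to arrange $\bw$ so that $m(c(n+1,d)) < m(\bw)$ \emph{and} $m(\flat(\bw)) \le m(\bw)$ do not suffice. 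The correct logical skeleton must instead go: find $\bw$ ending at $(d+1,n)$ with $\flat(\bw) = c(n,d+1)$ and $m(\bw) \le m(c(n,d+1)) + (\text{something})$ — more safely, run the comparison the other direction, starting from $c(n,d+1)$, deleting an `a' to reach a word ending at $(d,n)$, lifting or flattening to reach $c(n+1,d)$, and tracking that $m$ strictly increased; I would pick whichever of these two directions makes the flattening/lifting land exactly on the target, and that determinacy check is the crux of the proof.
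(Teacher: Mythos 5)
There is a genuine gap, and you in effect concede it yourself at the end: none of the routes you sketch produces a chain of inequalities pointing the right way, and the ``determinacy check'' you defer is exactly the missing heart of the proof. Concretely: flattening and lifting preserve the endpoint of a path, so they can never carry a word ending at \((d+1,n)\) to one ending at \((d,n+1)\) or vice versa; your fallback (``delete an `a' from \(c(n,d+1)\), then lift or flatten to reach \(c(n+1,d)\)'') therefore cannot work --- after deleting an `a' you are at \((d,n)\), and flattening/lifting keeps you there, so you can only recover \(m(c(n,d))<m(c(n,d+1))\), i.e.\ Corollary~\ref{prop:FNC}. Your main construction is also not realizable: a set of the form \(T_{n,d}\cup\{(d,n+1)\}\) cannot be \(\bel(\bw)\) for a path \(\bw\) ending at \((d+1,n)\), since paths are monotone (a path whose below-set contains a point at height \(n+1\) ends at height at least \(n+1\)) and the below-set of a path to \((d+1,n)\) must have width \(d+1\). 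Finally, the chain you correctly identify as problematic, \(m(c(n+1,d))<m(\bw)\) together with \(m(\flat(\bw))\le m(\bw)\), indeed proves nothing, because both target quantities sit below \(m(\bw)\).

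What is missing is an explicit surgery that turns \(c(n,d+1)\) into a path ending at the \emph{other} endpoint \((d,n+1)\) while \emph{strictly decreasing} the \(m\)-value; once you have such a path \(\bv\), Proposition~\ref{prop:minimality} finishes immediately, since \(m(\bv)\ge m(c(n+1,d))\). This is what the paper does: since \(d+1-n\ge 2\) and \(c(n,d+1)\) contains no factor `bb' and ends in `ab', one can write \(c(n,d+1)=\bw a(ab)^k ab\) with \(\bw\) non-empty and \(k\) maximal; replacing the suffix \(a(ab)^k ab\) by \(ab(ab)^k b\) yields a path to \((d,n+1)\), and the identity \((AB)^kA-B(AB)^k=\Pell_{2k+1}\,\mathrm{diag}(-1,1)\) (Claim~\ref{clm:eq-fsc}) shows the \(m\)-value strictly drops under this replacement. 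Your proposal contains neither this surgery nor any substitute for it, and a substitute really is needed: exchanging an `a' for a `b' mixes a step that lowers \(m\) (removing an `a') with one that raises it (inserting a `b'), so the sign of the net change is exactly the point that must be argued, not assumed.
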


Let us start by proving the following claim:
\begin{claim}\label{clm:eq-fsc}
	Let \(n \in \NN\), then 
	\[
	 (AB)^nA - B(AB)^n  = \Pell_{2n+1} \begin{pmatrix} -1 & 0\\ 0 & 1
	\end{pmatrix} 
	\]
	where \(\Pell_k\) is the \(k^{\textrm{th}}\) Pell number.
\end{claim}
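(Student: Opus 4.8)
The plan is to prove Claim~\ref{clm:eq-fsc} by induction on \(n\), since \((AB)^n\) appears and we already have the explicit formula~\eqref{eq:Pell} for its entries. First I would check the base case \(n=0\): there \((AB)^0 A - B (AB)^0 = A - B = \begin{pmatrix} 1&1\\1&2\end{pmatrix} - \begin{pmatrix} 2&1\\1&1\end{pmatrix} = \begin{pmatrix} -1 & 0 \\ 0 & 1\end{pmatrix}\), which matches since \(\Pell_1 = 1\). For the inductive step, I would write \((AB)^{n+1}A - B(AB)^{n+1} = (AB)\bigl((AB)^nA - B(AB)^n\bigr)(AB)\)? — no, that is not quite right because the \(B\) sits on the left. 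Instead the clean route is to use the explicit matrix form: substitute \(M \defeq (AB)^n = \begin{pmatrix} \Pell_{2n+1}-\Pell_{2n} & \Pell_{2n} \\ 2\Pell_{2n} & \Pell_{2n+1}-\Pell_{2n}\end{pmatrix}\) from~\eqref{eq:Pell}, compute \(MA = M\begin{pmatrix}1&1\\1&2\end{pmatrix}\) and \(BM = \begin{pmatrix}2&1\\1&1\end{pmatrix}M\) directly, and subtract.

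Carrying that out: with \(M = \begin{pmatrix} \alpha & \beta \\ \gamma & \alpha \end{pmatrix}\) where \(\alpha = \Pell_{2n+1}-\Pell_{2n}\), \(\beta = \Pell_{2n}\), \(\gamma = 2\Pell_{2n} = 2\beta\), one gets \(MA = \begin{pmatrix} \alpha+\beta & \alpha+2\beta \\ \gamma+\alpha & \gamma+2\alpha\end{pmatrix}\) and \(BM = \begin{pmatrix} 2\alpha+\gamma & 2\beta+\alpha \\ \alpha+\gamma & \beta+\alpha\end{pmatrix}\). Subtracting, the off-diagonal entries cancel precisely when \(\gamma = 2\beta\) (which holds), and on the diagonal we obtain \(MA - BM = \begin{pmatrix} \alpha+\beta - 2\alpha - \gamma & 0 \\ 0 & \gamma + 2\alpha - \beta - \alpha\end{pmatrix} = \begin{pmatrix} \beta - \alpha - \gamma & 0 \\ 0 & \alpha + \gamma - \beta\end{pmatrix}\). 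Using \(\gamma = 2\beta\), the lower-right entry is \(\alpha + 2\beta - \beta = \alpha + \beta = (\Pell_{2n+1}-\Pell_{2n}) + \Pell_{2n} = \Pell_{2n+1}\), and the upper-left entry is its negative. Note this computation simultaneously handles all \(n\), so one does not even need induction once~\eqref{eq:Pell} is invoked — but I would present it as either a one-shot substitution or a short induction, whichever reads more cleanly alongside the earlier Pell computations.

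The only mild subtlety, and the step I would be most careful about, is making sure the indices line up: \((AB)^n\) is written in~\eqref{eq:Pell} with the \(2\Pell_{2n}\) in the lower-left corner, so when I multiply by \(A\) on the right versus \(B\) on the left the asymmetry \(B = \begin{pmatrix}2&1\\1&1\end{pmatrix}\) versus \(A = \begin{pmatrix}1&1\\1&2\end{pmatrix}\) is exactly what forces the result to be diagonal rather than merely antisymmetric-plus-something. A quick sanity check at \(n=1\): \((AB)^1 A - B(AB)^1\); since \(AB = \begin{pmatrix}3&2\\4&3\end{pmatrix}\), we get \(ABA = \begin{pmatrix}3&2\\4&3\end{pmatrix}\begin{pmatrix}1&1\\1&2\end{pmatrix} = \begin{pmatrix}5&7\\7&10\end{pmatrix}\) and \(BAB = \begin{pmatrix}2&1\\1&1\end{pmatrix}\begin{pmatrix}3&2\\4&3\end{pmatrix} = \begin{pmatrix}10&7\\7&5\end{pmatrix}\), whose difference is \(\begin{pmatrix}-5&0\\0&5\end{pmatrix} = \Pell_3\begin{pmatrix}-1&0\\0&1\end{pmatrix}\) since \(\Pell_3 = 5\). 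This confirms both the sign convention and the claim, after which writing up the general argument is routine.
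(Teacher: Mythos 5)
Your proposal is correct and follows essentially the same route as the paper's proof: substitute the explicit form of \((AB)^n\) from Equation~(\ref{eq:Pell}), multiply by \(A\) on the right and by \(B\) on the left, and subtract, which yields \(\Pell_{2n+1}\) times the stated diagonal matrix. (The only immaterial slip is your parenthetical claim that the off-diagonal cancellation requires \(\gamma = 2\beta\); in fact those entries vanish for any matrix with equal diagonal entries, so this does not affect the argument.)
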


\begin{proof}
	In Equation~(\ref{eq:Pell}), we saw that
	\[
	(AB)^n = 
	\begin{pmatrix} 
		\Pell_{2n+1}-\Pell_{2n} & \Pell_{2n}\\ 2\Pell_{2n} & \Pell_{2n+1}-\Pell_{2n}
	\end{pmatrix}.
	\]
	Therefore we can conclude that for any \(n \in \NN\)
	\begin{align*}
		(AB)^nA - B(AB)^n &= \begin{pmatrix} 		
			\Pell_{2n+1}-\Pell_{2n} & \Pell_{2n}\\ 2\Pell_{2n} & \Pell_{2n+1}-\Pell_{2n}
		\end{pmatrix} \begin{pmatrix} 1 & 1\\ 1 & 2
		\end{pmatrix}  \\
	& \quad \quad	- 
	\begin{pmatrix} 2 & 1\\ 1 & 1
	\end{pmatrix} \begin{pmatrix} 		
		\Pell_{2n+1}-\Pell_{2n} & \Pell_{2n}\\ 2\Pell_{2n} & \Pell_{2n+1}-\Pell_{2n}
	\end{pmatrix}\\
		& = \begin{pmatrix} 		
			\Pell_{2n+1} & \Pell_{2n+1}+\Pell_{2n} \\
			\Pell_{2n+1}+\Pell_{2n} & 2\Pell_{2n+1}
		\end{pmatrix} \\
		& \quad \quad	- 
	\begin{pmatrix} 		
		2\Pell_{2n+1} & \Pell_{2n+1}+\Pell_{2n} \\ 
		\Pell_{2n+1}+\Pell_{2n} & \Pell_{2n+1}
	\end{pmatrix} \\
		&= \begin{pmatrix} -P_{2n+1} & 0\\ 0 & P_{2n+1}
		\end{pmatrix}.
	\end{align*}

\end{proof}

We can now prove the fixed sum conjecture.

\begin{proof}[Proof of Proposition~\ref{prop:FSC}]
		By hypothesis, \(d\geq 1\). If \(n=0\), then \(m(c(n+1,d)) = \Fibo_{2d+1} < \Fibo_{2d+2}=m(c(n,d+1))\).
		
		So we assume that \(n\geq 1\).
		Let us consider \(c(n,d+1)\). Since \(d+1 > n\), the word \(c(n,d+1)\) ends with `ab' and does not contain `bb' as a factor. 
	
	Let \(k \in \NN\) be maximal such that \((ab)^k ab\) is a suffix of \(c(n,d+1)\) (when \(k=0\), \((ab)^k ab = ab\) is really a suffix, so \(k\) is well defined). 
	
	We know that \(\lvert c(n,d+1)\rvert_a  - \lvert c(n,d+1)\rvert_b = d+1 - n \geq 2\). So \((ab)^k ab\) is a proper suffix. As `bb' is not a factor of \(c(n,d+1)\) and due to maximality of \(k\), the word \(c(n,d+1)\) has also a suffix \(a(ab)^k ab\) and this suffix is proper as well. So we can write \(c(n,d+1) = \bw a (ab)^k ab\) for a non-empty factor \(\bw\). By Claim~\ref{clm:eq-fsc},
	\begin{align*}
		m(\bw a (ab)^k ab)-m(\bw a b(ab)^k b) 
		& = \begin{pmatrix}
			1 & 0
		\end{pmatrix} M^\bw A \left( (AB)^kA - B(AB)^k \right) B \begin{pmatrix}
			0 \\1
		\end{pmatrix} \\
		& = \Pell_{2k+1} \begin{pmatrix}
			1 & 0
		\end{pmatrix} M^\bw  A \begin{pmatrix} -1&0 \\ 0&1\end{pmatrix} B \begin{pmatrix}
			0 \\1
		\end{pmatrix} \\
		& = \Pell_{2k+1} \begin{pmatrix}
			1 & 0
		\end{pmatrix} M^\bw  \begin{pmatrix} -1&0 \\ 0&1 \end{pmatrix} \begin{pmatrix}
			0 \\1
		\end{pmatrix} \\
		& >0.
	\end{align*}
	
	However, \(\bw ab(ab)^k b\) has same endpoint than \(c(n+1,d)\). So by Proposition~\ref{prop:minimality}, 
	\[
	m(\bw ab(ab)^k b) \geq m(c(n+1,d)).
	\]
	
\end{proof}

\appendix
\bibliographystyle{ieeetr}
\bibliography{Markov}

\appendix

\section{How to go from Cohn words in~\cite{Aig15} to Proposition~\ref{prop:cohnMarkov} in this paper}\label{app:CohnW}

We will write \(\QQ_{0,1}\) for \(\QQ \cap [0,1]\).

In~\cite{Aig15}, the author associates to each word from the alphabet \(\{\mathfrak{a},\mathfrak{b}\}^\star\) a matrix in \(\rm{SL}_2(\ZZ)\) given by the morphism \(\phi\) generated by \(\phi(\mathfrak{a}) = \mathfrak{A} = \begin{pmatrix} 1 & 1 \\ 1&2\end{pmatrix} = A\) and \(\phi(\mathfrak{b}) = \mathfrak{B} = \begin{pmatrix} 3 & 2 \\ 4 & 3\end{pmatrix} = AB\). In particular the Cohn tree is defined by starting from the triplet \((\mathfrak{A},\mathfrak{AB},\mathfrak{B})\) and by applying the following rule. Given any node associated to a triplet \((\mathfrak{C},\mathfrak{CD},\mathfrak{D})\), one introduces the two new children triplets (in the given order)  \((\mathfrak{C},\mathfrak{CCD},\mathfrak{CD})\) and \((\mathfrak{CD},\mathfrak{CDD},\mathfrak{D})\).

The Farey tree is defined similarly by starting from the triplet \((0/1,1/2,1/1)\) and by applying this following rule: given any node labeled by the triplet \((p_1/q_1,p_2/q_2,p_3/q_3)\) the two children are given by \((p_1/q_1,(p_1+p_2)/(q_1+q_2),p_2/q_2)\) and \((p_2/q_2,(p_2+p_3)/(q_2+q_3),p_3/q_3)\). It is proved that any fraction in \(\QQ_{0,1}\) appears once and exactly once as a second element of a triplet of the Farey tree.

For \(t \in \QQ_{0,1}\), they define \(C_t\) as the matrix which appears as the second element of the triplet which is at the same position in the Cohn tree than \(t\) is in the Farey tree. They define by \(W_t\) the word over the alphabet \(\{\mathfrak{a},\mathfrak{b}\}\) corresponding to the product of matrices associated to the matrix \(C_t\).

Let \(G\) be the word morphism given by 
\begin{align*}
	G : \{\mathfrak{a},\mathfrak{b}\}^\star & \rightarrow \{a,b\}^\star \\
		\mathfrak{a} & \mapsto a \\
		\mathfrak{b} & \mapsto ab. 
\end{align*}
So it implies that for \(\mathbb{\mathfrak{w}} \in \{\mathfrak{a},\mathfrak{b}\}^\star\), \(\phi(\mathbb{\mathfrak{w}}) = M^{G(\mathbb{\mathfrak{w}})}\). In particular for any \(t \in \QQ_{0,1}\), \(C_t = \phi(W_t) = M^{G(W_t)}\). As the set of Markov numbers is exactly the set \(\bigcup_{t \in \QQ_{0,1}} \left\{ \begin{pmatrix} 1&0\end{pmatrix} C_t \begin{pmatrix} 0 \\ 1\end{pmatrix}\right\}\), we can conclude that  the Markov numbers are exactly the values of \(m(G(W_t))\) when \(t\) goes through \(\QQ_{0,1}\).

For \(p\) and \(q\) relatively prime with \(p<q\), they define \(\rm{ch}_{p/q}\) as the lower Christoffel word of slope \(p/(q-p)\). Lemma 2.2 in~\cite{BLRS09} states that \(G(\rm{ch}_{p/q})\) is the Christoffel word of slope \(p/q\) that we called \(\mathfrak{c}_{p/q}\).

Theorem 7.6 in~\cite{Aig15} is
\begin{theorem*}
	Let \(t  \in \QQ_{0,1}\). Then \(W_t = \rm{ch}_{t}\).
\end{theorem*}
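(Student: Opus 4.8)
The plan is to prove the equality by induction along the tree, by identifying three objects that are generated by the \emph{same} recursion: the tree of Cohn words, the tree of (lower) Christoffel words, and the Farey (equivalently Stern--Brocot) tree. First I would record the shape of the Cohn word tree. If a node carries the triplet of matrices \((\mathfrak{C},\mathfrak{CD},\mathfrak{D})\), write \(\mathfrak{c}\) and \(\mathfrak{d}\) for the words of \(\mathfrak{C}\) and \(\mathfrak{D}\); then the word of \(\mathfrak{CD}\) is the concatenation \(\mathfrak{c}\mathfrak{d}\), the word of \(\mathfrak{CCD}\) is \(\mathfrak{c}\mathfrak{c}\mathfrak{d}\), and the word of \(\mathfrak{CDD}\) is \(\mathfrak{c}\mathfrak{d}\mathfrak{d}\). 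Hence, at the level of words, the Cohn production rule is exactly \((u,uv,v)\mapsto(u,uuv,uv)\) and \((u,uv,v)\mapsto(uv,uvv,v)\), started from the root triplet \((\mathfrak{a},\mathfrak{a}\mathfrak{b},\mathfrak{b})\). This is precisely the classical Christoffel tree (see~\cite{BLRS09}): its middle entries run over all lower Christoffel words, and the slope of the word at a given position is the rational sitting at that position of the Stern--Brocot tree of slopes. Equivalently, for a Farey triple \((p_1/q_1,p_2/q_2,p_3/q_3)\) one has the standard factorization \(\mathrm{ch}_{p_2/q_2}=\mathrm{ch}_{p_1/q_1}\cdot\mathrm{ch}_{p_3/q_3}\), and this is what drives the tree.

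Second, I would spell out the index dictionary. The Farey tree on \(\QQ_{0,1}\), rooted at \((0/1,1/2,1/1)\), is carried by the map \(p/q\mapsto p/(q-p)\) onto the Stern--Brocot tree of all nonnegative slopes, rooted at \((0/1,1/1,1/0)\); this map sends mediants to mediants because the underlying linear map \((p,q)\mapsto(p,q-p)\) commutes with vector addition, and the Farey triples remain unimodular (\(p_1q_3-p_3q_1=\pm1\)) by induction from the root. Under this identification \(\mathrm{ch}_t\) --- the lower Christoffel word of slope \(p/(q-p)\) when \(t=p/q\) --- is exactly the Christoffel word attached to position \(t\).

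With these two facts the theorem is a short induction on depth. For the base case, at the root one has \((\mathfrak{a},\mathfrak{a}\mathfrak{b},\mathfrak{b})=(\mathrm{ch}_{0/1},\mathrm{ch}_{1/2},\mathrm{ch}_{1/1})\) directly (treating the boundary fractions \(0/1\leftrightarrow\mathfrak{a}\) and \(1/1\leftrightarrow\mathfrak{b}\) as the degenerate Christoffel words). For the inductive step, since the Cohn word tree and the Christoffel tree are generated by the same rule from the same root, their triplets agree node by node; in particular the middle word at position \(t\) of the Cohn tree, which is \(W_t\), equals the middle word at position \(t\) of the Christoffel tree, which is \(\mathrm{ch}_t\). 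Well-definedness of \(W_t\) is not an issue, since every \(t\in\QQ_{0,1}\) appears exactly once as a middle element of a Farey triple.

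The only genuine content --- and the step I would expect to be the real obstacle if one insisted on a self-contained argument --- is the classical Christoffel-tree / standard-factorization statement invoked above: that concatenating the two parent lower Christoffel words yields the lower Christoffel word of the mediant slope. Its proof is the geometric heart: it combines the unimodularity of consecutive Farey fractions with the description of the lower Christoffel path as the lattice path best approximating the segment from below, together with a Pick-type area computation showing that the relevant triangle contains no interior lattice point. Since the paper already relies on~\cite{BLRS09} throughout, I would simply cite that result and devote the write-up to making the three-tree dictionary precise.
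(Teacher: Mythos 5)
Your proposal cannot be compared against an internal argument, because the paper does not prove this statement at all: it is imported verbatim as Theorem~7.6 of~\cite{Aig15}, and the appendix only translates it (via the morphism \(G\)) into Proposition~\ref{prop:cohnMarkov}. Judged on its own, your route is sound and is the standard one. The three-tree dictionary is correct: the Cohn recursion \((\mathfrak{C},\mathfrak{CD},\mathfrak{D})\mapsto(\mathfrak{C},\mathfrak{CCD},\mathfrak{CD}),(\mathfrak{CD},\mathfrak{CDD},\mathfrak{D})\) is, at the level of words, exactly the rule \((u,uv,v)\mapsto(u,uuv,uv),(uv,uvv,v)\) with root \((\mathfrak{a},\mathfrak{ab},\mathfrak{b})\); the map \(p/q\mapsto p/(q-p)\) is induced by the unimodular map \((p,q)\mapsto(p,q-p)\), hence carries Farey mediants to Stern--Brocot mediants and sends the root \((0/1,1/2,1/1)\) to \((0/1,1/1,1/0)\), matching the definition \(\mathrm{ch}_{p/q}=\mathfrak{c}_{p/(q-p)}\); and the induction then reduces everything to the standard factorization / Christoffel-tree theorem of~\cite{BLRS09}, which it is legitimate to cite since the paper leans on that reference throughout. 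Two small points deserve a sentence in a careful write-up. First, check the orientation: in the ordered Farey triple the left entry has the smaller slope, and the standard factorization indeed puts the Christoffel word of the smaller slope on the left, so the two recursions agree letter for letter (your base case \(\mathfrak{ab}=\mathfrak{a}\cdot\mathfrak{b}\) already encodes this, but it should be said once). Second, your remark that ``well-definedness of \(W_t\) is not an issue'' addresses uniqueness of the position of \(t\), not uniqueness of the word attached to the matrix \(C_t\); either take \(W_t\) to be the word built along the tree recursion (the natural reading of the definition, in which case your induction applies directly), or note that \(\mathfrak{A}\) and \(\mathfrak{B}\) generate a free monoid so that \(\phi\) is injective. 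Neither point is a gap, only a clarification.
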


It directly implies Proposition~\ref{prop:cohnMarkov}
\begin{proposition*}
	The set of Markov numbers is the set\[
		\{m(\mathfrak{c}_t) \mid  t \in \QQ_{0,1}\}.
	\]
\end{proposition*}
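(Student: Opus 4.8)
The plan is to assemble the statement from the translation dictionary set up above together with the two external results quoted there. First I would record, purely from the definitions, that the set of Markov numbers equals $\{m(G(W_t)) \mid t \in \QQ_{0,1}\}$: indeed, Cohn's theory (as presented in~\cite{Aig15}) gives that the Markov numbers are exactly the top-right entries $\begin{pmatrix}1&0\end{pmatrix} C_t \begin{pmatrix}0\\1\end{pmatrix}$ for $t \in \QQ_{0,1}$, and the morphism identity $\phi = M \circ G$ — which it suffices to check on the two generators, since $\phi(\mathfrak{a}) = A = M^{a}$ and $\phi(\mathfrak{b}) = AB = M^{ab}$ — yields $C_t = \phi(W_t) = M^{G(W_t)}$, hence $\begin{pmatrix}1&0\end{pmatrix} C_t \begin{pmatrix}0\\1\end{pmatrix} = m(G(W_t))$.

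Next I would invoke Theorem~7.6 of~\cite{Aig15}, stated above as $W_t = \mathrm{ch}_t$, to rewrite the set as $\{m(G(\mathrm{ch}_t)) \mid t \in \QQ_{0,1}\}$. Then I would apply Lemma~2.2 of~\cite{BLRS09}: the letter-doubling morphism $G$ sends the lower Christoffel word $\mathrm{ch}_{p/q}$ of slope $p/(q-p)$ over $\{\mathfrak{a},\mathfrak{b}\}$ to the Christoffel word $\mathfrak{c}_{p/q}$ of slope $p/q$ over $\{a,b\}$. This turns the set into $\{m(\mathfrak{c}_t) \mid t \in \QQ_{0,1}\}$, which is precisely the claim. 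Since $W_t$ sits at position $t$ in the Cohn tree by construction, $\mathrm{ch}_t$ carries the same index, and $G$ preserves it, the same chain also gives the indexed form (Proposition~\ref{prop:cohnMarkov}): $m(\mathfrak{c}_{n/d})$ is the Markov number attached to $n/d$ for coprime $n \le d$.

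I do not expect a genuine obstacle; the proof is a bookkeeping argument. The one place demanding care — and the step I would double-check — is the slope convention: Aigner indexes lower Christoffel words by slope $p/(q-p)$ while our $\mathfrak{c}_{p/q}$ has slope $p/q$, and the two descriptions are reconciled exactly by $G(\mathfrak{b}) = ab$ (each $\mathfrak{b}$-step, a vertical move, becomes one horizontal plus one vertical step, shifting the slope accordingly). Verifying $\phi = M\circ G$ on generators and tracking this slope shift through Lemma~2.2 of~\cite{BLRS09} is all that the argument requires.
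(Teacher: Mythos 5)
Your proposal follows exactly the paper's own argument: identify $\phi = M \circ G$ on the generators $\mathfrak{a} \mapsto A$, $\mathfrak{b} \mapsto AB$ so that $C_t = M^{G(W_t)}$ and the Markov numbers become $\{m(G(W_t))\}$, then substitute $W_t = \mathrm{ch}_t$ (Theorem 7.6 of~\cite{Aig15}) and $G(\mathrm{ch}_{p/q}) = \chris{p/q}$ (Lemma 2.2 of~\cite{BLRS09}), including the same slope-convention bookkeeping. The proof is correct and essentially identical to the paper's.
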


\section{Links with the \(g\) function from Chapter 10  in~\cite{Aig15}}\label{app:gFunc}

Aigner introduced the three fixed parameters conjectures in Section 10.1 in his book. Following them, few pages present an approach to attack these conjectures using a particular \(g\)  function. As this approach shares similarities with ours, it can be interesting to clarify these connections. We keep notations from the previous section.

Let \(\psi\) be the morphism \(\{\mathfrak{a},\mathfrak{b}\}^\star \rightarrow \mathcal{M}_3(\ZZ)\) defined by
	\[
		\mathfrak{A}_1 \defeq \psi(\mathfrak{a}) = \begin{pmatrix}
			0 & 0 & 0 \\
			0 & 1 & 1 \\
			1 & 1 & 2
		\end{pmatrix}
	\textrm{ and }
	\mathfrak{B}_1 \defeq \psi(\mathfrak{b}) = \begin{pmatrix}
		1 & 1 & 2 \\
		0 & 0 & 0 \\
		2 & 3 & 5
	\end{pmatrix}.
	\]
	
The function \(g\) is defined for any word \(\mathbb{\mathfrak{w}}\) on the alphabet \(\{\mathfrak{a},\mathfrak{b}\}^\star\) by
\[
	g(\mathbb{\mathfrak{w}}) = \begin{pmatrix}
		2 & 3 & 5
	\end{pmatrix}
	\psi(\mathbb{\mathfrak{w}})
	\begin{pmatrix}
		0 \\ 0 \\ 1
	\end{pmatrix}.
\]

However, this matrix product \(\psi\) and this function \(g\) are just the usual product of Cohn matrices and its top-right entry but ``written in a different basis". Let us choose:
\[
P \defeq \begin{pmatrix}
	1 & 1 & 2 \\ 1 & 2 & 3
\end{pmatrix}, \quad
Q_{\mathfrak{a}} \defeq \begin{pmatrix}
	0 & 0 \\ -3 & 2 \\ 2 & -1
\end{pmatrix}, \textrm{ and }\ 
Q_{\mathfrak{b}} \defeq \begin{pmatrix}
	3 & -2 \\ 0 & 0 \\ -1 & 1
\end{pmatrix}.
\]

We have 
\[
	\mathfrak{A}_1 = Q_{\mathfrak{a}} \mathfrak{A} P, \quad
	\mathfrak{B}_1 = Q_{\mathfrak{b}} \mathfrak{B} P, \textrm{ and }\ 
	P Q_{\mathfrak{a}} = P Q_{\mathfrak{b}} = I.
\]

Consequently (where \(I_3\) is the identity matrix of size three), 
\[
	\psi(\mathbb{\mathfrak{w}}) = \begin{cases}
		I_3 & \textrm{ if }\mathbb{\mathfrak{w}}\textrm{ is the empty word,} \\
		Q_{\mathfrak{a}} \phi(\mathbb{\mathfrak{w}}) P & \textrm{ if }\mathbb{\mathfrak{w}}\textrm{ starts with an `}a\text{',} \\
		Q_{\mathfrak{b}} \phi(\mathbb{\mathfrak{w}}) P & \textrm{ otherwise.} \\
	\end{cases}
\]
Then,
\begin{align*}
	g(\mathbb{\mathfrak{w}}) & = \begin{pmatrix}
		2 & 3 & 5
	\end{pmatrix}
	\psi(\mathbb{\mathfrak{w}})
	\begin{pmatrix}
		0 \\ 0 \\ 1
	\end{pmatrix} \\
		& = \begin{pmatrix}
		1 & 1
		\end{pmatrix}
		\phi(\mathbb{\mathfrak{w}})
		\begin{pmatrix}
			2 \\ 3
		\end{pmatrix} \\
	& = \begin{pmatrix}
		1 & 0
	\end{pmatrix}
	\mathfrak{A} \phi(\mathbb{\mathfrak{w}}) \mathfrak{B}
	\begin{pmatrix}
		0 \\ 1
	\end{pmatrix} \\
	& = \begin{pmatrix}
		1 & 0
	\end{pmatrix}
	\phi(\mathbb{\mathfrak{a}\mathfrak{w}}\mathfrak{b}) 
	\begin{pmatrix}
		0 \\ 1
	\end{pmatrix}.
\end{align*}
So \(g(\mathbb{\mathfrak{w}})\) is just the top-right entry of the Cohn matrix associated to the word \(\mathfrak{a} \mathbb{\mathfrak{w}} \mathfrak{b}\).

By the previous section, it implies that \(g(\mathbb{\mathfrak{w}}) = m(G(\mathfrak{a} \mathbb{\mathfrak{w}} \mathfrak{b})) = m(aG(\mathbb{\mathfrak{w}})ab)\). Using our machinery, Proposition 10.14 in~\cite{Aig15} is a direct corollary of Lemma~\ref{lem:ab}:
\[
	g(\overline{\mathbb{\mathfrak{w}}}) = m\left(aG(\overline{\mathbb{\mathfrak{w}}})ab\right) = m\left(a(a\overline{G(\mathbb{\mathfrak{w}})}a^{-1})ab\right) 
	= m\left(a G(\mathbb{\mathfrak{w}}) ab\right)=g(\mathbb{\mathfrak{w}}) .
\]
Moreover the fact that among the words with \(k\) `\(\mathfrak{a}\)' and \(l\) `\(\mathfrak{b}\)' \(g\) is maximal for \(\mathfrak{a}^k\mathfrak{b}^l\) and \(\mathfrak{b}^l\mathfrak{a}^k\) follows from the two inequalities (with \(k_1,k_2,l_1,l_2 \geq 1\))
\begin{align*}
	g(\mathfrak{a}^{k_1}\mathfrak{b}^{l_1}\mathfrak{a}^{k_2}\mathbb{\mathfrak{w}}) & = m(aa^{k_1}(ab)^{l_1}a^{k_2}G(\mathbb{\mathfrak{w}})ab) \\
	& < m(aa(ba)^{l_1}a^{k_1-1}a^{k_2}G(\mathbb{\mathfrak{w}})ab) = g(\mathfrak{b}^{l_1}\mathfrak{a}^{k_1+k_2}\mathbb{\mathfrak{w}})
\end{align*}
where we flipped the factor \(a^{k_1-1}(ab)^{l_1}\) (point 3 of Lemma~\ref{lem:compositeFlip}) and
\begin{align*}
	g(\mathfrak{b}^{l_1}\mathfrak{a}^{k_1}\mathfrak{b}^{l_2}\mathbb{\mathfrak{w}}) & = m(a(ab)^{l_1}a^{k_1}(ab)^{l_2}G(\mathbb{\mathfrak{w}})ab) \\
	& < m(aaa^{k_1}(ba)^{l_1-1}b(ab)^{l_2}G(\mathbb{\mathfrak{w}})ab) = g(\mathfrak{a}^{k_1}\mathfrak{b}^{l_1+l_2}\mathbb{\mathfrak{w}})
\end{align*}
where we flipped the factor \(b(ab)^{l_1-1}a^{k_1}\) (point 1 of Lemma~\ref{lem:compositeFlip}).

On the other direction, we could be interested in words with \(k\) `\(\mathfrak{a}\)' and \(l\) `\(\mathfrak{b}\)' which minimize \(g\). So it is sufficient to find a word \(a\bw ab \) with \(k+l+2\) `a', \(l+1\) `b' and without `bb' which minimizes \(m\). By Proposition~\ref{prop:minimality}, we know this is the case for Christoffel words which proves Conjecture 10.16 in~\cite{Aig15}.

Unfortunately, possibilities 2,3, and 4 from page 223 in this book are not true. Indeed, the possibility 2 claims that the \(g\) measure decreases when the number of changes of letters increases. However,
\[
	g(\mathfrak{a}^2\mathfrak{b}\mathfrak{a}^3\mathfrak{b}^2\mathfrak{a}^3\mathfrak{b}\mathfrak{a}^3) = 210098378 < 210106196 = g\left(\mathfrak{a}^7(\mathfrak{ab})^4\right) 
\]
whereas the first word has \(6\) changes of letters and the second word \(7\). The possibility 3 would be a great strengthening of Frobenius' conjecture: it states that if \(g(\mathbb{\mathfrak{w}}_1) = g(\mathbb{\mathfrak{w}}_2) \), then \(\mathbb{\mathfrak{w}}_1 = \mathbb{\mathfrak{w}}_2\) or \(\mathbb{\mathfrak{w}}_1 = \overline{\mathbb{\mathfrak{w}}_2}\). But
\[
	g(\mathfrak{a}\mathfrak{b}\mathfrak{b}\mathfrak{a}) = 1130 = g(\mathfrak{b}\mathfrak{a}\mathfrak{a}\mathfrak{b}).
\]
Finally, the possibility 4 suggests that the ratio \(g\left(\mathfrak{a}^k\mathfrak{b}^l) / g(\mathfrak{a}^{-1}\textrm{ch}_{(l+1)/(k+l+2)}\mathfrak{b}^{-1}\right)\) is bounded near to \(1.1\). Computations show this is not true at all, for example for \(k=400\) and \(l=100\), the ratio is \(\approx 12\) and for \(k= 3000\) and \(l= 1000\) the ratio becomes \(\approx 8\cdot10^{10}\).

Similarly Conjectures 10.18 at page 225 can be falsified by computations:
\begin{align*}
	g(\mathfrak{a}^{46}\mathfrak{b}^{45}) > g(\mathfrak{a}^{-1}\textrm{ch}_{46/94}\mathfrak{b}^{-1}) & \textrm{ and }\mathfrak{a}^{-1}\textrm{ch}_{46/94}\mathfrak{b}^{-1}\textrm{ has }47\textrm{ `}\mathfrak{a}\textrm{' and }45\textrm{ `}\mathfrak{b}\textrm{',} \\
	g(\mathfrak{a}^{40}\mathfrak{b}^{37}) > g(\mathfrak{a}^{-1}\textrm{ch}_{39/79}\mathfrak{b}^{-1}) & \textrm{ and }\mathfrak{a}^{-1}\textrm{ch}_{39/79}\mathfrak{b}^{-1}\textrm{ has }39\textrm{ `}\mathfrak{a}\textrm{' and }38\textrm{ `}\mathfrak{b}\textrm{',} \\
	g(\mathfrak{a}^{8}\mathfrak{b}^{9}) > g(\mathfrak{a}^{-1}\textrm{ch}_{9/20}\mathfrak{b}^{-1}) & \textrm{ and }\mathfrak{a}^{-1}\textrm{ch}_{9/20}\mathfrak{b}^{-1}\textrm{ has }10\textrm{ `}\mathfrak{a}\textrm{' and }8\textrm{ `}\mathfrak{b}\textrm{'.}
\end{align*}

\end{document}